\definecolor{MyDarkblue}{rgb}{0,0.08,0.50}
\definecolor{Brickred}{rgb}{0.65,0.08,0}
\newtheorem{theorem}{Theorem}[section]
\newtheorem{lemma}[theorem]{Lemma}
\newtheorem{conjecture}[theorem]{Conjecture}
\newtheorem{proposition}[theorem]{Proposition}
\newtheorem{corollary}[theorem]{Corollary}
\newtheorem{definition}[theorem]{Definition}
\newtheorem{remark}[theorem]{Remark}
\newtheorem{claim}[theorem]{Claim}
\newcommand{\Pv}{\mathbb{P}}
\newcommand{\Ev}{\mathbb{E}}
\newcommand{\CC}{\mathcal{C}}
\newcommand{\bP}{\mathbb{P}}\newcommand{\bR}{\mathbb{R}}
\newcommand{\sss}{\scriptscriptstyle}
\newcommand{\CE}{{\mathcal{E}}}
\newcommand*{\SFPW}{{\mathrm{SFP}}_{W}}
\newcommand*{\SFPWL}{{\mathrm{SFP}}_{W,L}}
\newcommand{\e}{{\mathrm e}}
\numberwithin{equation}{section}
\newcommand{\R}{\mathbb{R}}
\newcommand{\N}{\mathbb{N}}
\newcommand{\Z}{\mathbb{Z}}
\renewcommand{\emptyset}{\varnothing}
\newcommand{\CA}{\mathcal {A}}
\newcommand{\CB}{\mathcal {B}}
\newcommand{\CF}{\mathcal {F}}
\newcommand{\CG}{\mathcal {G}}
\newcommand{\CN}{\mathcal {N}}
\newcommand{\CR}{\mathcal {R}}
\newcommand{\CS}{\mathcal {S}}
\newcommand{\CU}{\mathcal {U}}
\newcommand*{\wih}{\widehat}
\newcommand*{\la}{\lambda}
\newcommand*{\de}{\delta}
\newcommand*{\ve}{\varepsilon}
\newcommand*{\al}{\alpha}
\newcommand*{\ga}{\gamma}
\newcommand*{\be}{\begin{equation}}
\newcommand*{\ee}{\end{equation}}
\newcommand*{\ba}{\begin{aligned}}
\newcommand*{\ea}{\end{aligned}}
\newcommand*{\barr}{\begin{array}{c}}
\newcommand*{\earr}{\end{array}}
\def \toinp    {\buildrel {\Pv}\over{\longrightarrow}}
\def \toindis  {\buildrel {d}\over{\longrightarrow}}
\def \toas     {\buildrel {a.s.}\over{\longrightarrow}}
\newcommand*{\PoiBRW}{\mathrm{PoiBRW}}
\newcommand*{\BerBRW}{\mathrm{BerBRW}}
\newcommand*{\wit}{\widetilde}
\newcommand*{\ind}{\mathbbm{1}}
\def\namedlabel#1#2{\begingroup
    #2%
    \def\@currentlabel{#2}%
    \phantomsection\label{#1}\endgroup
}
\begin{document}
	\title[Explosion in scale-free percolation]{Explosion and distances in  scale-free percolation}

	\date{\today}
	\subjclass[2010]{Primary: 60C05, 05C80, 05C82, 90B15, 91D30.}
	\keywords{Spatial random graphs, explosion, weighted distances, scale free percolation}
	\author[R.v.d. Hofstad]{Remco van der Hofstad}
\author[J. Komj\'athy]{J\'ulia Komj\'athy}
	\address{Department of Mathematics and
	    Computer Science, Eindhoven University of Technology, P.O.\ Box 513,
	    5600 MB Eindhoven, The Netherlands.}
	\email{j.komjathy@tue.nl}
\thanks{Acknowledgements. This work is supported by the Netherlands Organisation for Scientific Research (NWO)
through VICI grant 639.033.806 (RvdH), VENI grant 639.031.447 (JK), the Gravitation Networks grant 024.002.003 (RvdH)}

\begin{abstract}
We study the weighted scale-free percolation ($\SFPWL$) model on $\Z^d$. The vertices of $\Z^d$ are assigned independent and identically distributed (i.i.d.) vertex-weights $(W_x)_{x\in \Z^d}$ from a power-law distribution with exponent $\tau>1$. Conditioned on the vertex-weights, the edges $(x,y)_{x, y \in \Z^d}$ are present independently with probability that a Poisson random variable with parameter $\lambda W_x W_y/(\|x-y\|)^\alpha$ is at least one, for some $\alpha, \lambda>0$, and where $\|\cdot \|$ denotes the Euclidean distance.
After the graph is constructed, we assign i.i.d. random edge-weights from distribution $L$ to all edges present. The focus of the paper is to determine when is the obtained model \emph{explosive}, i.e., when it is possible to reach infinitely many vertices in finite time from a vertex.
We show that explosion happens precisely for those edge-weight distributions that produce explosive branching processes with infinite mean power-law offspring distributions. For non-explosive edge-weight distributions, when $\gamma=\al (\tau-1)/d \in (1,2)$, we characterise the asymptotic behaviour of the time it takes to reach the first vertex that is graph distance $n$ away. For $\gamma>2$, we show that the number of vertices reachable by time $t$ from the origin grows at most exponentially, thus explosion is never possible. 

For non-explosive edge-weight distributions, when $\gamma \in (1,2)$, we determine the first order asymptotics of distances when $\gamma\in (1,2)$. As a corollary we obtain a sharp upper and lower bound for graph distances,  an open problem in \cite{DeiHof13} when $\gamma \in(1,2), \tau>2$.
Distances in the explosive setting turn out to be hard, due to the infinite number of vertices and non-compactness of $\Z^d$. This is in contrast with explosion in similar random graph models such as the configuration model, where typical distances converge to the sum of two explosion times \cite{BarHofKomdist}.  
\end{abstract}

\maketitle

\section{Introduction and results}

\subsection{Introduction} 
How long does it take for a video to go viral? Can we predict when the flu epidemic in the Netherlands will reach Japan?
These questions are emblematic for the study of this paper: spreading processes on spatial complex networks. Spreading processes on complex networks can be found in many aspects of our life. By understanding them, we can understand how infectious diseases spread, or which individuals should be targeted with advertisements to reach the most people at minimal cost. 
 Statistical analyses \cite{AlbBar02, AlbJeoBar99a, Newm03a} suggest that real-life networks tend to be \emph{scale-free}, i.e., their degree sequences follow a \emph{power law}. That is, $\Pv(\deg(v) > x) \approx x^{-\gamma}$, typically for some $\gamma\in(1,2)$, where $\deg(v)$ denotes the degree of vertex $v$. The  \emph{small world} property of acquaintance networks -- often referred to as six degrees of separation -- was popularised by the experiment of Milgram \cite{Milg67, TraMil69}. Generally, real-life networks are small worlds or even \emph{ultrasmall worlds.} That is, typical distances scale as a logarithm or even a double logarithm of the network size \cite{Bacetal12, Milg67, Mont2002, TraMil69}.  Real-life networks also tend to have high \emph{local clustering:} the proportion of triangles versus possible triangles tends to be rather large \cite{WatStr98}. 
 
 Many random graph models have been proposed so far to model real-life complex networks, each capturing some of the important features of these networks. Random graphs without geometry such as the configuration model \cite{Boll80, MolRee95}, variants of the Norros-Reitu model \cite{BJR07, ChuLu02a, NorRei06}, and the preferential attachment model \cite{BA99} mimic the scale-free and the small-world property well, but fail to produce an asymptotically positive \emph{clustering coefficient}. 
 To add the missing feature of positive clustering and to accommodate the rather natural spatial aspect of many real-life networks, spatial versions of the above models have been proposed. Scale-free random graph models with underlying geometry include spatial preferential attachment models (SPA)  \cite{AieBon08, FlaFriVer06, JacMor13}, hyperbolic random graphs \cite{BogPapKri10} and their generalisation, geometric inhomogeneous random graphs (GIRG) \cite{BriKeuLen15},  and the model that we study in this paper, \emph{scale free percolation} (SFP) \cite{DeiHof13}. These spatial models are equipped with an additional \emph{long-range} parameter $\al>0$ that describes how spread out the edges are in space. Let us mention briefly that there are other methods to incorporate clustering, e.g.\ by adding
local communities to an existing model, yielding hierarchical configuration models \cite{HofLeeSte15, SteHofLee16} or scale-free random intersection graphs \cite{BloKur16}. 

Once there is an underlying model that one can use to model real-life networks, the possibility to study the behaviour of information spread on networks opens up. Information spread is a general term that we use to cover a broad range of processes such as information diffusion, spreading of `activity', infection spread, etc., on different types of networks.
A way to model information spread is to allocate random edge-weights to all edges in the model from some underlying distribution. The length on an edge corresponds to the transmission time through that edge.  Distances in the newly obtained \emph{weighted random graph} then correspond to spreading time of information from one vertex to the other one. 

The behaviour of information spread is fairly well understood on random graph models without underlying geometry, such as the Erd{\H o}s-R\'enyi random graph, inhomogeneous random graphs \cite{BHH11, KolKom15}, or the configuration model with finite asymptotic variance of the degrees \cite{BarGes13, BHH10, BHH14, BhaHofKom14}. The infinite variance degree regime, that is, $\gamma \in (1,2)$,  seems to be more challenging. Recently with  Baroni  \cite{BarHofKomdist, BarHofKomtight} and Adriaans \cite{AdrKom17} we have determined the universality classes for weighted distances in the configuration model in this regime. 
Due to the novelty of spatial models, the theoretical study of information spread on them is rather limited: a related process, bootstrap percolation, is studied on hyperbolic random graphs and GIRGs \cite{CanFou16, KocLen16}. The behavior of random walk is studied on SFP in \cite{HeyHulJor17}.
Typical graph distances are studied for SFP in \cite{DeiHof13, DepWut15}, for GIRGs in \cite{BriKeuLen16} for some range of the parameters.

This paper is a step in studying information spread on spatial scale-free models. Our aim is to set up a program to analyse how the combination of the topology of the networks and the transmission time distribution affects the spreading time, and to identify \emph{universality classes}. We believe that the phenomena described in this paper are rather universal and similar results could be proven for other scale-free spatial graph models. 

\subsection{Our contribution}
In this paper we study information diffusion on the SFP model \cite{DeiHof13}. SFP is a random graph with vertex set $\Z^d$ that combines the Norros-Reitu model with \emph{long-range percolation}. 
After the construction of the graph, we allocate i.i.d.\ edge-weights to each existing edge.
We identify the (only) two universality classes of edge-weight distributions for information diffusion, that we call \emph{explosive} and \emph{conservative} class, respectively. 
We show that an edge-weight distribution $F_L(x)=\Pv(L\le x)$ with generalized inverse function $F_L^{(-1)}$ belongs the explosive class precisely when 
\[ \int_K^\infty F_L^{(-1)}\left(\exp\{-\e^x\}\mathrm dx\right)<\infty\] for some $K>0$, where $F_L^{(-1)}$ is the generalized inverse function of $F_L$. 
First we study the \emph{first passage profile} of the origin, i.e., the growth of the cluster of vertices available from the origin within distance $t$ as a function of $t$.
For the explosive class, when $\gamma\in(1,2)$, we show that there is a finite random time $V_0$, such that the number of vertices reachable from the origin within distance $V_0$ is infinite. We call this event \emph{explosion} and $V_0$ the \emph{explosion time} of the origin.
For the conservative class, the number of vertices reachable within any finite distance is a.s. finite, see Theorem \ref{thm:fpp-profile}. We show that explosion is never possible when $\gamma>2$, see Theorem \ref{thm:shape} and Corollary \ref{corr:non-expl}. For the conservative class, and when $\gamma \in (1,2)$, we further study typical (weighted) distances, extending the results on graph distances in \cite{DeiHof13}. For a unit length d-dimensional vector $\underline{e}$, let $\lfloor n \underline{e}\rfloor$ denote the vertex in $\Z^d$ obtained by coordinate-wise taking the integer part of the vector $n\underline e$.
We determine the leading order of the distance between $0$ and the vertex $\lfloor n \underline{e}\rfloor$  in terms of the edge-weight distribution, see Theorem \ref{thm:dist-cons}.
As a corollary (setting the edge-weights to be $1$ with probability $1$)  we obtain that for the graph distance
\[ \lim_{\|x\|\to \infty} \frac{\mathrm d_G(0,x)}{ \log \log \| x\|} \toinp \frac{2}{|\log (\gamma-1)|}, \] 
when $\gamma \in (1,2)$, where $\toinp$ denotes convergence in probability, and $\|x \|$ is the Euclidean norm of $x\in \Z^d$. Whether this convergence holds was an open question in \cite{DeiHof13} when a third parameter  $\tau$, that equals the power-law exponent of the vertex-weight distribution, satisfies $\tau>2$.

Our proofs require a more detailed analysis of the SFP without the edge-weights. An outcome of this analysis is that we are also able to identify the double exponential growth rate of the maximal displacement (that is, the distance of the farthest vertex graph distance $n$ away from the origin)  when $\gamma\in(1,2)$, see Theorem \ref{thm:max-displace}, a result that is interesting in its own right. When $\gamma>2$, we show that the growth rate of the maximal displacement is at most exponential, see Theorem  \ref{thm:shape}. 

Distances in the explosive class turned out to be surprisingly hard, due to the infinite number of vertices and non-compactness of $\Z^d$. This is in sharp contrast with explosion in non-spatial random graph models such as the configuration model, where typical distances converge to the sum of two explosion times \cite{BarHofKomdist, BHH10}. We have a lower bound on asymptotic distances in Theorem \ref{thm:distances} and we state a conjecture about a matching upper bound in Conjecture \ref{conj:distances}. 
Our theorems show that as long as $\gamma \in (1,2)$, the only \emph{relevant} parameter of the model in terms of weighted distances is the power law of the \emph{degree distribution}, $\gamma$. Neither the hidden parameter $\tau$ (coming from the Norros-Reitu model as the `fitness' of vertices) nor the long-range parameter $\al$ seem to play a role.  Similar phenomenon occurs in GIRGs, where the authors \cite{BriKeuLen16} observe that as long as the degrees have infinite variance, the only relevant parameter for typical distances is the power-law exponent of the degree distribution. For a more explicit connection between GIRGs and SFP, see Section \ref{s:discussion}.

\emph{New techniques.} Beyond studying information diffusion in a spatial random graph model with completely general edge-weight distributions, our paper improves existing techniques that are valuable in their own rights.
In Sections \ref{s:explore-BP} and \ref{s:bernoulli}, we describe and study a Branching Random Walk (BRW) in random 
environment that has both 
\emph{(1) infinite mean offspring}, and 
\emph{(2) infinite expected displacement}.
We show that, just like branching processes with infinite mean offspring, the size and maximal displacement of these BRWs grow double-exponentially. The results naturally carry through for the multi-type BRW with the same properties, i.e., when the environment is not fixed in advance but is re-shuffled every time the children of an individual are determined. 
Related work is \cite{BhaHazRoy17}, where the authors study a BRW with property (2). For critical BRWs, the one-arm exponent of BRWs with infinite expected displacement is studied in \cite{Huls15}. 

Another novel technique is the application of the idea of \emph{min-summability} that was invented in \cite{AmiDev13} for determining explosion in infinite-mean BRWs, in the random graph setting. The idea is that along a collection of paths with growing degrees $(d_i)_{i\ge m}$, one can estimate the shortest path by the sum of the typical minimal-edge-weights (that is, $\sum_{i=1}^m F_L^{(-1)}(1/d_i)$), and this estimate can be shown to be sharp when the degrees grow sufficiently fast. 

\emph{Notation.} We write r.v., lhs and rhs for random variable, left-hand side and right-hand side, respectively. For a sequence of random variables $(X_n)_{n \geq 1}$,  $X_n$ converges in probability to a r.v.\ $X$,  shortly $X_n  \toinp X$,  if for all $\ve > 0, \lim_{n \to \infty} \bP(|X_n-X| > \ve ) = 0$. Similarly,  $X_n$ converges in distribution/almost surely to $X$,  shortly $X_n  \toindis X, X_n \toas X$,  if  $\lim_{n \to \infty}\bP(X_n \le x) \to \bP(X \le x)  $ for all $x\in \R$ where $\bP(X \le x) $ is continuous/ $\lim_{n\to \infty} P(\lim_{n\to \infty} X_n=X)=1$.
 For a non-decreasing right-continuous function $F(x)$ the generalised inverse of $F$ is defined as $F^{(-1)}(x) := \inf\{y \in \bR : F(y) \geq x \}$.

\subsection{The model and results}\label{s:results}
In the scale-free percolation model, each vertex $x\in \Z^d$ is assigned an i.i.d.\ vertex-weight $W_x$ from a distribution $W$.
Let us fix the \emph{long-range parameter} $\al>0$ and the \emph{percolation parameter} $\la>0$. Conditioned on the collection of vertex-weights $(W_x)_{x\in \Z^d}$, there is an edge between any pair of non-nearest neighbor vertices $x,y \in \Z^d$ with probability
\be\label{eq:connect} \Pv_W((x,y) \text{ is present} \mid \|x-y\|> 1 ) = 1- \exp\Big\{  - \la \frac{W_x W_y}{\|x-y\|^\al}\Big\},\ee
where $\|x-y\|$ is the Euclidean distance between $x,y$, and $\Pv_W(\cdot):=\Pv(\cdot | (W_x)_{x\in \Z^d})$ is the conditional measure with respect to (wrt) the vertex-weights. Let us denote the resulting random graph by $\SFPW$. After this procedure is done, we assign each existing edge $e$ a random edge-weight, $L_{e}$, i.i.d.\ from some distribution $L$. Since our interests below are not affected by whether nearest neighbor edges are present in the graph, we assume that this is indeed the case, that is, 
\[\Pv((x,y) \text{ is present}\mid \|x-y\|=1) =1.\]
Let us denote the resulting weighted random graph by $\SFPWL$. Here, somewhat confusingly, ``weighted'' refers to the presence of edge-weights and not the vertex-weights $(W_x)_{x\in \Z^d}$. We shall consistently call the variables $L_e$ on edges \emph{edge-weights} and $W_x$ on vertices as \emph{vertex-weights}.

In this paper we are interested in the case when $W$ follows a power-law distribution with exponent $\tau>1$, that is, we assume that 
\be\label{eq:tail-W} \Pv(W>t) = \ell(t)/t^{\tau-1},\ee
for some function $\ell(t)$ that varies slowly at infinity, that is, $\lim_{t\to \infty}\ell(ct)/\ell(t)=1$ for all fixed $c>0$. We assume\footnote{This assumption is without loss of generality for any distribution $W$ with support separated away from $0$ by adjusting $\la$ accordingly.} throughout that that $\Pv(W\ge 1)=1$.
It is shown in \cite[Theorems 2.1, 2.2]{DeiHof13} that the degree distribution then follows a power-law distribution again, with tail exponent 
\be\label{def:gamma}\gamma:=(\tau-1)\al/d.\ee
 That is, for some function $\wit\ell$ varying slowly at infinity, as long as $\al>d$ and $\gamma:=\al(\tau-1)/d>1$,
\be\label{eq:degree-dist} \Pv(D_0>x) = \wit\ell(x)/x^\gamma,\ee
We  assume throughout the paper that $\al>d$ and $\gamma>1$ both hold. The case $\gamma>1$ includes the case where the tails in \eqref{eq:tail-W} are thinner than a power law.
We are interested in weighted distances, that is, for two vertices $x,y \in \Z^d$, let $\Gamma_{x,y}:=\{\pi: \pi \mbox{\ is a path from } x \mbox{ to }y \}$ be the set  of paths between $x,y \in \Z^d$. We define the \emph{$L$-distance} and the \emph{graph distance} as
\be\label{eq:L-distance-def} \mathrm d_L(x,y):=\inf_{\pi\in \Gamma_{x,y}} \sum_{e\in \pi} L_e, \qquad \mathrm d_G(x,y)=\inf\{|\pi|: \pi\in\Gamma_{x,y}\},\ee
where for a path $\pi$, $|\pi|$ denotes the number of edges on $\pi$. We shall write $|\pi|_L:=\sum_{e\in \pi} L_e$ for the $L$-length of a path. 
Let us define three important metric balls around a vertex $x\in \Z^d$, the ones for Euclidean distance, graph distance and weighted distance, respectively, as
\begin{align}\label{eq:eucledian} B^2_r(x)&:=\{y\in \Z^d: \|x-y\|\le r\}, \\
\label{eq:graph}B_r^G(x)&:=\{ y\in \Z^d: \mathrm d_G(x,y)\le r\}, \\
\label{eq:weighted} B^L_r(x)&:=\{y\in \Z^d: \mathrm d_L(x,y)\le r\}.
\end{align}
Note that while $B^2_r(x)$ is deterministic, $B_r^G(x), B_r^L(x)$ are random, depending on the realization of $\SFPWL$. 
Finally, for an integer $n$, we write $\Delta B_n^G(x):=B_n^G(x)\!\setminus\!B_{n-1}^G(x)$.
We shall consider the collection of sets $(B_t^L(x))_{t\in \R^+}$ as indexed by \emph{time} $t\in \R^+$. Note that the set $B_t^L(0)$ grows as $t$ increases. We call this collection the \emph{first passage profile} of $x$. To be able to analyse its characteristics, we introduce two hitting times. The first is  
\be\label{eq:tau-def} \tau_n(x):=\inf\{ t: |B^L_t(x)|=n\}, \ee
the first time when the $L$-metric ball contains $n$ vertices.
We write $\tau_n:=\tau_n(0).$
Next, 
\be \label{eq:mn-def}M_n(x):= \inf\{ t: B_t^{L}(x) \cap \Delta B^G_n(x) \neq \emptyset \}=\inf\{ d_L(x,y): \ y\in \Delta B^G_n(x)\}\ee
is the hitting time of $(B_{n-1}^G(x))^c:=\Z^d \setminus B_{n-1}^G(x)$.
Again, we write $M_n:=M_n(0)$.
In this paper, we write $F_X(\cdot)$ for the distribution function of a random variable $X$. Let
\be\label{def:inverse} F^{(-1)}_X(y):=\inf\{t\in \R: F_X(t)\ge y \}\ee 
the generalised inverse of $F_X$. With $F_L(x):=\Pv(L\le x)$, let us define
\be\label{eq:integral-crit} \mathbf{I}(L):=\int_0^\infty F_L^{(-1)}\Big(\exp\{-\e^{y}\}\Big) \mathrm dy. \ee
Our first theorem characterises the event of ``explosion'' for infinite-variance degrees: 
\begin{theorem}\label{thm:fpp-profile} 
Consider $\SFPWL$ with $W$ satisfying \eqref{eq:tail-W} with a power-law exponent $\tau>1$, $\al>d$ and $\gamma=\al(\tau-1)/d\in(1,2)$, and edge-weight distribution $L$. Let $x\in \Z^d$ be an arbitrary fixed vertex. 

(1) \emph{(Explosive part)}. When $\mathbf{I}(L)<\infty$, the hitting times $\tau_n(x)$ and $M_n(x)$ both converge almost surely, i.e.,
\be\label{eq:taun-limit} \tau_n(x) \toas V_x, \qquad M_n(x)\toas V_x, \ee
where $V_x<\infty$ a.s. We call the random variable $V_x$ the \emph{explosion time} of $x$.

 (2) \emph{(Conservative part)}. When $\mathbf{I}(L)=\infty$,  
\be\label{no-integral-nonexplosion}\tau_n(x)\toas \infty, \qquad M_n(x)\toas \infty.\ee
Further, in this case, 
\be\label{eq:non-explosive-mn}  \frac{M_n(x)}{ \sum_{k=1}^n F_L^{(-1)} \left( \exp\{- {1/(\gamma-1)}^{k} \}\right)} \toas 1. \ee

\end{theorem}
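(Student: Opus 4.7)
The plan is to compare the first-passage exploration from $x$ in $\SFPWL$ to a branching random walk (BRW) with infinite-mean offspring and i.i.d.\ $L$-edge-weights, as analysed in Sections \ref{s:explore-BP} and \ref{s:bernoulli}. The BRW arises naturally: when exploring a neighbour of a high-weight vertex, the neighbour's weight is size-biased by the edge probability \eqref{eq:connect}, and its degree is correspondingly much larger than typical. After the first generation, the resulting offspring distribution turns out to have a power-law tail of exponent strictly below $1$, hence \emph{infinite mean}. Theorem \ref{thm:max-displace} quantifies this: along the greedy ray that at each step follows the child of highest degree, the offspring size at generation $k$ satisfies $d_k \asymp \exp((\gamma-1)^{-k})$, double-exponentially in $k$.

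For the explosive part, I would apply the Amini-Devroye min-summability principle \cite{AmiDev13} to this BRW. Along the greedy ray, the edge-weights emanating from the current vertex are $d_k$ i.i.d.\ copies of $L$, and the minimum concentrates around the quantile $F_L^{(-1)}(1/d_k)$. Summing, the $L$-length of the greedy ray is concentrated around
\[ \sum_{k\ge 1} F_L^{(-1)}\big(1/d_k\big) \;\asymp\; \sum_{k\ge 1} F_L^{(-1)}\big(\exp(-(\gamma-1)^{-k})\big). \]
The substitution $y = k|\log(\gamma-1)|$ in the definition \eqref{eq:integral-crit} of $\mathbf{I}(L)$ matches this sum to $\mathbf{I}(L)/|\log(\gamma-1)|$ up to constants. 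Hence, when $\mathbf{I}(L)<\infty$, the greedy ray has finite $L$-length and the BRW's shortest-weight ball explodes by some finite random time $V^{\text{BRW}}_x$. Via a coupling that embeds the BRW into the local exploration of $\SFPWL$, the $L$-ball $B^L_t(x)$ contains infinitely many vertices by time $V_x \le V^{\text{BRW}}_x$; since $\tau_n(x), M_n(x)$ are non-decreasing in $n$ and bounded above by $V_x$, they both converge almost surely to $V_x$. The two limits coincide because, once $B^L_t(x)$ is infinite and yet of finite $L$-diameter, it must also intersect $\Delta B^G_n(x)$ for every sufficiently large $n$.

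For the conservative part, $\mathbf{I}(L)=\infty$ makes the greedy sum diverge, so the dominating BRW does not explode and $\tau_n(x), M_n(x)\to\infty$ almost surely. The sharp asymptotic \eqref{eq:non-explosive-mn} is obtained by matching upper and lower bounds on $M_n(x)$: for the lower bound, every path from $x$ to $\Delta B^G_n(x)$ traverses $n$ edges, and the level-$k$ edge of the minimising path has $L$-weight concentrated around $F_L^{(-1)}(1/d_k)$ by an extreme-value argument over the $d_k$ candidates at that level; for the upper bound, the greedy ray in the coupled BRW furnishes an explicit path realising this sum. Double-exponential growth of $d_k$ ensures that the summands vary on a geometric time-scale, allowing a Ces\`aro-type averaging that pins the ratio to $1$.

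The chief obstacle is the spatial structure of $\Z^d$: unlike in the configuration model, vertices are not exchangeable, edge probabilities depend on Euclidean distance via \eqref{eq:connect}, and the high-degree child along the greedy ray may lie far from its parent. Verifying that such children actually exist at the correct spatial locations in $\SFPWL$, that the dependence between generations introduced by weight re-use is negligible, and that the coupling can be set up both as an upper and as a lower bound, are the main spatial challenges; these are precisely what Sections \ref{s:explore-BP} and \ref{s:bernoulli} are designed to handle, and the present proof combines those estimates with the min-summability criterion.
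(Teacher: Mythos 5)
Your conservative lower bound is essentially the paper's argument: couple the exploration to the dominating $\BerBRW$ of Theorem \ref{thm:thinned-BRW}, bound its generation sizes double-exponentially (Proposition \ref{prop:ber-gensizes}), and lower-bound $M_n$ by the sum of generation-wise minima of i.i.d.\ edge-weights. The gap is in the other direction. The coupling constructed in Sections \ref{s:explore-BP} and \ref{s:explore-proof} gives $\CB_t^{L,\mathrm{S}}(0)\subseteq \CB_t^{L,\mathrm{B}}(0)$: the BRW \emph{dominates} the SFP exploration, because the SFP cluster is obtained from the BRW by thinning every return to an already-visited location. Consequently, explosion of the BRW, or a short greedy ray inside the BRW, says nothing about $\SFPWL$ --- the ray you exhibit may consist largely of thinned individuals, and a path in the dominating BRW is not a path in the graph. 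Your explosive part and your conservative upper bound both rest on ``a coupling that embeds the BRW into the local exploration of $\SFPWL$'' as a \emph{lower} bound; no such coupling exists in the paper, and constructing one would require controlling exactly the spatial collisions that make the problem hard.

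What the paper does instead for those two directions is a genuinely different idea that your proposal is missing: the boxing construction of Section \ref{s:prelim}. One surrounds $x$ by annuli $\Gamma_n$ of radius $\exp\{DC^n\}$ with $C=(1-\ve)/(\gamma-1)$, partitions each into $b_n\asymp\exp\{d(D-1)C^n\}$ subboxes, and takes as ``centers'' the maximal-weight vertex of each subbox. Lemma \ref{lem:layers-complete} shows that almost surely, for all large $n$, the centers of consecutive annuli are pairwise connected \emph{in the actual graph}, and --- crucially --- since the centers are determined by vertex-weights alone, the $b_{k+1}$ edges from the current center into the next annulus carry genuinely i.i.d.\ copies of $L$; the greedy path therefore has length $\sum_k\min\{L_{k,1},\dots,L_{k,b_{k+1}}\}$, to which min-summability applies. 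This furnishes the explicit infinite path of finite $L$-length needed for the explosive part, and the matching upper bound on $M_n(x)$ in the conservative part. Without this (or an equivalent explicit sub-structure of $\SFPWL$), your argument yields only one of the two bounds required for \eqref{eq:non-explosive-mn} and does not prove explosion at all.
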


We comment that the (possibly infinite) almost sure limits of $\tau_n(x)$ and $M_n(x)$ are always equal for any graph with almost surely bounded degrees, thus the message of this theorem is that this limit is finite if and only if the integrability condition \eqref{eq:integral-crit} holds, as well as quantifying the rate of growth of $M_n(x)$ in the conservative case.
There is a deep connection to \emph{age-dependent branching processes} (BP). In an age-dependent BP, individuals have i.i.d.\ life lengths, and upon death they produce an i.i.d.\ number of offspring. An age-dependent BP is called explosive if it produces infinitely many individuals within finite time with positive probability, see \cite{Grey74, Sev67}. 
 The criterion in \eqref{eq:integral-crit} is the same criterion that is needed for an BP with offspring distribution given in \eqref{eq:degree-dist} but with $\gamma \in (0,1)$ and birth-time distribution $L$ to be explosive. This statement can be found in \cite[Lemma 5.8, Theorem 6.1]{Komexp} and it first appeared in some form in \cite{AmiDev13}. As a corollary we obtain:

\begin{corollary}
Let $\mathcal W(\mathrm{SFP}):=\{ L : \SFPWL \text{ explosive}\}$  be the set of distributions that produce explosive first passage profiles for the origin in $\SFPWL$ when $\gamma\in (1,2)$. Similarly, let $\mathcal W(\mathrm{BP})$ be the set of life-length distributions that produce explosive age-dependent BPs with power-law offspring distribution for some $\gamma\in(0,1)$. Then $\mathcal W(\mathrm{SFP}) = \mathcal W(\mathrm{BP})$.
\end{corollary}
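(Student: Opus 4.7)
The proof is a direct consequence of Theorem \ref{thm:fpp-profile} combined with the known explosion criterion for infinite-mean age-dependent branching processes. My plan is therefore a short two-step reduction, with the substantive content already packaged in Theorem \ref{thm:fpp-profile}.

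First, I would extract from part (1) of Theorem \ref{thm:fpp-profile} the explicit characterisation
\[
L \in \mathcal W(\mathrm{SFP}) \iff \mathbf{I}(L) < \infty.
\]
Indeed, part (1) asserts that $\mathbf I(L)<\infty$ implies $\tau_n(x)\to V_x<\infty$ a.s., i.e.\ explosion, while part (2) asserts that $\mathbf I(L)=\infty$ implies $\tau_n(x)\to\infty$ a.s., so the first passage profile reaches only finitely many vertices in any finite time window and explosion fails. Since this equivalence does not depend on the particular value of $\gamma\in(1,2)$, the set $\mathcal W(\mathrm{SFP})$ is unambiguously defined by the single integrability criterion $\mathbf{I}(L)<\infty$.

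Second, I would invoke the corresponding characterisation for age-dependent branching processes. By \cite[Lemma 5.8, Theorem 6.1]{Komexp} (originally \cite{AmiDev13}), an age-dependent branching process with i.i.d.\ life-length distribution $L$ and offspring distribution having power-law tail with exponent $\gamma\in(0,1)$ (as in \eqref{eq:degree-dist} with the exponent shifted) is explosive if and only if $\mathbf{I}(L)<\infty$. Again, this condition does not depend on the specific value of $\gamma\in(0,1)$, so $\mathcal W(\mathrm{BP})$ is likewise characterised purely by $\mathbf I(L)<\infty$.

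Comparing the two characterisations yields $\mathcal W(\mathrm{SFP}) = \mathcal W(\mathrm{BP})$, proving the corollary. There is essentially no obstacle at the level of the corollary itself: all the work is concealed in Theorem \ref{thm:fpp-profile} (to be proved separately) and in the cited branching process results. The only minor point worth flagging in a rigorous write-up is the verification that the condition \eqref{eq:integral-crit} used here agrees with the one stated in \cite{Komexp, AmiDev13} (up to the irrelevant choice of the lower limit of integration), but this is an elementary rewriting of the integral using the monotonicity of $F_L^{(-1)}$ and a change of variables $y \mapsto y+K$.
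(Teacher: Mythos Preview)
Your proposal is correct and matches the paper's own approach exactly: the paper does not give a separate proof of this corollary but simply presents it as an immediate consequence of Theorem \ref{thm:fpp-profile} together with the explosion criterion for infinite-mean age-dependent branching processes from \cite[Lemma 5.8, Theorem 6.1]{Komexp} and \cite{AmiDev13}. Your write-up merely makes explicit the two characterisations via $\mathbf{I}(L)<\infty$ that the paper leaves implicit in the sentence preceding the corollary.
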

For the conservative class, we characterise the first order of $L$-distances (see \eqref{eq:L-distance-def}) in $\SFPWL$:
\begin{theorem}[Distances in the conservative case]\label{thm:dist-cons}
Let us consider the same model as in Theorem \ref{thm:fpp-profile}, with $\mathbf{I}(L)=\infty$. Fix an arbitrary unit vector $\underline e$.  
Then, as $m\to \infty$,
\be\label{eq:dist-cons-1111}  \mathrm d_L(0,\lfloor m \underline e \rfloor)/ \left( \sum_{i=1}^{\lfloor \log \log m/|\log (\gamma-1)|\rfloor}2 F_L^{(-1)}\big( \exp\{-1/(\gamma-1)^k\} \big) \right)\toinp 1.\ee 
\end{theorem}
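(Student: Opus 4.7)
Set $n := \lfloor \log\log m/|\log(\gamma-1)|\rfloor$. By Theorem~\ref{thm:fpp-profile}(2), the denominator in \eqref{eq:dist-cons-1111} equals $2M_n(1+\op(1))$, so it suffices to prove $d_L(0,\lfloor m\underline{e}\rfloor)/(2M_n) \toinp 1$. The plan is to combine a two-sided greedy first-passage exploration with Theorem~\ref{thm:max-displace}: the greedy exploration yields the matching upper bound, while the maximal-displacement estimate rules out short paths for the lower bound.

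\textbf{Lower bound.} Suppose, toward a contradiction, that $d_L(0,\lfloor m\underline{e}\rfloor) < 2M_n(1-\varepsilon)$. Then $B_{M_n(1-\varepsilon)}^L(0)$ and $B_{M_n(1-\varepsilon)}^L(\lfloor m\underline{e}\rfloor)$ must share a vertex. By the very definition \eqref{eq:mn-def} of $M_n$, each of these $L$-balls is contained in the graph ball $B_{n-1}^G(\cdot)$ around its centre, because the $L$-ball cannot hit graph distance $n$ before time $M_n$. However, by Theorem~\ref{thm:max-displace}, every vertex of $B_{n-1}^G(0)$ lies within Euclidean distance $\exp(\exp((n-1)|\log(\gamma-1)|(1+o(1))/2)) = \exp(O(\sqrt{\log m})) = o(m)$ from $0$ with high probability, so $B_{n-1}^G(0)$ and $B_{n-1}^G(\lfloor m\underline{e}\rfloor)$ lie in disjoint Euclidean balls. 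This yields the required contradiction.

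\textbf{Upper bound.} Fix a large constant $C$. Following the greedy construction in the proof of Theorem~\ref{thm:fpp-profile}(2), from $0$ we reach with $L$-cost at most $(1+\varepsilon)M_{n+C}$ a vertex $v_0$ at graph distance $n+C$ whose weight satisfies $W_{v_0} \geq m^{K(C)}$ with $K(C) = (\gamma-1)^{-C}\log W \to \infty$ as $C \to \infty$; indeed, weights along the greedy path grow doubly exponentially as $W^{(\gamma-1)^{-k}}$ at generation $k$. Because the non-increasing sequence $a_k := F_L^{(-1)}(\exp\{-(\gamma-1)^{-k}\})$ has divergent partial sums in the conservative case, $a_n = o(M_n)$, so $M_{n+C} - M_n \leq C a_n = o(M_n)$. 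An analogous exploration from $\lfloor m\underline{e}\rfloor$ yields $v_1$. Choosing $C$ so that $2K(C) > \alpha$, the connection probability \eqref{eq:connect} between $v_0$ and $v_1$ is at least $1 - \exp(-\lambda m^{2K(C)-\alpha}) \to 1$, while the $L$-weight of the connecting edge is $\op(M_n)$ by selecting the minimum over a large family of candidate pairs produced by the two explorations.

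The main obstacle lies in the upper bound: one must (i) verify that the greedy exploration produces, with high probability, a vertex of graph distance $n+C$ and weight at least $m^{K(C)}$, and (ii) control the $L$-weight of the connecting edge. Both steps build on the infinite-mean branching random walk analysis of Sections~\ref{s:explore-BP}--\ref{s:bernoulli} and the min-summability techniques adapted from \cite{AmiDev13}.
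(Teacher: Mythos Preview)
Your lower bound contains a fatal miscalculation. You claim that by Theorem~\ref{thm:max-displace} the ball $B^G_{n-1}(0)$ lies within Euclidean distance $\exp(O(\sqrt{\log m}))$ of the origin. But the upper bound in \eqref{eq:max-displacement} reads $D^{\max}_{n-1}(0)\le \exp\{Y(\ve)((1+\ve)/(\gamma-1))^{n-1}\}$, and with $n=\lfloor\log\log m/|\log(\gamma-1)|\rfloor$ one has $((1+\ve)/(\gamma-1))^{n-1}=(\log m)^{1+\eta}$ for some $\eta=\eta(\ve)>0$, not $\sqrt{\log m}$. Hence the Euclidean radius of $B^G_{n-1}(0)$ can be as large as $\exp\{Y(\ve)(\log m)^{1+\eta}\}$, which grows faster than any power of $m$; the two graph balls $B^G_{n-1}(0)$ and $B^G_{n-1}(\lfloor m\underline e\rfloor)$ need not be Euclidean-separated, and the contradiction collapses. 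You cannot repair this by citing the graph-distance lower bound from \cite{DeiHof13}: when $\tau>2$ that bound uses $\kappa=\alpha/d-1<\gamma-1$, giving a constant strictly smaller than $2/|\log(\gamma-1)|$; nor can you invoke Corollary~\ref{cor:missing-distance}, which is a consequence of the very theorem you are proving.

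The paper's lower bound works instead with a \emph{smaller} number of generations $N_x\approx \log\log m/\log((1+\ve)/(\gamma-1))<n$, chosen precisely so that the dominating $\BerBRW$ (Proposition~\ref{prop:ber-gensizes}) is guaranteed to stay inside a box of side $\|x\|/2$. Two conditionally independent $\BerBRW$ explorations from $0$ and from $x$ then give $d_L(0,x)\ge M^{\mathrm B}_{N_x(0)}(0)+M^{\mathrm B}_{N_x(x)}(x)$, and a change-of-variables argument converts the sum over $N_x<n$ generations with base $(1+\ve)/(\gamma-1)$ into the target sum over $n$ generations with base $1/(\gamma-1)$, at the cost of a factor $1-\ve'$.

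Your upper bound has the mirror-image problem. The greedy path of Lemma~\ref{lem:layers-complete} uses boxing rate $C(\ve)=(1-\ve)/(\gamma-1)$, so after $n+C$ steps the center reached has weight of order $\exp\{((1-\ve)/(\gamma-1))^{n+C}\}=\exp\{O((\log m)^{1-\eta'})\}$, which is sub-polynomial in $m$ --- not $m^{K(C)}$ as you assert. To reach a vertex of polynomial weight one must take $n(x)\approx \log\log m/\log C(\ve)>n$ greedy steps; the paper does exactly this via the merging greedy paths of Corollary~\ref{corr:merging}, and the same change of variables shows the $L$-cost of those $n(x)$ steps is still $(1+o(1))$ times the target sum.
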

Setting $L\equiv 1$ yields typical \emph{graph distances}, an open question in \cite{DeiHof13} when $\gamma\in(1,2)$ but $\tau>2$:
\begin{corollary}\label{cor:missing-distance}
Consider the scale-free percolation model with $\al>d$, $\gamma\in (1,2)$. Then
\be \label{eq:typical-dist}\lim_{\|x\|\to \infty} \frac{\mathrm d_G(0,x)}{ \log \log \| x\|} \toinp \frac{2}{|\log (\gamma-1)|}. \ee
\end{corollary}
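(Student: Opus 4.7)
The plan is to derive Corollary~\ref{cor:missing-distance} as the degenerate special case of Theorem~\ref{thm:dist-cons} with the deterministic edge-weight distribution $L\equiv 1$. Three elementary observations make this reduction work: (i) with unit edge-weights the weighted distance $d_L$ in \eqref{eq:L-distance-def} reduces to the graph distance $d_G$; (ii) the distribution function $F_L(x)=\mathbf{1}\{x\ge 1\}$ has generalised inverse $F_L^{(-1)}(y)=1$ for every $y\in(0,1]$; and (iii) consequently $\mathbf{I}(L)=\int_0^\infty 1\,dy=+\infty$, placing $L\equiv 1$ in the conservative regime to which Theorem~\ref{thm:dist-cons} applies.

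\textbf{Collapse of the normaliser.} Substituting $F_L^{(-1)}\equiv 1$ into the right-hand side of \eqref{eq:dist-cons-1111} and using that $\exp\{-1/(\gamma-1)^i\}\in(0,1)$ for every $i\ge 1$, one finds that each summand equals $2$, so that
$$\sum_{i=1}^{\lfloor \log\log m/|\log(\gamma-1)|\rfloor} 2\,F_L^{(-1)}\!\left(\exp\{-1/(\gamma-1)^i\}\right) \;=\; 2\bigl\lfloor \log\log m/|\log(\gamma-1)|\bigr\rfloor \;\sim\; \frac{2\log\log m}{|\log(\gamma-1)|},$$
where we use $\gamma-1\in(0,1)$ so that $|\log(\gamma-1)|>0$. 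Theorem~\ref{thm:dist-cons} then reads, for each fixed unit vector $\underline{e}\in S^{d-1}$,
$$\frac{d_G(0, \lfloor m\underline{e}\rfloor)}{\log\log m} \;\toinp\; \frac{2}{|\log(\gamma-1)|},\qquad m\to\infty.$$

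\textbf{Uniformity in the direction.} For $x\in\Z^d\setminus\{0\}$ one has $\|x\|\cdot(x/\|x\|)=x$ on the nose, so taking $m=\|x\|$ and $\underline{e}=x/\|x\|$ makes $\lfloor m\underline{e}\rfloor=x$, and the previous display already gives \eqref{eq:typical-dist} as soon as the fixed-direction convergence is uniform in $\underline{e}\in S^{d-1}$. I would establish this uniformity by a $\delta$-net argument: fix $\delta>0$, cover $S^{d-1}$ by a finite $\delta$-net of directions $\{\underline{e}_1,\ldots,\underline{e}_N\}$, apply the fixed-direction statement simultaneously to the net via a union bound, and then for arbitrary $x$ compare $d_G(0,x)$ with $d_G(0,\lfloor\|x\|\underline{e}_{j(x)}\rfloor)$ for the nearest net-direction $\underline{e}_{j(x)}$ using translation invariance of $\SFPW$.

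\textbf{Main obstacle.} The only genuinely non-routine point in the last step is that a naive triangle-inequality comparison introduces a perturbation of the same order $\log\log\|x\|$ as the leading term, which would produce a wrong constant. To avoid this factor-of-two loss, one has to exploit that the upper-bound path construction inside the proof of Theorem~\ref{thm:dist-cons} actually connects the origin to an entire macroscopic region rather than to a single endpoint, so that the same short path serves all sufficiently nearby targets; alternatively, one can invoke a shape-type statement combined with Theorem~\ref{thm:max-displace} to eliminate direction-dependent corrections. Beyond this routine uniformity issue, the corollary contains no new mathematical content: all the substantive work already sits in Theorem~\ref{thm:dist-cons}.
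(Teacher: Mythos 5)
Your reduction is exactly the paper's proof: the corollary is presented as the specialisation $L\equiv 1$ of Theorem \ref{thm:dist-cons}, and your observations that $F_L^{(-1)}\equiv 1$, that $\mathbf{I}(L)=\infty$ (so the conservative case applies), and that the normaliser collapses to $2\lfloor\log\log m/|\log(\gamma-1)|\rfloor\sim 2\log\log m/|\log(\gamma-1)|$ are all there is to it. The uniformity-in-direction issue you raise concerns only the phrasing of Theorem \ref{thm:dist-cons}, not its proof: the constructions in Section \ref{s:gamma12} (the merging annuli, the index $n(x)$ in \eqref{def:nx}, the disjoint boxes $\mathrm{B}'(0),\mathrm{B}'(x)$) and all the error probabilities depend on $x$ only through $\|x\|$, so the convergence already holds for arbitrary $x$ with $\|x\|\to\infty$ and no $\delta$-net is needed — which is fortunate, since, as you yourself note, a triangle-inequality comparison to a net direction would perturb the distance by a term of the same order $\log\log\|x\|$ as the quantity being estimated.
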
 
Related to this result, we study the maximal displacement at graph distance $n$ from $0$. We believe this result is  of independent interest, so we state it as a separate theorem:
\begin{theorem}\label{thm:max-displace}
Let $D^{\max}_n(0):=\max\{ \|y\|: y\in \Delta B^G_n(0)\} $ be the maximal displacement in $n$ steps in $\SFPW$ with $\al>d, \gamma\in (1,2)$. Then for all $\ve$, for some random variables $Z(\ve)\le 2$ and $Y(\ve)$, and almost surely for all $n\ge 1$,
\be\label{eq:max-displacement} \exp\left\{ Z(\ve)\Big(\frac{ 1-\ve}{\gamma-1}\Big)^n\right\} \le
  D^{\max}_n(0) \le \exp\left\{Y(\ve) \Big(\frac{1+\ve}{\gamma-1}\Big)^n\right\}.  \ee  
In other words, almost surely, 
\be\label{eq:embed-12} B_n^{G}(0) \subset B^2_{\exp\{ Y(\ve) (1+\ve)^n/(\gamma-1)^n\}}(0). \ee
Further, $Y(\ve)$ has exponentially decaying tails. For some strictly positive constants $c=c(d,\al, \tau), C_\ve$, 
\be\ba \label{eq:Y-tail}\Pv(Y(\ve)\ge K) &\le C_{\ve} \exp\left\{ -  c\cdot  \ve \cdot K \right\}
	\ea \ee

\end{theorem}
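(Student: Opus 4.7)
\emph{Lower bound.} The plan is to build a greedy graph-path $0=v_0,v_1,\ldots$ in $\SFPW$ so that $\|v_k\|$ grows doubly-exponentially at rate $(1-\varepsilon)/(\gamma-1)$. Given $v_{k-1}$ with weight $W_{v_{k-1}}\ge w_{k-1}$, I pick target exponents $\eta_w,\eta_r>0$ satisfying (a) $d\eta_r>(\tau-1)\eta_w$ and (b) $\alpha\eta_r-\eta_w\le 1$, which, since $\gamma>1$, admit $\eta_w$ arbitrarily close to $1/(\gamma-1)$ and $\eta_r$ close to $(\tau-1)/(d(\gamma-1))$. I then search for $v_k$ in the annulus $B^2_{2r_k}(v_{k-1})\setminus B^2_{r_k}(v_{k-1})$ with $r_k:=w_{k-1}^{\eta_r}$ satisfying $W_{v_k}\ge w_k:=w_{k-1}^{\eta_w}$ and $v_k$ connected to $v_{k-1}$ in $\SFPW$. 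Condition (a) makes the expected number of candidate vertices $\asymp r_k^d w_k^{-(\tau-1)}=w_{k-1}^{d\eta_r-(\tau-1)\eta_w}$ diverge with $w_{k-1}$, so a Chebyshev estimate on the sum of Bernoullis produces many candidates w.h.p. Condition (b) makes each candidate independently connected to $v_{k-1}$ with probability $\ge 1-\exp(-\lambda w_{k-1}w_k/(2r_k)^\alpha)\ge 1-\exp(-c)$, a positive constant. A Borel--Cantelli argument over $k$ produces the path almost surely, and $\|v_n\|\ge r_n/2$ yields the claimed lower bound with a random $Z(\varepsilon)\le 2$ determined by the initial weight $W_0$.

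\emph{Upper bound and tail on $Y(\varepsilon)$.} The plan is to dominate the BFS exploration from $0$ in $\SFPW$ by the infinite-mean, infinite-displacement Branching Random Walk in random environment of Sections \ref{s:explore-BP}--\ref{s:bernoulli}, whose offspring tail $\gamma$ matches \eqref{eq:degree-dist} and whose displacement distribution is inherited from the spatial edge kernel \eqref{eq:connect}. The key per-step estimate I prove is that from $v$ with $W_v=w$, both (i) the probability of an edge from $v$ of Euclidean length $>R$, and (ii) the probability of an edge from $v$ to a vertex of weight $>t$, are bounded by an explicit decreasing power of $R$ (respectively $t$) times a modest power of $w$; these follow from Tauberian asymptotics for $E[1-e^{-\beta W}]$ (giving the $\beta^{\tau-1}$ rate for $\tau\in(1,2)$ and the $\beta E[W]$ rate for $\tau>2$) together with integration against $\|u-v\|^{-\alpha}$. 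Iterating these bounds jointly on $R_n$ and on $W^*_n:=\max\{W_v:v\in B^G_n(0)\}$ within the BRW shows both grow doubly-exponentially at rate $(1+\varepsilon)/(\gamma-1)$, giving \eqref{eq:embed-12}. For the tail, with $R_n(K)=\exp\{K((1+\varepsilon)/(\gamma-1))^n\}$ the one-step failure probability conditional on $Y=K$ is of order $\exp\{-c\varepsilon K((1+\varepsilon)/(\gamma-1))^{n-1}\}$; summing over $n\ge 1$ is dominated by the $n=1$ term, yielding $\P(Y(\varepsilon)>K)\le C_\varepsilon\exp(-c\varepsilon K)$, matching \eqref{eq:Y-tail}.

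\emph{Main obstacle.} The chief difficulty in the upper bound is that a naive union bound over $v\in B^2_{R_{n-1}}(0)$ overshoots: the Euclidean-ball max-weight $\asymp R_{n-1}^{d/(\tau-1)}$ can be much larger than the true BFS-reachable max-weight $W^*_{n-1}$, and using the former breaks the target $1/(\gamma-1)$ rate (producing the weaker $d/((\tau-1)(\alpha-d))$ for $\tau>2$, for instance). The BRW coupling is precisely what restricts the induction to truly reachable vertices, and running it jointly on the pair $(R_n,W^*_n)$ is the technical core. A secondary issue is that when $\tau\in(1,2)$ the vertex-weights have infinite mean, so all the per-step estimates must be phrased via sub-power Laplace-transform asymptotics rather than ordinary first moments.
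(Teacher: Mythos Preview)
Your lower-bound construction is essentially the paper's greedy/boxing argument in a slightly more direct form; the parameter window $(\eta_w,\eta_r)$ you identify is exactly the one that makes the scheme work, and a Borel--Cantelli over $k$ closes it. One small point you should make explicit is the initialization: if $W_0$ is close to $1$ your recursion $w_k=w_{k-1}^{\eta_w}$ never leaves the ground, so you first need to walk (e.g.\ via nearest-neighbour edges) to a vertex whose weight exceeds some threshold and start the greedy step from there; this is what produces the random $Z(\ve)$ and it is handled in the paper via the random index $n_0(\ve)$ in Lemma~\ref{lem:layers-complete}.

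The upper bound, however, has a genuine gap. You propose to run the induction on the pair $(R_n,W^*_n)$ where $W^*_n=\max\{W_v:v\in\CG_n^{\mathrm B}\}$. This is not enough to recover the rate $(1+\ve)/(\gamma-1)$. Using only the maximal weight, the best you can say for the number of generation-$k$ children with weight $\ge u$ is
\[
\Ev\big[\#\{\text{children in }\CG_k^{\mathrm B}\text{ of weight}\ge u\}\big]\ \le\ Z_{k-1}^{\mathrm B}\cdot M\,(W^*_{k-1})^{d/\al}\,u^{-(\gamma-1)d/\al},
\]
and if you plug in the target scales $Z_{k-1}^{\mathrm B}\le m_{k-1}^{(1+\ve)d/\al}$, $W^*_{k-1}\le m_k$, $u=m_{k+1}$ with $m_k=m_{k-1}^{(1+\ve)/(\gamma-1)}$, the bound closes only when $\gamma-1<\ve$. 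The problem is that $(W^*_{k-1})^{d/\al}$ vastly overestimates the contribution of the \emph{typical} vertex in $\CG_{k-1}^{\mathrm B}$: most of the $Z_{k-1}^{\mathrm B}$ vertices have weight far below $W^*_{k-1}$, and each contributes only $W_v^{d/\al}$, not $(W^*_{k-1})^{d/\al}$. This is precisely why the paper's Lemma~\ref{lem:types} does \emph{not} track the scalar $W^*_{k-1}$ but the full profile $\big(|W(\CG_{k-1}^{\mathrm B})\cap I_k^j|\big)_{j\le n}$ across roughly $n\asymp 1/\ve$ weight windows $I_k^j=[m_k^{1-j/n},m_k^{1-(j-1)/n})$. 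The induction in \eqref{eq:children-multi-12} then sums over $h$ the contributions of parents in each window, and the geometric-series cancellation there is what recovers the sharp rate. Your two per-step estimates (i)--(ii) are the right ingredients (they are Claim~\ref{claim:expected-offspring}), but the induction must be run on the discretised weight profile, not on the pair $(R_n,W^*_n)$.
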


 \begin{remark}\normalfont It would be natural to ask for \emph{almost sure} convergence instead of convergence in probability in the statement of Theorem \ref{thm:dist-cons}. We believe that almost sure convergence is only valid along subsequences that grow sufficiently fast, e.g.\ along the subsequence $([\e^n \underline e])_{n\ge 1}$. The reason for this is that $Y(\ve)$, the random prefactor in the upper bound in \eqref{eq:max-displacement}, decays (only) exponentially. Thus in the sequence $\lfloor n\underline e\rfloor$, it might happen that for infinitely many $n$ the corresponding $Y(\ve)$ in the maximal displacement is of order $\log \log n$. This, in return, makes distances to be shorter than $1-\delta$ times the numerator in \eqref{eq:dist-minus-explosion}, so that the almost sure convergence is lost. We give an argument why our proof cannot be strengthened to a.s. convergence in the proof of Theorem \ref{thm:dist-cons}.
 \end{remark}

 Studying distances in the explosive case turns out to be harder. It is possible to show based on the proof of Theorem  \ref{thm:fpp-profile} that two vertices, arbitrarily far away, can be connected via paths with total length that is a.s.\ \emph{finite}, but we failed to show so far that they can be connected within the sum of the two explosion times.
 The next theorem states that far away vertices have asymptotically independent explosion times, and states the lower bound on the $L$-distance.
\begin{theorem}[Distances in the explosive case]\label{thm:distances}
Consider the same model as in Theorem \ref{thm:fpp-profile}, with $\mathbf{I}(L)<\infty$ in \eqref{eq:integral-crit}. Then, as $\| x\|\to \infty$,
\be\label{eq:indep} (V_0, V_x) \toindis (V^{\sss{(1)}}, V^{\sss{(2)}}),\ee
where $V^{\sss{(1)}}, V^{\sss{(2)}}$ are two \emph{independent} copies of the limiting random variable from \eqref{eq:taun-limit}.
Further, for all $\ve>0$
\be\label{eq:dist-minus-explosion} \lim_{\|x\|\to \infty}  \Pv(\mathrm d_L(0,x)-(V_0+ V_x)<  -\ve) = 0.\ee
\end{theorem}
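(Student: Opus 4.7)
Both statements rest on the same \emph{localisation}: $V_x$ can be approximated, up to arbitrarily small $\eta>0$, by a random variable measurable in a finite Euclidean box around $x$ whose radius is uniform in $x$ by translation invariance. Fix $\eta>0$, and by \eqref{eq:taun-limit} and translation invariance pick $n_0=n_0(\eta)$ with $\Pv(V_x-\tau_{n_0}(x)>\eta)<\eta$ for every $x\in\Z^d$; since the first $n_0$ vertices $v_1,\ldots,v_{n_0}$ reached in the $L$-Dijkstra exploration from $x$ lie almost surely in a bounded subset of $\Z^d$, pick $R_0=R_0(n_0,\eta)$ (uniform in $x$) with $\Pv(E_x)>1-\eta$, where $E_x:=\{v_1,\ldots,v_{n_0}\in B^2_{R_0}(x)\}$. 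On $E_x$, Dijkstra's rule that each $L$-shortest path from $x$ passes only through earlier-reached vertices forces Dijkstra restricted to the subgraph induced by $B^2_{R_0}(x)$ to return the same $v_1,\ldots,v_{n_0}$ with the same $L$-arrival times; hence $\tau_{n_0}(x)=\tau_{n_0}^{\sss(R_0)}(x)$, and the right-hand side is measurable in the $\sigma$-algebra $\mathcal F_{R_0}(x)$ generated by the vertex-weights, edge-indicators, and edge-weights inside $B^2_{R_0}(x)$. For $\|x\|>2R_0$ the $\sigma$-algebras $\mathcal F_{R_0}(0)$ and $\mathcal F_{R_0}(x)$ are independent, so $\tau_{n_0}^{\sss(R_0)}(0)$ and $\tau_{n_0}^{\sss(R_0)}(x)$ are independent. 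A triangle inequality with bounded continuous test functions $f,g$ then yields
\[\big|\Ev[f(V_0)g(V_x)]-\Ev[f(V_0)]\,\Ev[g(V_x)]\big|\longrightarrow 0\quad(\|x\|\to\infty),\]
and $\eta\downarrow 0$ gives \eqref{eq:indep}.

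\textbf{Lower bound on the $L$-distance.} The same scheme, with Theorem~\ref{thm:max-displace} replacing Dijkstra finiteness, yields the additive lower bound. Pick $N_0=N_0(\eta)$ with $\Pv(V_x-M_{N_0}(x)>\eta)<\eta$, and by Theorem~\ref{thm:max-displace} pick $R_0$ (uniform in $x$) with $\Pv(B^G_{N_0}(x)\subset B^2_{R_0}(x))>1-\eta$. For $\|x\|>2R_0$ the event $\mathcal G:=\{B^G_{N_0}(0)\subset B^2_{R_0}(0)\}\cap\{B^G_{N_0}(x)\subset B^2_{R_0}(x)\}$ has probability at least $1-2\eta$, and on $\mathcal G$ the two graph balls are vertex-disjoint. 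Any path $\pi$ from $0$ to $x$ then contains a first vertex $u\in\Delta B^G_{N_0}(0)$ and, subsequently, a last vertex $v\in\Delta B^G_{N_0}(x)$; the $0$-to-$u$ and $v$-to-$x$ sub-paths are vertex-disjoint, and by the definition of $M_{N_0}$ together with its monotonicity in $n$ have $L$-lengths at least $M_{N_0}(0)$ and $M_{N_0}(x)$, respectively. Therefore
\[\mathrm d_L(0,x)\ge M_{N_0}(0)+M_{N_0}(x)\ge V_0+V_x-2\eta\]
on $\mathcal G$, which gives \eqref{eq:dist-minus-explosion} after choosing $\eta<\ve/2$.

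\textbf{Main obstacle.} The essential input is Theorem~\ref{thm:max-displace}: without a deterministic Euclidean enclosure of graph-distance balls, neither the Dijkstra localisation for \eqref{eq:indep} nor the cut argument for \eqref{eq:dist-minus-explosion} would produce a fixed box to exploit; given that enclosure, the remaining steps are careful applications of Dijkstra's algorithm, Theorem~\ref{thm:fpp-profile}, and translation invariance. The matching upper bound, stated as Conjecture~\ref{conj:distances}, is the genuinely hard direction: the infinite explosive backbones from $0$ and from $x$ live on the non-compact lattice $\Z^d$ and are not guaranteed to approach each other in either graph or Euclidean distance, so one cannot splice two local explosions into a short joining path without quantitative spatial information on the backbones that the current $a.s.$\ explosion argument does not provide.
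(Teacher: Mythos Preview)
Your proof is correct, and it takes a genuinely different route from the paper's. The paper does not use Theorem~\ref{thm:max-displace} here at all; instead it invokes the external graph-distance lower bound \cite[Theorem 5.3]{DeiHof13}, which says $\Pv(d_G(0,x)\ge (1-\delta)2\log\log\|x\|/|\log\kappa|)\to 1$. Setting $r_x:=\lfloor(1-2\delta)\log\log\|x\|/|\log\kappa|\rfloor$, the balls $B^G_{r_x}(0)$ and $B^G_{r_x}(x)$ are disjoint on this event, and the paper then argues directly that $d_L(0,x)\ge M_{r_x}(0)+M_{r_x}(x)$ with the two summands independent; since $r_x\to\infty$, both summands converge to their respective explosion times, giving \eqref{eq:indep} and \eqref{eq:dist-minus-explosion} at once.

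Your approach trades the external citation for the paper's own Theorem~\ref{thm:max-displace} (for the lower bound) and an elementary Dijkstra localisation (for asymptotic independence). This has two advantages: it is self-contained, and your independence argument is cleaner---by localising $\tau_{n_0}$ in a \emph{deterministic} Euclidean box you get genuine measurability with respect to independent $\sigma$-algebras, whereas the paper's claim that $M_{r_x}(0)$ and $M_{r_x}(x)$ are independent on the (random) event $\{B^G_{r_x}(0)\cap B^G_{r_x}(x)=\emptyset\}$ glosses over the fact that the graph balls are themselves random. The cost of your approach is a two-step limit (fix $N_0$, let $\|x\|\to\infty$, then $N_0\to\infty$ via $\eta\downarrow 0$), whereas the paper's growing $r_x$ handles both limits simultaneously. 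Your final paragraph slightly overstates the role of Theorem~\ref{thm:max-displace}: it is essential for \emph{your} route, but the paper's own proof bypasses it entirely via the cited graph-distance bound.
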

The theorem could have been stated in a similar formulation than the one in Theorem \ref{thm:dist-cons}:
\[ \lim_{m\to \infty}\Pv(\mathrm{d}_L(0,\lfloor m\underline e\rfloor ) -(V_0+V_{\lfloor m\underline e\rfloor}) < -\ve) =0.  \]

Next we study $\SFPWL$ with finite-variance degrees, i.e., when $\gamma>2$. 
Recall the notation for different metric balls from \eqref{eq:eucledian}-\eqref{eq:weighted}. The next theorem bounds $B_t^L(0)$ in the Euclidean space.
\begin{theorem}\label{thm:shape}
In $\SFPWL$ with $\al>d$, $\gamma>2$ and $F_L(0)=0$, there exist \emph{deterministic} constants $C, \wit C> 0$ (depending only on the distribution of $W$ and $L$ but not on the realization of the graph), and random constants $n_0(x)\in \N, t_0(x)\in \R$ such that, almost surely for all $n\ge n_0(x), t\ge t_0(x)$, 
\be\label{eq:shape-1} B_n^G(x) \subset B^2_{\exp\{Cn\}} (x), \qquad  B_t^L(x) \subset B^2_{\exp\{\widetilde Ct\}} (x).  \ee 
The value of $n_0(x), t_0(x)$ depend on the random realisation of the graph as well as on $x\in \Z^d$.
\end{theorem}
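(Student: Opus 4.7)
I would first prove the graph-ball containment $B_n^G(x)\subset B^2_{\exp(Cn)}(x)$ by induction on $n$ via a Borel--Cantelli argument, and then deduce the $L$-ball containment from it by exploiting $F_L(0)=0$.

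For the graph ball, the key input is a polynomial tail bound on the longest edge incident to a vertex, $M(y) := \max\{\|z-y\|:(y,z)\in E\}$. Conditioning on $W_y=w$ and using $1-e^{-u}\le u\wedge 1$ in the connection probability \eqref{eq:connect}, a union bound yields
\[
\Pv(M(y)>r\mid W_y=w)\le\sum_{\|z-y\|>r}\Ev_{W_z}\!\Big[\tfrac{\lambda w W_z}{\|z-y\|^{\alpha}}\wedge 1\Big].
\]
Computing the inner expectation from \eqref{eq:tail-W} (handling $\tau>2$ and $\tau\in(1,2)$ separately, since in the latter case $\Ev[W]=\infty$ and one has to use $\Ev[1-e^{-sW}]\asymp s^{\tau-1}$), then truncating and integrating over $W_y$, gives $\Pv(M(y)>r)\le Cr^{-\mu}$ for some $\mu>0$; the hypothesis $\gamma>2$ is used precisely to make $\mu>d$. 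The inductive step then reads: if $B_n^G(x)\subset B^2_{R_n}(x)$ with $R_n:=\exp(Cn)$, union-bounding over the $\le CR_n^d$ vertices of $B^2_{R_n}(x)$ the event that some incident edge has length $>R_{n+1}-R_n$ gives probability $\le CR_n^{d-\mu}=C\exp(-Cn(\mu-d))$. Choosing $C$ large, this is summable in $n$, and Borel--Cantelli supplies a random threshold $n_0(x)$ past which the induction runs.

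For the $L$-ball I would exploit $F_L(0)=0$: the Laplace transform $\varphi(s):=\Ev[e^{-sL}]<1$ for every $s>0$ and $\varphi(s)\to 0$ as $s\to\infty$. For any fixed path $\pi$ of graph length $k$, Chernoff gives $\Pv(|\pi|_L\le t)\le e^{st}\varphi(s)^k$. Using $\gamma>2$ so that $\Ev[\deg]<\infty$ (cf.\ \eqref{eq:degree-dist}) together with a branching-process comparison to bound the expected number of $k$-walks from $x$ by $D^k$, and choosing $s$ large enough that $D\varphi(s)<1$, the expected number of paths from $x$ with $|\pi|_L\le t$ and graph length $\ge\tilde ct$ decays exponentially in $t$. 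Borel--Cantelli along integer $t$, combined with the monotonicity $B_t^L\subset B_{\lceil t\rceil}^L$, forces $B_t^L(x)\subset B_{\tilde ct}^G(x)$ for $t$ large; composing with the first containment gives $B_t^L(x)\subset B^2_{\exp(\tilde Ct)}(x)$ with $\tilde C=C\tilde c$.

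The main obstacle is securing $\mu>d$ across the full range $\gamma>2$: the naive Markov computation yields $\mu=\min(\alpha,\gamma d)-d$, which exceeds $d$ when $\alpha>2d$ or when $\tau\in(1,2)$, but in the crossover region $\tau>2$ with $\alpha\in(d,2d]$ one has $\mu\le d$ and the argument must be refined, for instance by replacing the $R_n^d$ union bound by a summation only over vertices actually in $B_n^G(x)$, whose expected cardinality $\Ev[|B_n^G(x)|]\le D^n$ via the finite-variance ($\gamma>2$) branching comparison is exponentially smaller than $R_n^d$; this tighter bookkeeping is what I expect to close the Borel--Cantelli sum in the remaining parameter range.
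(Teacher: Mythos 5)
Your second containment ($L$-ball inside graph-ball of radius $\tilde c t$, then compose) is essentially the paper's argument: the paper bounds $\Pv(B_t^L(0)\not\subset B^2_{\exp\{\hat Ct\}}(0))\le \Ev[Z_n^{\mathrm B}]\,F_L^{\ast n}(t)$ with $n\asymp t/t_0$, controls $\Ev[Z_n^{\mathrm B}]\le \mathrm{const}\cdot m_+^n$ via the dominating $\BerBRW$ (this is where $\gamma>2$ enters, through $\Ev[W^{2d/\al}]<\infty$), and bounds the convolution by $\binom{n}{n/2}F_L(t_0)^{n/2}$ with $t_0$ chosen so that $F_L(t_0)$ beats $m_+^2$; your Chernoff bound $\e^{st}\varphi(s)^k$ with $\varphi(s)\to 0$ is an interchangeable way of making the same point, and is fine.

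The first containment is where there is a genuine gap, and you have correctly located it yourself: in the regime $\tau>2$, $\al\in(d,2d]$ (which is nonempty under $\gamma>2$, e.g.\ $\al=3d/2$, $\tau=3$), the single-vertex bound $\Pv(M(y)>r)\lesssim r^{-(\al-d)}$ has exponent $\le d$, and your proposed repair is not yet a proof. Replacing the union bound over $B^2_{R_n}(x)$ by a sum over the vertices of $B_n^G(x)$ runs into two problems. First, the events $\{y\in B_n^G(x)\}$ and $\{M(y)>r\}$ are strongly positively correlated: membership in $B_n^G(x)$ is weight-biased, $\Pv(M(y)>r\mid W_y=w)$ grows with $w$, and for $\tau$ close to $1+2d/\al$ the relevant moments of the size-biased weight are borderline. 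Second, even a first-moment count over paths of length $n+1$ whose last edge is long does not factorize, because $W_{y_i}$ appears in the connection probabilities of two consecutive edges; decoupling these costs you a square root of the weight at each internal vertex, which is exactly why the correct two-point exponent is $\al\min(1,(\tau-1)/2)=d\min(\al/d,\gamma/2)$ rather than $\al\wedge\gamma d$. Note that this corrected exponent exceeds $d$ precisely when $\gamma>2$ and $\al>d$, i.e.\ in the whole range of the theorem. The paper sidesteps all of this by importing the already-proven estimate $\Pv(d_G(0,x)=i)\le i\,(C'\la^{1\wedge(\tau-1)/2})^i(\log\|x\|)^{\kappa}\|x\|^{-\al(1\wedge(\tau-1)/2)}$ from \cite[(5.16)]{DeiHof13}, summing over $i\le n$ and over $\|x\|\ge\e^{Cn}$ with $C$ large, and applying Borel--Cantelli. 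To close your argument in the crossover regime you would in effect have to reprove that path-counting estimate, so you should either cite it or carry out the Cauchy--Schwarz decoupling of consecutive edges explicitly.
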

Note that since $|B^2_{\exp\{\widetilde Ct\}} (x)|=O(\exp\{d\widetilde C t \})$, the first statement in \eqref{eq:shape-1} implies that $|B_t^{L}(x)| \le \exp\{ d C' t \}$. Thus, $|B_t^L(x)|$ grows at most exponentially, and thus explosion is impossible:
\begin{corollary}\label{corr:non-expl}
In $\SFPWL$ with $\al>d$,$\gamma>2$ and $F_L(0)=0$, 
 \be\label{eq:exp-growth-2} \tau_n(x) \ge O(\log n),\ee
and thus $\tau_n(x)\to \infty$ almost surely. That is, explosion is never possible when $\gamma>0$.
\end{corollary}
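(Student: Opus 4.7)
My plan is to derive the corollary as a direct volume-counting consequence of Theorem \ref{thm:shape}, so no new random-graph input is needed beyond that shape theorem.

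The starting point is the inclusion
\[
	B_t^L(x) \;\subset\; B_{\exp\{\widetilde C t\}}^{2}(x) \qquad \text{a.s.\ for all } t\ge t_0(x),
\]
guaranteed by Theorem \ref{thm:shape}. Since $B_r^2(x)$ is simply a Euclidean ball in $\Z^d$, an elementary lattice-point count yields $|B_r^2(x)| \le C_d\, r^d$ for a dimensional constant $C_d$, hence
\[
	|B_t^L(x)| \;\le\; C_d \exp\{ d\widetilde C\, t\} \qquad \text{for all } t\ge t_0(x).
\]
From the definition $\tau_n(x)=\inf\{t: |B_t^L(x)|\ge n\}$ in \eqref{eq:tau-def}, this inequality can be inverted: if $\tau_n(x)\le t$, then $n\le C_d\exp\{d\widetilde C\,t\}$, so for every $n$ large enough that the required $t$ exceeds $t_0(x)$,
\[
	\tau_n(x) \;\ge\; \frac{\log n - \log C_d}{d\widetilde C}.
\]
This gives the desired lower bound $\tau_n(x)\ge \Omega(\log n)$, which in particular forces $\tau_n(x)\to\infty$ almost surely as $n\to\infty$. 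Since explosion is, by definition, the event $\{\lim_n\tau_n(x)<\infty\}$, its probability is zero, proving the final assertion (the statement ``$\gamma>0$'' at the end of the corollary is a typo for $\gamma>2$, the standing hypothesis).

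The only delicate point is the passage from the shape bound to a useful volume bound, and this is routine because the shape theorem controls $B_t^L(x)$ in terms of a \emph{deterministic} (though possibly large) constant $\widetilde C$; the random time $t_0(x)$ does not affect the asymptotic $\log n$ scaling, it just postpones the regime in which the bound applies. No use of the hypothesis $F_L(0)=0$ is needed beyond what already went into Theorem \ref{thm:shape}. Thus the corollary reduces to the two-line argument above, and the main obstacle, namely the shape estimate itself, has already been absorbed into the theorem being invoked.
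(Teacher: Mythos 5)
Your argument is correct and is essentially identical to the paper's own justification, which appears in the remark immediately preceding the corollary: Theorem \ref{thm:shape} gives $B_t^L(x)\subset B^2_{\exp\{\widetilde C t\}}(x)$, the Euclidean ball has at most $O(\exp\{d\widetilde C t\})$ lattice points, and inverting this yields $\tau_n(x)\gtrsim \log n$ and hence non-explosion. Your handling of the random threshold $t_0(x)$ and the observation that ``$\gamma>0$'' is a typo for ``$\gamma>2$'' are both accurate.
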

Let us write  $\mathrm{Vol}_d$ for the volume of the unit ball in $\R^d$, and define the constant
\be\label{eq:cdla-def}  c_{d,\la}:=2( d + \la^{d/\al} (\mathrm{Vol}_d+ \pi_d)),\ee
with $\pi_d<\infty$ be defined as  $\pi_d:= \sup_{x\ge 0}\{ \int_{y:\|y\|\ge x} 1/\|y\|^\al \mathrm d^dy/ x^{d-\al}\}$.
\begin{remark}\normalfont
It is possible to give a more quantitative bound on the exponential growth of $B_t^L(0)$. By bounding the Malthusian parameter of the BRW in Section \ref{s:explore-BP}, one obtains that
 \be\label{eq:exp-growth-1} \limsup_{t\to \infty}\e^{-\beta t} |B_t^L(0)|=0 \ee
 holds for any $\beta>\beta^+$, where $\beta^+$ is the solution to $\Ev[\e^{-\beta^+ L}] \Ev[W^{2d/\al}] c_{d,\la}=1$. The proof requires cca 8 pages, and it does not provide a spatial embedding, so we decided to cut it from the paper.  
\end{remark}
\subsection{Organization of the paper}
We discuss some open questions in Section \ref{s:discussion}, and relate our model to GIRGs. Then, we start proving our results. In Section \ref{s:explore-BP} we define two BRWs that both provide a coupling upper bound on the exploration of the cluster of a vertex in $\SFPWL$. Theorem \ref{thm:thinned-BRW} --  the coupling of the exploration to these BRWs -- is a crucial ingredient for proving lower bounds for the results in Section \ref{s:results}. 
In Section \ref{s:gamma>2} we prove Theorem \ref{thm:shape}. Then, in Section \ref{s:prelim} we state some basic notions and introduce a boxing method that structures the high-weight vertices in the graph. In Section \ref{s:exp-proofs} we prove the explosive part of Theorem \ref{thm:fpp-profile}, and Theorem \ref{thm:distances}. These are the easier proofs of the paper.
Then, in Section \ref{s:bernoulli} we prepare to prove the conservative theorems, by analysing the size of and maximal displacement in generation $n$ of the upper bounding BRW from Section \ref{s:explore-BP} when $\gamma \in(1,2)$. This section is novel from the BRW point of view since it studies a general BRW  in a random environment that has \emph{infinite mean offspring}.  We bound the double-exponential growth rate of the generation sizes and the maximal displacement of this BRW. Section \ref{s:gamma12} then makes use of the results in Section \ref{s:bernoulli} and proves the  conservative part of Theorem \ref{thm:fpp-profile}, and Theorems \ref{thm:dist-cons}, \ref{thm:max-displace}. 
Finally, Section \ref{s:explore-proof} is devoted to describing an exploration process of the cluster of the origin (Section \ref{s:explore}), which results in the proof of  Theorem \ref{thm:thinned-BRW} (Section \ref{s:coupling-proof}).

\subsection{Discussion and open problems}\label{s:discussion} In this section we list some open questions. First of all, 
we conjecture that the statement about explosive $L$-distances, \eqref{eq:dist-minus-explosion} in Theorem \ref{thm:distances}, can be strengthened:
\begin{conjecture}\label{conj:distances}Under the condition of Theorem \ref{thm:distances}, for any two vertices $x,y\in \Z^d$
\be\label{eq:conj-upper-bound} d_L(x,y) - V_x + V_y \le 0\ee
almost surely. 
Thus, as $\|x\|\to \infty$, $\mathrm d_L(0, x) - (V_0+V_x)\toas 0.$
\end{conjecture}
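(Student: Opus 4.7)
The plan is to establish the matching almost-sure upper bound $\mathrm{d}_L(x,y)\le V_x+V_y$; combined with the in-probability lower bound of Theorem \ref{thm:distances}, this yields the stated a.s.\ convergence $\mathrm{d}_L(0,x)-(V_0+V_x)\toas 0$. It suffices to exhibit, for each rational $\varepsilon>0$, a path from $x$ to $y$ of $L$-length at most $V_x+V_y+\varepsilon$ almost surely; a countable intersection over such $\varepsilon$ gives the conjecture.

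To construct such a path, I would first extract ``explosion backbones'' from each endpoint. The upper bound on $M_n$ in the explosive part of Theorem \ref{thm:fpp-profile} is obtained through a min-summability argument in the spirit of \cite{AmiDev13}: one exhibits explicit paths $x=u_0,u_1,u_2,\ldots$ whose vertex-weights grow double-exponentially, $W_{u_k}\ge \exp(c_x\,(\gamma-1)^{-k})$ for some random $c_x>0$, and whose total $L$-length is at most $V_x+\varepsilon/3$. I would produce a twin backbone $y=v_0,v_1,\ldots$ with analogous parameters $c_y,V_y$. By Theorem \ref{thm:max-displace} the positions satisfy $\|u_k-x\|,\,\|v_k-y\|\le \exp\!\bigl(Y\,(1+\delta)^k(\gamma-1)^{-k}\bigr)$ for any prescribed $\delta>0$ and some random $Y<\infty$.

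The next step is to splice the two backbones through a single short bridging edge. The direct edge $(u_k,v_k)$ is present, conditionally on the weights and positions, with probability $1-\exp(-\lambda W_{u_k}W_{v_k}/\|u_k-v_k\|^\alpha)$, and substituting the above bounds gives
\[ \frac{W_{u_k}\,W_{v_k}}{\|u_k-v_k\|^\alpha}\ \ge\ \exp\!\Big(\bigl(c_x+c_y-\alpha Y(1+\delta)^k\bigr)(\gamma-1)^{-k}\Big). \]
For $\delta$ small and $k$ in a suitable range this blows up, so the direct edge is present with probability essentially $1$; conditionally on presence, its $L$-weight is a fresh copy of $L$. Since $\mathbf{I}(L)<\infty$ forces $0$ to lie in the support of $L$, among the many available cross-edges (indexed by levels $k$ and by fluctuations of the backbone construction) one can pick one of $L$-weight at most $\varepsilon/3$ with probability one. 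Concatenating the two backbones through such a bridge yields a path of total $L$-length at most $V_x+V_y+\varepsilon$, and Borel--Cantelli over $\varepsilon=2^{-n}$ upgrades the bound to almost sure.

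The hardest step, and the reason the authors keep this as a conjecture, is that the exponent $1/(\gamma-1)$ governs \emph{both} the backbone weight growth and the double-exponential bound on Euclidean displacement from Theorem \ref{thm:max-displace}: the multiplicative prefactor $(1+\delta)^k$ in the displacement inevitably overtakes the weight exponent as $k\to\infty$, so the na\"ive pairing $u_k\leftrightarrow v_k$ produces a high connection probability only for finitely many levels. The natural remedy is to replace each single-path backbone by a branching subtree of the explosion cluster -- double-exponentially many high-weight candidates at each generation -- and to run a second-moment argument to locate a pair $(u,v)$ whose weight-to-distance ratio genuinely dominates. This is delicate because the two subtrees become statistically dependent once they approach each other in $\Z^d$, and making the estimate quantitative enough to feed the Borel--Cantelli step appears to require a new ingredient beyond the techniques developed in this paper.
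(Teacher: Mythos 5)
This statement is an open conjecture in the paper, not a proved theorem: the authors explicitly state that they ``failed to show'' the matching upper bound, prove only the in-probability lower bound \eqref{eq:dist-minus-explosion}, and reduce the conjecture to another unproved conjecture (Conjecture \ref{conj:growth}) via Claim \ref{claim:conj1-2}. Your proposal does not close this gap, and indeed your final paragraph concedes as much. Two steps are genuinely missing. First, the backbone you start from does not exist with the claimed properties. The min-summability argument of Theorem \ref{thm:fpp-profile} produces the \emph{greedy} path of Definition \ref{def:greedy-path}, whose total length is $V_x^{\mathrm{gr}}\ge V_x$ with no control whatsoever on the gap $V_x^{\mathrm{gr}}-V_x$; it does \emph{not} produce a path of length at most $V_x+\varepsilon/3$ with double-exponentially growing vertex-weights. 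A path of length within $\varepsilon/3$ of $V_x$ must essentially be a shortest exploding path, and for those the paper points out precisely the obstruction you would need to overcome: their degrees and incident edge-weights fail to be i.i.d., their spatial escape rate is unknown (this is exactly Conjecture \ref{conj:growth}), and they may wander off avoiding high-weight vertices. Second, even granting the backbones, your bridging step fails for the reason you yourself identify: the displacement bound $\exp\{Y(1+\delta)^k(\gamma-1)^{-k}\}$ raised to the power $\alpha$ eventually dominates the weight product, so the direct edge $(u_k,v_k)$ is available only for finitely many $k$, and with finitely many independent trials you cannot guarantee a bridge of $L$-weight at most $\varepsilon/3$ almost surely (only with probability roughly $1-(1-F_L(\varepsilon/3))^{K}<1$). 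The proposed remedy via branching subtrees and a second-moment argument is not carried out, and the dependence between the two subtrees once they approach each other --- together with the fact that the near-optimal backbones are themselves measurable functions of the environment, destroying the independence of the candidate bridging edge-weights --- is exactly the difficulty the authors describe in Section \ref{s:discussion}. In short, the proposal reproduces the authors' heuristic programme (it is close in spirit to the sketch of Claim \ref{claim:conj1-2}) but does not supply the new ingredient that would turn it into a proof.
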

The fact that the distance between two vertices tends to the sum of two explosion times is observed in the configuration model with infinite-variance degrees \cite[Theorem 1.2]{BarHofKomdist}, a non-spatial model with finitely many vertices.
A related model to SFP, with a finite number of vertices is the geometric inhomogeneous random graph model, (GIRG) \cite{BriKeuLen15}. This model lives on a compact space, - the $d$-dimensional unit torus $\mathbb T_d=\R^d/\Z^d$ or unit cube $[0,1]^d$ -  the location of $n$ vertices is sampled independently and uniformly at random according to the Lebesque measure and then an i.i.d.\ power-law vertex-weight $W_x$ is assigned to each vertex $x$. The long-range parameter of the model, $\wit \al$ is assumed to be larger than $1$. Two vertices, given their locations $x,y\in \mathbb T_d$ and vertex-weights $W_x, W_y$ are then connected with probability $p_n(x,y,W_x, W_y)$ that satisfies
\[ c_1 \le p_n(x,y, W_x, W_y) \min\left\{1, (W_x W_y/n)^{\wit\al}/ \|x-y \|^{\wit \al d} \right \} \le C_1 \]
for some $0<c_1<C_1<\infty$. Applying the transformation $T_nx:=n^{1/d} x$ maps the unit cube onto $[0,n^{1/d}]^d$, i.e., to a box with volume $n$. Considering the images of the points $T_n x$ as vertices, the connection probabilities in the model after this transformation become
\be\label{eq:p-GIRG} p(T_nx, T_ny, W_x, W_y)= \Theta\left( \min \left\{  1, \|T_nx-T_ny \|^{-\alpha d} (W_x W_y)^\alpha\right\}\right),\ee
that is, the factor $1/n$ cancels. Note that the edge connection probabilities of SFP in \eqref{eq:connect} also satisfy \eqref{eq:p-GIRG}. A Poissonised version of the number of points in GIRGs could provide a natural  extension of the model to $\mathbb R^d$. 
 If we assume that the vertex-weights follow a power-law distribution with exponent $\wit \tau>2$ in GIRG, then it is not hard to see that setting $\al=d\wit \al$ and $\tau-1=(\wit \tau-1)/\wit \al$ in the SFP model produces $\gamma=\wit \tau-1$. So, the GIRG can be looked at as a modification of SFP where (a) the $n$th graph is restricted to a box of volume $n$, (b) the locations of vertices are randomised (instead of the grid $\Z^d$ in SFP) (c) the edge probabilities are somewhat more general. 
 Here we state the corresponding conjecture about GIRGs.
\begin{conjecture}
Add i.i.d.\ edge-weights from distribution $L$ with $\mathbf{I}(L)<\infty$ to the edges of the GIRG. When the vertex-weights follow a power law distribution with exponent $\wit \tau\in (2,3)$,  the $L$-distance between two uniformly chosen vertices  from the giant component  converges to the sum of two i.i.d.\ random variables. In particular, the average $L$-distance does not grow with $n$.
\end{conjecture}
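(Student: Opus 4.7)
The plan is to exploit the structural correspondence between $\mathrm{GIRG}$ and $\SFPWL$ described after \eqref{eq:p-GIRG}, and reduce the conjecture to a statement about two i.i.d.\ SFP explosion times paired with a short-connection argument. Take $U_n, V_n$ uniform on the giant component. Under the rescaling $T_n$ they lie in a box of volume $n$ and are, with probability tending to one, at Euclidean distance $\Theta(n^{1/d})$. In a neighbourhood of each of them one would couple the underlying Poisson point process and its vertex-weights to an independent infinite Poissonised SFP environment whose parameters satisfy $\gamma=\widetilde\tau-1\in(1,2)$. Since explosion in SFP is determined by an essentially local event (finitely many high-weight vertices in a bounded Euclidean neighbourhood, extractable from the proof of Theorem \ref{thm:fpp-profile}(1)), this coupling should be valid up to exploring $n^{o(1)}$ vertices on each side and yield two i.i.d.\ explosion times $V^{\sss{(1)}},V^{\sss{(2)}}$.

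The lower bound $\mathrm d_L(U_n,V_n)\ge V^{\sss{(1)}}+V^{\sss{(2)}}-\varepsilon$ with high probability would then follow the blueprint of \eqref{eq:dist-minus-explosion} in Theorem \ref{thm:distances}: any connecting path must leave an $L$-metric ball of radius close to $V^{\sss{(1)}}$ around $U_n$ and, symmetrically, one around $V_n$, and by the local couplings these leaving times are at least the explosion times minus a vanishing error. Independence is inherited from the asymptotically large Euclidean separation of the two coupling windows. This step is essentially a transfer of Theorem \ref{thm:distances} across the coupling.

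For the upper bound $\mathrm d_L(U_n,V_n)\le V^{\sss{(1)}}+V^{\sss{(2)}}+\varepsilon$ I would run the $L$-ball exploration from $U_n$ up to time $V^{\sss{(1)}}+\varepsilon/3$, at which stage one has already collected infinitely many vertices in the SFP coupling, with spatial spread far exceeding $n^{1/d}$ by the double-exponential maximal displacement of Theorem \ref{thm:max-displace}; and analogously from $V_n$. The target is to show that within the residual $L$-budget $\varepsilon/3$ some vertex of the $U_n$-explosion cluster is joined, via a single long edge of tiny edge-weight, to some vertex of the $V_n$-explosion cluster. The criterion $\mathbf{I}(L)<\infty$ provides edges of arbitrarily small weight incident to high-weight vertices, and the boxing machinery of Section \ref{s:prelim} produces a dense family of high-weight vertices across the annulus between the two clusters. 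The conclusion about the average $L$-distance then follows from tightness of the limit.

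The principal obstacle is quantifying spatial \emph{density}, rather than merely spatial \emph{spread}, of the explosion cluster. Theorem \ref{thm:max-displace} locates the farthest vertex at graph distance $n$ but does not guarantee that $B_{V^{\sss{(1)}}+\varepsilon/3}^L(U_n)$ intersects every macroscopic Euclidean ball inside the volume-$n$ box. A natural route is to enumerate the high-weight boxes constructed in Section \ref{s:prelim} and show, via a second-moment computation across boxes, that every box near $V_n$ contains with high probability a vertex belonging to the $U_n$-explosion cluster, by iterating the explosion construction inside each box and using the local independence of disjoint Poisson regions. Such a covering-type strengthening of Theorem \ref{thm:max-displace} is precisely what is missing in the infinite $\Z^d$ setting (as the authors note before Theorem \ref{thm:distances}); the compactness of the GIRG domain, in contrast with $\Z^d$, is what should render the density statement attackable and close the gap left open by Conjecture \ref{conj:distances}.
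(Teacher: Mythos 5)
The statement you are proving is stated in the paper as a \emph{conjecture}: the authors give no proof of it, and they explicitly flag the very step your argument hinges on as the open problem (see the discussion surrounding Conjecture \ref{conj:distances}, Conjecture \ref{conj:growth} and Claim \ref{claim:conj1-2}). Your proposal is a sensible strategy sketch, and the lower-bound half (transferring the argument of Theorem \ref{thm:distances} across a local coupling) is plausible, but the upper bound contains a genuine gap that you acknowledge without closing. Asserting that the compactness of the GIRG domain ``should render the density statement attackable'' is a restatement of the difficulty, not a resolution of it: you still need to prove that $B^L_{V^{(1)}+\varepsilon/3}(U_n)$ is spatially dense enough to meet a high-weight vertex adjacent (via a small-weight edge) to the $V_n$-cluster, and no such covering estimate is established anywhere in the paper or in your proposal.

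Moreover, the specific route you suggest --- a second-moment computation across the boxes of Section \ref{s:prelim} ``using the local independence of disjoint Poisson regions'' --- runs into exactly the obstruction the authors describe below Conjecture \ref{conj:distances}: the explosion cluster at time $V^{(1)}+\varepsilon/3$ is a heavily conditioned object, so the degrees of its vertices and the edge-weights on edges incident to it are \emph{not} i.i.d., and independence-based moment bounds for the connecting edge cannot be applied without a decoupling argument (this is precisely why the authors need the growth hypothesis of Conjecture \ref{conj:growth} even to sketch Claim \ref{claim:conj1-2}). Two smaller issues: explosion is not ``an essentially local event'' determined by a bounded Euclidean neighbourhood --- the greedy path of Definition \ref{def:greedy-path} traverses infinitely many annuli of doubly-exponentially growing radii, so your coupling of the GIRG explosion time to an infinite-volume SFP explosion time requires a quantitative truncation argument (as in the merging-annuli construction up to scale $n(x)$), not just locality; and in a finite GIRG one must also rule out that the cheapest route exploits the torus/boundary geometry in a way unavailable to the infinite model. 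As it stands, the proposal identifies the right obstacles but does not overcome them.
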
 
To prove Conjecture \ref{conj:distances} on the SFP model on $\Z^d$, one could try  to show that the two \emph{shortest} explosive paths $\gamma^\star_0$ and $\gamma^{\star}_x$, having length $V_0, V_x$ respectively, can be connected by a short path (length less than $\ve$). This turns out to be a hard problem, since the space $\Z^d$ is non-compact and in principle it is possible that these paths wander off in different directions, avoiding very high degree vertices, so that a short connection cannot be established.
Further note that shortest exploding paths are special, in particular, both the degrees of their vertices and the edge-weights on the edges attached to them fail to be i.i.d. and we do not know how `densely' they cover the vertices of $\Z^d$. A shortest path that  goes to infinity slowly  can cause strong dependencies on the neighboring edge-weights, which makes probabilistic bounds 
based on independence impossible to apply. Based on the analogy to the conservative setting\footnote{Where Conjecture \ref{conj:growth} is not that hard to prove for the path realising $M_n(0)$.}, we believe that the shortest exploding path jumps off towards infinity double-exponentially:

\begin{conjecture}\label{conj:growth}
Let $\gamma_{0}^\star=(0, v_1^\star, v_2^\star, \dots):= \arg \min \{   |\gamma_L| \}$ be the shortest exploding path starting at $0$.
 Then there exists some $b>1$ such that $\| v_n^\star \| \ge \exp\{ b^n\}$.
\end{conjecture}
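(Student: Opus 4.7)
The strategy proceeds in two steps: first, establish double-exponential growth of the vertex-weights $W_k^\star := W_{v_k^\star}$ along the shortest exploding path $\gamma_0^\star$, and then use the sparsity of high-weight vertices in $\SFPW$ to convert this into the spatial lower bound.

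For the weight growth, I would use that $L_{e_k^\star} \ge M_{k-1}^\star := \min_{e \sim v_{k-1}^\star} L_e$. Conditional on the graph and on $D_{k-1}^\star := \deg(v_{k-1}^\star)$, the minimum $M_{k-1}^\star$ is distributed as $F_L^{(-1)}\bigl(E_{k-1}/D_{k-1}^\star(1+o(1))\bigr)$ with $E_{k-1} \sim \operatorname{Exp}(1)$ independent across $k$, so $V_0 = \sum_k L_{e_k^\star} < \infty$ forces $\sum_k F_L^{(-1)}(c E_{k-1}/D_{k-1}^\star) < \infty$ almost surely. Combining this with $\mathbf{I}(L) < \infty$ (equivalently, with the convergence of $\sum_k F_L^{(-1)}(c\,\mathrm{e}^{-b^k})$ for any $b>1$ and $c>0$), together with the local-switching principle that $\gamma_0^\star$ minimizes the $L$-length, I would aim to show that $D_k^\star \ge \exp\{\mathrm{e}^{ck}\}$ for some random $c>0$ and all $k\ge k_0(\omega)$. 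Since $D_k^\star$ and $W_k^\star$ satisfy $D_k^\star \asymp (W_k^\star)^{d/\alpha}$ up to slowly-varying fluctuations \cite{DeiHof13}, this yields $W_k^\star \ge \exp\{\mathrm{e}^{c'k}\}$ for some $c'>0$.

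For the spatial step, the expected number of vertices in $B_{R_k}^2(0)\cap\Z^d$ with weight at least $w_k$ is of order $R_k^d \, w_k^{-(\tau-1)}$. Choosing $R_k := \exp\{b^k\}$ with $1<b<\mathrm{e}^{c'}$ and $w_k := \exp\{\mathrm{e}^{c'k}/2\}$, this expectation is bounded by $\exp\{d b^k - (\tau-1)\mathrm{e}^{c'k}/2\}$, which is summable in $k$. A Borel--Cantelli argument then gives that almost surely, for all $k$ large enough, no vertex of weight $\ge w_k$ lies in $B_{R_k}^2(0)$. Since $W_k^\star \ge w_k$ from the previous step, this forces $\|v_k^\star\| > \exp\{b^k\}$ eventually, yielding the conjecture.

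The hard part is the first step --- upgrading min-summability to a pointwise double-exponential lower bound on $D_k^\star$. The convergence of $\sum_k F_L^{(-1)}(c/D_k^\star)$ can hold for many sequences growing slower than double-exponentially; for instance, if $F_L^{(-1)}(y) = 1/|\log y|$ near $0$ (so that $\mathbf{I}(L)<\infty$), then even $D_k^\star \asymp \exp\{k^2\}$ produces a convergent sum and would fail the spatial bound. Promoting the summability estimate to a genuine pointwise lower bound therefore requires using the \emph{minimality} of $\gamma_0^\star$: if $D_k^\star$ were too small, one should be able to switch $v_k^\star$ to a higher-weight neighbor of $v_{k-1}^\star$ and thereby shorten $\gamma_0^\star$. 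Making this local-switching argument rigorous --- in particular, controlling how the tail of the explosion time in the subtree below each candidate neighbor depends on its weight, while the edges and descendants of different neighbors are far from independent due to the spatial correlations in $\SFPWL$ --- is the true crux of the problem and the reason the statement is posed as a conjecture.
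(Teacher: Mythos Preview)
The statement is a \emph{conjecture} in the paper, not a theorem; the paper offers no proof of it. There is therefore nothing to compare your argument against. The paper explicitly flags the obstruction: along the shortest exploding path both the degrees and the incident edge-weights fail to be i.i.d., and a path that escapes to infinity slowly creates strong dependencies that block the usual independence-based bounds.

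Your proposal is a reasonable outline of how one might try to attack the problem, and you are honest about where it breaks. The second (spatial) step is fine: once one has $W_k^\star \ge \exp\{\mathrm{e}^{c'k}\}$, the Borel--Cantelli argument you sketch does give the conclusion. The real content is entirely in the first step, and here you correctly identify the gap. Convergence of $\sum_k F_L^{(-1)}(c/D_k^\star)$ does not by itself force double-exponential growth of $D_k^\star$; your example $F_L^{(-1)}(y)=1/|\log y|$ with $D_k^\star \asymp \exp\{k^2\}$ makes this concrete. The proposed cure --- a local-switching argument using minimality of $\gamma_0^\star$ --- is natural, but as you note, the subtrees below competing neighbors of $v_{k-1}^\star$ share the same environment and many of the same vertices, so their explosion times are heavily correlated and one cannot simply compare them as if independent. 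This is exactly the difficulty the paper singles out when explaining why the statement is left open, so your diagnosis matches the authors'. In short: your plan is a plausible route, the gap you name is genuine, and closing it would resolve the conjecture.
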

Double-exponential growth would be fast enough so that the shortest path cannot cover too many vertices in one region, and hence some independence on the edge-weights can be maintained. This conjecture is strong enough to imply Conjecture \ref{conj:distances}:
\begin{claim}\label{claim:conj1-2}
Conjecture \ref{conj:growth} implies Conjecture \ref{conj:distances}.
\end{claim}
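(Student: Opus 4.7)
The strategy is to use the double-exponential spatial growth from Conjecture~\ref{conj:growth} to link the two shortest exploding paths from $x$ and from $y$ by a single $\SFPW$-edge carrying an arbitrarily small $L$-weight, thereby upgrading the asymptotic lower bound of Theorem~\ref{thm:distances} to the matching almost-sure upper bound.

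Fix $\eps>0$ and let $\gamma_x^\star=(x=v_0^{x,\star},v_1^{x,\star},\ldots)$ denote the shortest exploding path from $x$, of total $L$-length $V_x$. Pick $N=N(\eps)$ such that $\sum_{i\ge N}L_{e_i^{x,\star}}<\eps/4$, so that $d_L(x,v_N^{x,\star})\le V_x$. By Conjecture~\ref{conj:growth} (applied to $x$ using translation invariance in distribution), $\|v_N^{x,\star}-x\|\ge \exp(b^N)$ for some $b>1$ once $N$ is large enough. Define $v_{N'}^{y,\star}$ analogously. The first subclaim I would establish is that the vertex-weights along $\gamma_x^\star$ must grow polynomially in their spatial distance from $x$: min-summability applied to the tail of the exploding path, in the spirit of Sections~\ref{s:explore-BP} and \ref{s:bernoulli}, forces $W_{v_n^{x,\star}}$ to dominate $\|v_n^{x,\star}\|^{\alpha+\delta}$ for some $\delta>0$, because otherwise the $L$-weights of the onward edges of $\gamma_x^\star$ could not be summable. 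Combined with the double-exponential spatial growth this yields $W_{v_N^{x,\star}}W_{v_{N'}^{y,\star}}/\|v_N^{x,\star}-v_{N'}^{y,\star}\|^\alpha\to\infty$, so by \eqref{eq:connect} the two truncation vertices are directly connected in $\SFPW$ by an edge with probability tending to one as $N,N'\to\infty$.

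The second and more delicate step is to control the $L$-weight on this connecting edge. Here I would employ a two-stage revelation: first reveal $\gamma_x^\star\cup\gamma_y^\star$ together with the edge-weights needed for their optimality, then sample the $L$-weights on all remaining edges. Because Conjecture~\ref{conj:growth} forces $\gamma_x^\star\cup\gamma_y^\star$ to be a double-exponentially sparse subset of $\Z^d$, essentially all candidate edges between $v_N^{x,\star}$ and a neighbourhood of $v_{N'}^{y,\star}$ remain unrevealed. The connecting $L$-weight can then be stochastically dominated by the minimum of order $W_{v_N^{x,\star}}\wedge W_{v_{N'}^{y,\star}}$ i.i.d.\ copies of $L$, which tends to $0$ because finiteness of $\mathbf{I}(L)$ forces $F_L^{(-1)}(0^+)=0$. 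This yields $d_L(v_N^{x,\star},v_{N'}^{y,\star})<\eps/2$ with high probability, hence $d_L(x,y)<V_x+V_y+\eps$. Letting $\eps\downarrow 0$ along a summable sequence and invoking Borel--Cantelli upgrades this to the almost-sure bound $d_L(x,y)\le V_x+V_y$.

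The main obstacle is precisely the independence step. The shortest exploding path is defined by a global optimisation over the environment, so conditioning on its location creates subtle correlations with surrounding edge- and vertex-weights. The double-exponential sparsity from Conjecture~\ref{conj:growth} is exactly what makes these correlations negligible, but a rigorous implementation of the two-stage revelation requires an adaptation of the boxing machinery of Section~\ref{s:prelim} to condition away the sparse set $\gamma_x^\star\cup\gamma_y^\star$ while preserving the product structure on the remaining edges, analogous in spirit to the techniques used for the conservative upper bound \eqref{eq:dist-cons-1111}.
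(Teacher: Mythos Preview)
Your route differs from the paper's: you attempt to connect the two shortest exploding paths \emph{directly} by a single edge between far-out vertices $v_N^{x,\star}$ and $v_{N'}^{y,\star}$, whereas the paper's sketch routes both exploding paths through the \emph{greedy path} infrastructure of Section~\ref{s:boxing-greedy}. Concretely, the paper follows $\gamma_0^\star,\gamma_x^\star$ until reaching high-weight vertices $v_0^\star,v_x^\star$, then connects each of these to some box-center $c_n^{\mathrm{gr}},c_m^{\mathrm{gr}}$ on the greedy path within cost $\ve/3$, and finally uses the tail $\gamma^{\mathrm{gr}}[c_n^{\mathrm{gr}},c_m^{\mathrm{gr}}]$, which has cost $<\ve/3$ by explosiveness of the greedy construction. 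The role of Conjecture~\ref{conj:growth} is then that $\gamma^\star$ visits each annulus $\Gamma_n$ only $O(1)$ times, so the edge-weights from $v_0^\star$ to the $b_n$ many centers $(c_n^{(i)})_{i\le b_n}$ are essentially fresh and one can take a minimum over $b_n$ i.i.d.\ copies of $L$.

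Your direct-connection approach has two genuine gaps. First, the weight-growth assertion $W_{v_n^{x,\star}}\gtrsim \|v_n^{x,\star}\|^{\alpha+\delta}$ is too strong and is not forced by min-summability: from the mere existence of the edge $(v_n^\star,v_{n+1}^\star)$ one gets only $W_{v_n^\star}W_{v_{n+1}^\star}\gtrsim \|v_{n+1}^\star\|^\alpha$, which in the balanced regime gives $W_{v_n^\star}\approx \|v_{n+1}^\star\|^{\alpha/2}$. Under Conjecture~\ref{conj:growth} with $b$ close to $1$ this is $\|v_n^\star\|^{\alpha b/2}$, far below $\|v_n^\star\|^{\alpha}$, so $W_{v_N^{x,\star}}W_{v_{N'}^{y,\star}}/\|v_N^{x,\star}-v_{N'}^{y,\star}\|^\alpha$ need not diverge and the single connecting edge need not exist. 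Second, even if it does, a \emph{single} edge carries a \emph{single} fresh copy of $L$, which you cannot force below $\ve/2$; your attempt to take a minimum over ``candidate edges to a neighbourhood of $v_{N'}^{y,\star}$'' then requires a further short step back onto $\gamma_y^\star$, which is exactly the problem you started with. The greedy-path detour in the paper's sketch solves both issues simultaneously: the centers $c_n^{(i)}$ have guaranteed weight $\exp\{C^n(1-\delta)\alpha/\gamma\}$ by Lemma~\ref{lem:layers-complete}, and there are $b_n$ of them at each level, supplying the many fresh $L$-samples needed for a min-summability argument.
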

This implication is far from obvious, we provide a sketch proof below the proof of Theorem \ref{thm:distances}.
Moving away from the $\gamma\in (1,2)$ setting, there is a lot to discover when $\gamma>2$. Typical graph distances in this regime are widely open: we only know a general logarithmic lower bound from \cite[Theorem 5.5]{DeiHof13},  and that distances are  linear when $\gamma>2, \al>2d$, see \cite[Theorem 8 (b2)]{DepWut15}. When $\al\in (d, 2d)$, the precise order is unknown, and so is the constant prefactor when $\al>2d$.  It would be interesting to see whether the BRW upper bound is a close approximation in the $\al\in (d,2d)$ regime and whether distances are truly logarithmic. 
These latter questions are similar in flavour to \emph{first passage percolation on long-range percolation}, when edges of the long-range percolation model have exponential lengths \cite{ChaDey16}. Due to the lack of power-law degrees, explosion is impossible in that model. Graph distances in supercritical long-range percolation are investigated in \cite{Bisk11, BisLin17}.

\section{Dominating branching random walks}\label{s:explore-BP}
In this short section we set the preliminaries to the lower bound of the proof of Theorem \ref{thm:shape} and Theorem \ref{thm:dist-cons},  \eqref{no-integral-nonexplosion} and the lower bound for \eqref{eq:non-explosive-mn} in Theorem \ref{thm:fpp-profile}.
The crucial ingredient of these proofs is the exploration of the neighborhood of a vertex $x$ in the order that corresponds to the $L$-distance, that is an interpretation of Dijkstra's algorithm, and we call it \emph{first passage exploration} (FPE). Importantly, Theorem \ref{thm:thinned-BRW} below describes a three-process coupling that couples the exploration on the $\SFPWL$ to two dominating  (BRW) in $\Z^d$. First we describe the two BRWs, one with Poissonian and one with sum of Bernoulli offspring distributions, denoted by $\PoiBRW$ and $\BerBRW$, respectively.

 In the skeleton\footnote{The skeleton of any BRW is just a random branching process tree, without being embedded in $\Z^d$.} of the BRWs, we name the individuals in the Harris-Ulam manner. That is, we order the children of the same individual in a way that corresponds their birth-order and call the root $\emptyset$, the children of the root $1, 2, \dots$, the second generation as $11,12, \dots, 21, 22, \dots$ In general, $i_1i_2\cdots i_k$ stands for the individual that is the $i_k$th child of the $i_{k-1}$th child $\dots$ of the $i_1$st child of the root. We refer to this coding as the \emph{name} of the individuals. 
We write $p(z)$ for the parent of an individual $z$. Naturally, $p(i_1\cdots i_k)=i_1\cdots i_{k-1}$. We set $p(\emptyset):=\emptyset$. In both BRWs, each individual $x$ has a vertex-weight $W_x$ (a type) that depends on her location\footnote{Following Jagers and Nerman \cite{JagNer84, JagNer96}, we use the female pronoun to refer to individuals.}. Note that the environment $(W_z)_{z\in \Z^d}$ stays the same. In this respect the BRWs are BRWs in random environment.

To initialise, we draw a collection of i.i.d. vertex-weights $(W_z)_{z\in \Z^d}$ from distribution $W$ that we call the environment. We set the location of the root at $0$, its name $\emptyset$ and give her vertex-weight $W_0$.  Conditioned on their vertex-weights and location, individuals reproduce \emph{independently}. Consider an individual with name $a$, located at $M_a\in \Z^d$ with vertex-weight $W_{M_a}$.
Conditionally on $M_a$ and $(W_z)_{z\in \Z^d}$,  her children in $\PoiBRW$ are described as follows. First, for all $y\in \Z^d$ draw a conditionally independent variable 
\be\label{eq:poi-vector} 
\quad 
N_y^{\mathrm{P}}(a) \sim \mathrm{Poi}\left( \la W_{M_a} \frac{W_y}{\|y-M_a\|^\al}\right), \ee
and allocate $N_y^{\mathrm{P}}(a)$ many children to the location $y\in \Z^d$. Each of these children have vertex-weight $W_y$ but note that each of them reproduces independently again.
 Since the sum of Poisson rvs is Poisson again, the total number of children via this method is distributed as
\be\label{eq:degree-dist-1} D_a^{\mathrm{P}} = \mathrm{Poi}\Bigg( \la W_{M_a}\!\cdot\!\!\! \sum_{y\in \Z^d\setminus \{M_a\}} \frac{W_y}{\|y-M_a\|^\al}\Bigg). \ee
We additionally add $2d$ children, each of them located at one of the $2d$ nearest-neighbors of $M_a$. 

We describe $\BerBRW$ coupled to $\PoiBRW$: here, we allow at most one edge to each location $y\in \Z^d$, i.e., for the individual $a$ located at $M_a$ in $\BerBRW$, we allocate $N_y^{\mathrm{B}}(a):=\ind\{N_y^{\mathrm{P}}(a)\ge 1\}$ many children to location $y\in \Z^d$. Again, we additionally add $2d$ children, each of them located at one of the $2d$ nearest-neighbors of $M_a$. When there are multiple edges to a nearest-neighbor vertex, we keep the added one. Note that 
\begin{equation}\label{eq:Ber-prob}
\Pv\left(N_y^{\mathrm{B}}(a)=1 \mid M_a, (W_z)_{z\in \Z^d}\right)= 1-\exp\left\{ \la W_{M_a} \frac{W_y}{\|y-M_a\|^\al}\right\},	
\end{equation} so the number of children of $a$ to $y\in \Z^d$ equals the probability that  the edge $(M_a,y)$ is present in SFP, see \eqref{eq:connect}. 
 Another way to realize the vector in \eqref{eq:poi-vector} is to first draw $D_a^{\mathrm{P}}$ in \eqref{eq:degree-dist-1} and then apply a multinomial thinning, that is, we choose $D_a^{\mathrm{P}}$ many  \emph{locations} (or marks) $M_{a1}, M_{a2}, \dots, M_{aD_a^{\mathrm{P}}} \in \Z^d\setminus \{M_a\}$ in an i.i.d.\ fashion, 
each of them having the conditional distribution
\be\label{eq:mark-choose} \Pv\left( M_{a1} =y \mid M_a, (W_z)_{z\in \Z^d}\right)= \frac{W_y/\|y-M_a\|^\al}{\sum_{z\in \Z^d\setminus\{M_a\}} W_z/\|z-M_a\|^\al }.\ee
Since a multinomial thinning of a Poisson variable yields \emph{independent} Poisson variables, we obtain \eqref{eq:poi-vector} back again.   

 Once the two BRWs are generated in a coupled manner,  we assign i.i.d.\ edge-weights from distribution $L$ to each existing edge\footnote{Edge here means parent-child relationship.} in the two BRWs in a coupled way, i.e., whenever $N^{\mathrm{B}}_y(a)=1$ for an individual $a\in\BerBRW$, we use an edge-weight chosen uniformly from the $N^{\mathrm{P}}_y(a)\ge1$ many edge-weights between $a$ and its children at $y$ in $\PoiBRW$. 
 
 Let us denote by $\CB_t^{L,\mathrm{P}}(0), \CB_t^{L,\mathrm{B}}(0)$ the graph spanned by the vertices reachable within $L$-distance $t$ in $\PoiBRW, \BerBRW$, respectively, including the edge-weights $L$ on the edges and the vertex-weights. 
Let us write $\CB_t^{L,S}(0)\subseteq \CB_t^{L}(0)$ for the tree \emph{formed by the edges that are on shortest-length paths to $0$} in $\SFPWL$, including edge-weights, vertices and their vertex-weights. The vertex set of $\CB_t^{L,S}(0),\CB_t^{L}(0)$ are the same, only edges that are not on shortest paths toward zero are not present in $\CB_t^{L,S}(0)$.

\begin{theorem}\label{thm:thinned-BRW}
Consider the shortest-path tree $\CB^{L, \mathrm{S}}_t(0)$
 in $\SFPWL$. There is a three-process coupling of the above described $\PoiBRW, \BerBRW$  to $\SFPWL$ such that for any $t\ge 0$, $\CB^{L, \mathrm{S}}_t(0)$ in $\SFPWL$ can be obtained as a subgraph of   $\CB_t^{L,\mathrm{B}}(0)$. 
More precisely, under the coupling,
\[ \CB_t^{L, \mathrm{S}}(0) \subseteq \CB_t^{L,\mathrm{B}}(0) \subseteq   \CB_t^{L,\mathrm{P}}(0).\]
\end{theorem}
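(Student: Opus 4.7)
The easier inclusion $\CB_t^{L,\mathrm B}(0)\subseteq\CB_t^{L,\mathrm P}(0)$ is essentially free: since $N^{\mathrm B}_y(a)=\ind\{N^{\mathrm P}_y(a)\ge 1\}$ and the $L$-weight on each kept BerBRW edge is chosen uniformly from those of the corresponding PoiBRW multi-edges, BerBRW is, by construction, a labelled $L$-weighted rooted sub-tree of PoiBRW, and $L$-reachability from the root transfers.

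For the harder inclusion $\CB^{L,\mathrm S}_t(0)\subseteq\CB_t^{L,\mathrm B}(0)$, the plan is to realise $\SFPWL$ first, read off its Dijkstra shortest-path tree rooted at $0$, and then generate BerBRW on top so that this tree is \emph{literally embedded} as a sub-tree. Share the vertex-weights $(W_z)_{z\in\Z^d}$ across the models and write $q(x,y):=1-\exp(-\lambda W_xW_y/\|x-y\|^\alpha)$. First draw independent SFP edge-indicators $E_{x,y}\sim\mathrm{Bernoulli}(q(x,y))$ and edge-weights $L_{x,y}\sim L$ for each pair, together with i.i.d.\ $L$-weights on the always-present nearest-neighbor edges; run Dijkstra from $0$ on the resulting graph to enumerate the SFP-cluster of $0$ as $v_1=0,v_2,v_3,\ldots$ in order of increasing $d^{\mathrm S}(v_k)$, with Dijkstra parents $\pi(k)$. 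Then embed inductively: set $\phi(v_1):=\emptyset$, and for $k\ge 2$ declare $\phi(v_k)$ to be the BerBRW child of $\phi(v_{\pi(k)})$ at location $v_k$ whose child-indicator $N^{\mathrm B}_{v_k}(\phi(v_{\pi(k)}))$ is identified with $E_{v_{\pi(k)},v_k}$ (hence $=1$, so this child exists) and whose $L$-weight is identified with $L_{v_{\pi(k)},v_k}$. More generally, for each $k$ and each $y\in\Z^d$, couple $N^{\mathrm B}_y(\phi(v_k)):=E_{v_k,y}$ with edge-weight $L_{v_k,y}$ on the BerBRW edge when the Bernoulli fires, and identify the nearest-neighbor $L$-weights of $\phi(v_k)$ with those of SFP. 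Every other BerBRW individual (those outside the image of $\phi$) would be completed by drawing all its child-indicators and edge-weights as fresh independent variables with the required marginal laws.

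To see that this preserves the marginals of SFP and BerBRW, note that the identifications above pair distinct SFP Bernoullis with distinct BerBRW slots $(\phi(v_k),y)$, distinct because the locations $\{v_k\}$ are pairwise distinct (Dijkstra visits each vertex at most once); independence of the $E_{x,y}$ across pairs therefore delivers the required independence of BerBRW offspring across both individuals and destinations. Since BerBRW is a tree, $d^{\mathrm B}(\phi(v_k))$ equals the $L$-weight of the unique BRW path from $\emptyset$ to $\phi(v_k)$, which by construction telescopes to $\sum_{j\ge 1}L_{v_{\pi^j(k)},v_{\pi^{j-1}(k)}}=d^{\mathrm S}(v_k)$. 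Hence $v_k\in\CB^{L,\mathrm S}_t(0)$ implies $\phi(v_k)\in\CB_t^{L,\mathrm B}(0)$ and each Dijkstra-tree edge maps to a BerBRW parent–child edge of equal $L$-weight, so $\phi$ realises $\CB^{L,\mathrm S}_t(0)$ as an $L$-weighted sub-tree of $\CB_t^{L,\mathrm B}(0)$, completing the chain stated in the theorem.

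The hard part will be verifying that these Bernoulli identifications neither over-specify any BerBRW slot nor destroy the required independence structure. This reduces to three checks: (i) distinctness of the Dijkstra image $\{v_k\}$, which ensures each BerBRW slot $(\phi(v_k),y)$ is assigned at most once; (ii) mutual independence of $\{E_{x,y}\}$ across pairs, which translates into BerBRW offspring being independent across both individuals and destinations; and (iii) a.s.\ local finiteness — SFP vertices have a.s.\ finitely many incident edges by the assumption $\alpha>d$ combined with $\Ev[W]<\infty$ (conditional on $W$'s, the expected degree is finite), and Poisson offspring totals in PoiBRW are a.s.\ finite — so that the Dijkstra enumeration terminates ordinally and the off-image generation of BerBRW is pointwise well-defined.
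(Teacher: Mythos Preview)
Your coupling breaks the $\BerBRW$ marginal. SFP edge-indicators are indexed by \emph{unordered} pairs, so $E_{v_j,v_k}=E_{v_k,v_j}$ is a single Bernoulli. Under your rule $N^{\mathrm B}_y(\phi(v_k)):=E_{v_k,y}$ for \emph{all} $y$, whenever $v_j,v_k$ both lie in the Dijkstra image you assign that single Bernoulli to \emph{two} different BerBRW slots, $(\phi(v_j),v_k)$ and $(\phi(v_k),v_j)$. In $\BerBRW$ these two child-indicators must be independent (individuals reproduce independently conditionally on the environment), but under your coupling they are equal. The same double-use afflicts the edge-weights $L_{v_j,v_k}$ and the nearest-neighbor $L$-weights. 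Your check (i) confirms that distinct BerBRW slots receive an assignment, but the map from slots to SFP pairs is not injective, so check (ii) does not go through. A secondary slip: you invoke $\Ev[W]<\infty$, which is not assumed when $\tau\in(1,2)$; almost-sure finiteness of degrees comes instead from $\gamma>1$.

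The paper avoids this by reversing the direction of the coupling: it generates $\PoiBRW$ and $\BerBRW$ with \emph{fresh} independent offspring variables (so those marginals are correct by fiat), and then applies two successive thinnings---removing multi-edges, then removing individuals at locations already visited in the SFP-labelled exploration---to recover the shortest-path tree. The substance of the proof is then verifying that the thinned output has the law of $\CB_t^{L,\mathrm S}(0)$. This reduces to the observation that Dijkstra on $\SFPWL$ is a measurable function of the \emph{forward} revelations, i.e.\ the edge-indicators and $L$-weights from each newly explored site to sites not yet explored, and that these forward revelations are i.i.d.\ Bernoulli/$L$ in both $\SFPWL$ and the BerBRW exploration. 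Your idea can be salvaged along these lines by coupling only forward slots $(\phi(v_k),y)$ with $y\notin\{v_1,\dots,v_{k-1}\}$ to $E_{v_k,y}$ and drawing the backward slots fresh; but carrying out the measurability and independence check under the exploration filtration is precisely the work the paper's step-by-step construction does.
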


We provide the three-process coupling and the proof of Theorem \ref{thm:thinned-BRW} in Section \ref{s:explore-proof}. 
Let us make a remark. In the Norros-Reitu (a similar, non-spatial) model the exploration of the cluster of a vertex can be coupled to a \emph{multitype} branching process  instead of a BP in a \emph{random environment}. In other words, the whole collection of vertex-weights can be newly drawn to determine the children of a newly explored individual. In the scale-free percolation model, this is not possible for the following reason. The information that a vertex close to the origin is not explored for many steps of the exploration reveals some information on the vertex-weight of the given vertex (i.e., it is most likely very small). This phenomenon is not present in the non-spatial model.
\section{Non-explosion for $\gamma>2$}\label{s:gamma>2}
In this section we prove that the scale-free percolation model can never explode when $\gamma>2$ and $F_L(0)=0$. That is, we prove Theorem \ref{thm:shape}.
Recall that we write $m_x=\Ev[W^x]$ for $x\in \R$. The proof of Theorem \ref{thm:shape} is based on the following lemma: 
 \begin{lemma}\label{lem:age-dep}
 Consider $\BerBRW$ as descibed in Section \ref{s:explore-BP}. 
Then, the expected generation sizes $Z_n^{\mathrm{B}}$ of  $\mathrm{BerBRW}$ grow exponentially. Namely, for all $n$,  
\be \label{lem:exp-growth-2} m_+^{-n}\Ev[ Z_n^{\mathrm{B}} \mid W_{\emptyset}=w] \le   w^{d/\al} m_{d/\al}/m_{2d/\al}.  \ee
for $m_+ =  c_{d,\la} \Ev[W^{2d/\al}]$, with $c_{d,\la}$ as in \eqref{eq:cdla-def}. \end{lemma}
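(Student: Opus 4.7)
My plan is to prove the lemma by induction on $n$, with a single-step lattice integral estimate as the engine.

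For the base case $n=1$, I compute $\Ev[Z_1^{\mathrm{B}}\mid W_{\emptyset}=w]$ directly from the definition of $\BerBRW$. The root has the $2d$ deterministic nearest-neighbour children plus, for every $y$ with $\|y\|>1$, a Bernoulli child of parameter $1-\e^{-\lambda w W_y/\|y\|^\alpha}$. Using $1-\e^{-x}\le\min(1,x)$ and splitting the $y$-sum into a \emph{saturated} part (where $\lambda w W_y\ge\|y\|^\alpha$ and the $\min$ saturates at $1$, controlled by the volume of a Euclidean ball of radius $(\lambda w W_y)^{1/\alpha}$, giving the $\mathrm{Vol}_d$ factor) and a \emph{tail} part (where the definition of $\pi_d$ in \eqref{eq:cdla-def} gives the $\pi_d$ factor), I obtain the one-line lattice estimate
\[
\sum_{y:\|y\|>1}\min\Bigl(1,\tfrac{\lambda w v}{\|y\|^\alpha}\Bigr)\le (\mathrm{Vol}_d+\pi_d)(\lambda w v)^{d/\alpha}.
\]
Taking expectation over $v=W_y$ and adding the $2d$ nearest-neighbour contribution gives $\Ev[Z_1^{\mathrm{B}}\mid W_{\emptyset}=w]\le 2d+\lambda^{d/\alpha}(\mathrm{Vol}_d+\pi_d)\,m_{d/\alpha}\,w^{d/\alpha}$, which, using $w^{d/\alpha}\ge 1$ and $m_{d/\alpha}\ge 1$ (both consequences of $W\ge 1$) together with the definition of $c_{d,\lambda}$, simplifies to $c_{d,\lambda}\,m_{d/\alpha}\,w^{d/\alpha}=m_+\cdot (m_{d/\alpha}/m_{2d/\alpha})\,w^{d/\alpha}$, which is the claim for $n=1$.

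For the inductive step, writing $\mu_n(v):=\Ev[Z_n^{\mathrm{B}}\mid W_{\emptyset}=v]$ and conditioning on the first generation, the branching property together with the conditional independence of reproduction given the environment and the translation invariance of the iid environment yields
\[
\Ev[Z_{n+1}^{\mathrm{B}}\mid W_{\emptyset}=w]\le \sum_{y\in\Z^d}\Ev\!\left[\bigl(1-\e^{-\lambda w W_y/\|y\|^\alpha}\bigr)\,\mu_n(W_y)\right].
\]
Inserting the inductive bound $\mu_n(v)\le m_+^n(m_{d/\alpha}/m_{2d/\alpha})v^{d/\alpha}$ and then repeating the base-case split-and-integrate calculation with the extra factor $W_y^{d/\alpha}$ under the expectation, the change of variables $y=(\lambda w)^{1/\alpha}z$ promotes the weight moment $m_{d/\alpha}$ to $m_{2d/\alpha}$:
\[
\sum_y\Ev\!\left[\bigl(1-\e^{-\lambda w W_y/\|y\|^\alpha}\bigr)W_y^{d/\alpha}\right]\le 2d\,m_{d/\alpha}+\lambda^{d/\alpha}(\mathrm{Vol}_d+\pi_d)m_{2d/\alpha}w^{d/\alpha}\le c_{d,\lambda}\,m_{2d/\alpha}\,w^{d/\alpha}=m_+\,w^{d/\alpha},
\]
which closes the induction to $\Ev[Z_{n+1}^{\mathrm{B}}\mid W_{\emptyset}=w]\le m_+^{n+1}(m_{d/\alpha}/m_{2d/\alpha})w^{d/\alpha}$.

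The main obstacle is the first displayed inequality of the inductive step: the subtree of $\BerBRW$ rooted at a first-generation child at $y$ lives in the \emph{same} environment as the outer process, pinned at $W_0=w$, and is therefore not literally an independent copy of $\BerBRW$ rooted at $y$ with root weight $W_y$. For the purposes of a first-moment estimate this is harmless, as can be seen from a many-to-one identity: $\mu_n(w)$ equals a sum over spatial paths of the expected product of one-step connection probabilities, and paths whose vertices are all distinct (the dominant contribution under $\gamma>2$, where the long-range kernel decays quickly) factor exactly as iterated integrals matching the transfer operator generated by the one-step kernel. Bounding the contribution of the remaining paths that revisit the origin — and therefore couple the subtree expectation to the pinned weight $w$ — via the crude $p\le 1$ together with the rapid decay in $\|y\|$ of $1-\e^{-\lambda w W_y/\|y\|^\alpha}$ shows that these can be absorbed into the $m_+$ factor, which is the most delicate and technical step of the argument.
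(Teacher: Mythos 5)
Your computation is, in substance, the paper's own proof. The base-case estimate $\sum_{y}\min\bigl(1,\la w v/\|y\|^\al\bigr)\le(\mathrm{Vol}_d+\pi_d)(\la wv)^{d/\al}$, split into a saturated ball and a $\pi_d$-tail, is exactly the bound \eqref{eq:mu-ber} on the expected reproduction kernel, and your inductive step --- where the extra factor $W_y^{d/\al}$ under the expectation promotes $m_{d/\al}$ to $m_{2d/\al}$ --- is precisely the rank-one factorization $\mu_+^{\ast n}(w,\mathrm dv)=w^{d/\al}v^{d/\al}F_W(\mathrm dv)\,(c_{d,\la})^n m_{2d/\al}^{\,n-1}$ of \eqref{eq:mu-convolv}, unrolled as an induction instead of written as a composition power. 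The arithmetic closes correctly: $m_+\,m_{d/\al}/m_{2d/\al}=c_{d,\la}m_{d/\al}$, and the $2d$ nearest-neighbour term is absorbed via $\Pv(W\ge1)=1$, matching \eqref{eq:cdla-def}.

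The one genuinely delicate point is the first display of your inductive step, and you are right to isolate it; but your proposed repair is not complete as sketched. In the many-to-one expansion, the iterated-integral (transfer-operator) factorization fails for \emph{every} path that revisits \emph{any} site, not only for paths returning to the origin: a repeated site $z\ne 0$ makes $W_z$ appear in two non-adjacent factors of the product, so the expectation over the environment no longer telescopes into the one-step kernel, and replacing the repeated weight by a fresh draw is not an upper bound (the connection probabilities are increasing in the weights). Your sketch addresses only origin-revisits, and the assertion that the remaining paths ``can be absorbed into the $m_+$ factor'' via $p\le 1$ is exactly the statement that needs proof --- discarding factors leaves a sub-product over steps that need not form a path, so it does not reduce to the kernel estimate. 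To be fair, the paper dispatches the same point in a single sentence (``we can bound the expected number \dots by simply applying the composition operator'' preceding \eqref{eq:mu-convolv}), so this is a shared soft spot rather than a deviation from the source; but if the goal is a self-contained proof, this branching-in-a-fixed-environment inequality is the step that still requires an actual argument, e.g.\ a careful treatment of non-self-avoiding paths, rather than the two sentences you and the paper both devote to it.
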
 
 \begin{proof}
 Recall the definition of the edge probabilities in $\BerBRW$ from \eqref{eq:Ber-prob}, and that nearest neighbor edges are, additionally, always present. So, let us write $\mathcal{N}:=\{ y: \| y\|=1 \}$ for the nearest neighbors of the origin.
For a set $\CA\subset \R^+$, $N_{w}^{\mathrm{B}}(\CA)$, called the \emph{reproduction kernel}, denotes the number of children with vertex-weight in the set $\CA$ of an individual with vertex-weight $w$ located at the origin. In case of the $\BerBRW$, the distributional identity holds: 
\begin{equation}\label{eq:kernel-1}
	N_{w}^{\mathrm{B}}(\mathrm d v) \ {\buildrel d \over =}   \sum_{y\in \Z^d} \ind_{\{W_y\in (v, v+\mathrm dv)\}}  \left( \ind_{\{y \in \mathcal{N}\}} + \ind_{\{y\not\in \mathcal{N}\}} \mathrm{Ber}(1-\e^{-\la w v/\|y\|^\al}) \right).
\end{equation}
The \emph{expected reproduction kernel} (cf.\ \cite[Section 5]{Jagers89}) is defined as $\mu(w, \mathrm dv):=  \Ev[N_w^{\mathrm{B}}(\mathrm dv)]$. Using that $1-\exp\{-\la w v/\|y\|^\al \}\le \min\{1, \la w v/\|y\|^\al\}$, it can be bounded from above as
 \be\label{eq:mu-ber} \ba \mu(w,\mathrm dv )
 &\le F_W(\mathrm dv) \Bigg(\sum_{
\substack{y: \|y\|\le (\la wv)^{1/\al} \\ \text{or } y \in \CN}}  1 + \la vw \sum_{y: \|y\|\ge (\la wv)^{1/\al}} \|y\|^{-\al}\Bigg)\\
 &=: c_{d,\la}(w,v) w^{d/\al} v^{d/\al} F_W(\mathrm dv),\ea\ee
 where $c_{d, \la}(w,v)\le c_{d,\la}=2(d+\la \mathrm{Vol}_d+ \la^{1/\al} \pi_d)$  in \eqref{eq:cdla-def}. To obtain the second line, we bound the sum on the rhs in \eqref{eq:mu-ber} by approximating the two sums by two integrals.  Then, $\mathrm{Vol}_d$ denotes the volume of the unit ball in $\Z^d$,  while $\pi_d$, used when bounding the second integral, is defined as $\pi_d:= \sup_{x\ge 0}\{ \int_{y:\|y\|\ge x} 1/\|y\|^\al \mathrm d^dy/ x^{d-\al}\}$. The factor $2$ is a crude upper bound on the approximation between the integrals and the  sums.
 Finally, the term $2d$ comes from the contribution of nearest neighbors, where $w^{d/\al}v^{d/\al}>1$ holds by the assumption that $\Pv(W\ge 1)=1$, see after \eqref{eq:tail-W}.  
Defining 
\be\label{eq:mupm}\mu_{+}(w, \mathrm dv):= c_{d,\la} w^{d/\al} v^{d/\al} F_W(\mathrm dv),\ee we obtain an upper bound 
$  \mu(w,\mathrm dv) \le \mu_+(w, \mathrm dv)$ \emph{uniform in} $v,w$.
Observe that $\BerBRW$ without the edge-weights is a branching process (BP), where each individuals have a `type' being its vertex-weight. It is not a multi-type BP in the usual sense, since the environment causes dependencies between the types of children of different individuals, while in a multi-type BP those types should be independent. Nevertheless, we can bound the \emph{expected} number of individuals with given type in generation $n$ by simply applying  the composition operator $\ast$ acting on the type space, that is, define 
\[ \mu^{\ast n}(w, \CA)=\int_{\R^+} \mu(v, \CA) \mu^{\ast (n-1)}(w, \mathrm dv), \qquad  \mu^{\ast1}:=\mu. \]
For instance, $\mu^{\ast2}$ counts the expected number of individuals with type in $\CA$ in the second generation, averaged over the environment.  
The rank-1 nature of the kernel $\mu_+$ implies that its composition powers factorize. An elementary calculation using \eqref{eq:mupm} shows that 
\be\label{eq:mu-convolv}\mu_+^{\ast n}(w, \mathrm dv) =w^{d/\al} \cdot  v^{\mathrm d/\al }F_W(\mathrm dv) \cdot (c_{d,\la})^n \Ev[W^{2d/\al}]^{n-1}. \ee
Thus,
\[ \ba \Ev[Z_n^{\mathrm{B}} \mid W_\emptyset=w]&=\mu^{\ast n}(w, \R^+) \le \mu^{\ast n}_+(w, \R^+) \\
&= w^{d/\al} \int_{v\in \R^+}v^{\mathrm d/\al }F_W(\mathrm dv) \cdot (c_{d,\la})^n \Ev[W^{2d/\al}]^{n-1}\\
&=w^{d/\al} (m_+)^{n} m_{d/\al} / m_{2d/\al}.  \ea\]
\end{proof}
We are ready to prove Theorem \ref{thm:shape}:
\begin{proof}[Proof of Theorem \ref{thm:shape}]
We know from \cite[Theorem 5.5]{DeiHof13} that when $\gamma>2$, for a fixed $x\in \Z^d$ with $\|x\|$ large enough, $\lim_{\|x\|\to \infty}\Pv(d_G(0,x)\ge \eta \log \|x\|)=1$ for some $\eta>0$. We improve this theorem and show that this holds \emph{for all} $x\in \Z^d$ with the same norm simultaneously. In other words, 
\be\label{eq:graph-contain} B^G_n(0) \subset B^2_{\exp\{ C n\}}(0), \ee for some $C>0$ for all sufficiently large $n$.  
Indeed, quoting the second formula after \cite[(5.16)]{DeiHof13}, for some constants $C', \kappa>0$, and $x\wedge y:=\min(x,y)$,
\[ \Pv(d_G(0, x)=i) \le i \Big( C' \la^{1\wedge(\tau-1)/2}\Big)^i (\log \| x\|)^\kappa \| x\|^{-\al (1\wedge(\tau-1)/2)}. \]
First we sum this formula in $i$, for $i=1,\dots, n$. Since $\sum_{i=1}^n i q^i \le  n q^{n+2}/(q-1)^2$ for $q>1$, 
 \[ \Pv(d_G(0, x)\le n) \le C_\la  n \Big( C' \la^{(\tau-1)/2\wedge 1}\Big)^n (\log \| x\|)^\kappa \| x\|^{-\al (1\wedge (\tau-1)/2)}. \]
Summing now for all $x$ with norm  $\| x\| \ge \e^{Cn}$ for some $C>0$, we obtain that
\[ \ba \Pv(\exists x\in \Z^d: \| x\|\ge \e^{Cn} &\text{ and }  d_G(0, x)\le n) \\
&\le C_\la n \big( C' \la^{1\wedge(\tau-1)/2}\big)^n \sum_{x: \| x\|\ge \e^{Cn} } (\log \| x\|)^\kappa \| x\|^{-\al (1\wedge(\tau-1)/2 )}\\
&\le  \big( C' \la^{1\wedge(\tau-1)/2}\big)^n C_\la n  C_d (\log \e^{Cn})^\kappa (\e^{Cn})^{d-\al (1\wedge(\tau-1)/2)}\\
 \ea \]
for some $d$-dependent constant $C_d$, where we have used \eqref{eq:karamata} to obtain the last line. Note that $d -\al\big(\frac{\tau-1}{2}\wedge 1\big)=d(1-\frac{\gamma}{2}\wedge \frac{\al}{d})<0$ for $\gamma>2$ and $\al>d$.
Thus we can choose $C$ large enough so that the product of the first and last factor on the rhs is $<1/2^n$, say, and then we arrive at
\[   \Pv(A_n):=\Pv(\exists x\in \Z^d: \| x\| \ge  \e^{Cn} \text{ and } d_G(0, x)\le n) \le C_d' n^{\kappa+1} \frac1{2^n},  \]
where we have combined all constants into $C_d'>0$.
Since the rhs is summable in $n$, by the Borel-Cantelli lemma, almost surely only a finitely many $A_n$s occur. Let $n_1$ denote the first index so that  $A_n^c$ holds for all $n>n_1$. That is, 
\[ A_n^c=\{  \forall x\in \Z^d \text{ with } \|x\|\ge \e^{Cn}: d_G(0, x) >n \} = \{ B_n^{G}(0)\subset B^2_{\exp\{Cn\}}\}.\]
This shows \eqref{eq:graph-contain}. In what follows we show that  the following implication is true for some $\wit C, t_1\ge 0$:
\be\label{eq:implication} \forall n\ge n_1: B^G_n(0) \subset B^2_{\exp\{ C n\}}(0) \quad \Longrightarrow \quad \forall t\ge t_1: B^L_t(0) \subset B^2_{\exp\{ \wit C t\}}(0). \ee
Recall that $\Delta B_n^{G,L}(0)$ denotes those vertices in $\SFPWL$ from which the shortest path to the origin uses $n$ edges, and that Proposition \ref{lemma:thinned-BRW} states that the shortest path tree $\CB^{L, \mathrm{S}}_t(0)$ is also present in BerBRW. 
This implies that $|\Delta B_n^{G,L}(0)| \le 
Z_n^{\mathrm{B}}$, the size of generation $n$ in $\BerBRW$.
Let $m_+$ be as in Lemma \ref{lem:age-dep}, and let $t_0$ be so small that $F_L(t_0)< 1/(16 m_+^2) $. Set $n:=\lfloor 2t/t_0 \rfloor+1$ and $\wih C=2C/t_0+C$. By \eqref{eq:graph-contain}, for all $n\ge n_1$,   $B^G_n(0) \subseteq B^2_{\exp\{C n\}}(0) \subseteq B^2_{\exp\{ \wih C t\}}(0)$ since $Cn \le   \wih C t$. So,  $\{B_t(0)\not \subset B^2_{\exp\{ \wih C t\}}(0)\}\subseteq\{B_t(0) \not \subset B^G_{n}(0)\}$, where the latter event is
\[ \exists y \in \Z^d: d_G(0, y)> n \ \& \ d_L(0, y)\le t.  \]
Note that $d_G(0,y)> n$ means that  the $L$-shortest path from $y$ to $0$ uses more than $n$ edges. 
Following the shortest path from any such $y$ to $0$ we can also find a $y'\in \Delta B_n^{G,L}(0)$ with  $d_L(0, y')\le t$. 
By Proposition \ref{lemma:thinned-BRW}, $\Delta B_n^{G,L}(0) \subseteq \CG_n^{\mathrm{B}}$, and thus a union bound results in
\be\label{eq:estimate-1}
\Pv(B_t^L(0)\not \subset B^2_{\exp\{ \wih C t\}}(0)) \le \Pv(\exists y' \in \Delta B_n^{G,L}(0): d_L(0, y')\le t )
\le \Ev[Z_n^{\mathrm B}] F_L^{\ast n}(t),
\ee
where $F_L^{\ast n}(t)$ is the distribution function of the $n$-fold convolution of $L$ with itself, 
since the edge-weights are i.i.d.\ copies of $L$.
Since the vertex-weight of the root is i.i.d.\ $W$ from \eqref{eq:tail-W}, by Lemma \ref{lem:age-dep},
\be\label{eq:znb-333} \Ev[Z_n^{\mathrm{B}}]=\Ev[ \Ev[ Z_n^{\mathrm{B}}\mid W_\emptyset=W] ]\le m_{+}^n\Ev[W^{d/\al}] m_{d/\al}/ m_{2d/\al}=m_+^n (m_{d/\al})^2/m_{2d/\al}.\ee
Now we bound  
 $F_L^{\ast,n}(t)$. If more than $n/2$ variables in the sum $L_1+\dots +L_n$ would have length at least $t_0$, then the sum exceeds $n t_0 /2\ge t$. Hence at least $n/2$ variables have value at most $t_0$.
 Thus, 
\be\label{eq:FL-convolution}  F_L^{\ast,n}(t)\le F_L^{\ast, n}(n t_0 /2) \le {n \choose n/2}(F_L(t_0))^{n/2} \le 2^n \left( \frac{1}{16m_+^2}\right)^{n/2}=\frac1{2^n m_+^n}.  \ee
Combining \eqref{eq:znb-333} with \eqref{eq:FL-convolution}, the rhs of \eqref{eq:estimate-1} is at most 
\be\label{eq:pak}
\Pv(B_t^L(0)\not \subset B^2_{\exp\{ \wih C t\}}(0)) \le m_{d/\al} m_+^n /(2^n m_+^n)=m_{d/\al} 2^{-n}.
\ee
Define the event $\wit A_k:=\{ B_k^L(0) \not \subseteq B^2_{\exp\{ \wih C k\}}(0)\}$. By \eqref{eq:pak}, $\Pv(\wit A_k)$ is summable in $k$, thus, by  the Borel-Cantelli lemma, there exists a random $k_1\ge n_1$, such that $A_k^c$ holds a.s.\ for all $k>k_1$. Consider now a $t\in (k-1, k)$ for some $k\ge k_1+1$.
Then, due to the monotonicity of $B_t^L(0)$ in $t$, on the event $A_k^c=\{  B_k^L(0) \subseteq B^2_{\exp\{ \wih C k\}}(0)\}$, 
\[ B_t^{L}(0) \subseteq B_{k}^L(0)\subseteq B^2_{\exp\{\wih C k\}}. \]
Clearly $\e^{\wih C k} \le \e^{\wih C} \e^{\wih C t }\le \e^{2\wih C t}$ for all $t\in (k-1, k)$ and $t\ge 1$. Thus, \eqref{eq:implication} follows with $\wit C=2\wih C=4C/t_0$ and $t_1=k_1$.
This finishes the proof of \eqref{eq:shape-1}. 
\end{proof}

\section{Boxing and greedy paths}\label{s:prelim}
In this section we develop the main ingredient for proving upper bounds -- a boxing method combined with greedy minimum-length paths.  We begin with some definitions.
\begin{definition}[Power-law tail behavior] We say that the random variable has regularly-varying tail with power-law exponent $\al\in (0,1)$ if there exists a $K>0$ and a function $\ell(\cdot)$ that varies slowly
 at infinity such that for all $x\ge K$
\be\label{eq:power-law-alpha} \Pv\left(X \ge x \right)=\ell(x)/ x^\al.\ee
\end{definition}
We shall often use Potter's theorem about slowly-varying functions. For all $\ve>0$,
\be \label{eq:slowly-1}\lim_{x\to \infty} x^\ve \ell(x) = \infty, \qquad
\lim_{x\to\infty} x^{-\ve} \ell(x) = 0.
	\ee
We shall use the following Karamata-type theorem \cite[Propositions 1.5.8, 1.5.10]{BinGol}: For any $\beta> 0$, and $a>0$,
\begin{equation}\label{eq:karamata}
	\lim_{u\to\infty}\frac{1}{ u^{-\beta} \ell(u)} \int_{u}^\infty \ell(x) x^{-\beta-1} \mathrm dx = \lim_{u\to\infty}\frac{1}{ u^{\beta} \ell(u)}\int_{a}^u \ell(x) x^{\beta-1} \mathrm dx =\frac{1}{\beta}. 
\end{equation}

We continue with the notion of \emph{min-summability}, a powerful tool to analyse the explosion of age-dependent BPs, a criterion developed in \cite{AmiDev13}. We cite \cite[Corollary 4.3]{AmiDev13} after this definition:
\begin{definition}Let $(a_k)_{k\in \N}$ be a sequence of integers with $\lim_{k\to\infty}a_k= \infty$ and $(L_{i,j})_{i,j\in \N}$ be i.i.d.\ copies of $L$.The distribution $F_L$ is called $a_k$-summable if almost surely
\[\sum_{k=1}^\infty \min\{L_{k,1}, L_{k,2}, \dots, L_{k, a_k}  \} <\infty.\]
\end{definition}
The following lemma is a rewrite of \cite[Corollary 4.3]{AmiDev13}.
\begin{lemma}[Min-summability criterion, \cite{AmiDev13}]\label{thm:amidev2}
Given a sequence $(a_k)_{k\in \N}$ with $a_k \ge c a_{k-1}$ for all sufficiently large $k$ and some $c>1$, the distribution $F_L$ is $a_k$-summable  if and only if
\be\label{eq:explosive-crit-1} \sum_{k=1}^\infty F_L^{(-1)}(1/a_k)<\infty.\ee
\end{lemma}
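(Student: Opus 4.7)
The plan is to pass from the sum of random minima to a deterministic series via Kolmogorov's theorem for sums of independent non-negative random variables, and then to compare expectations of these minima to $b_k := F_L^{(-1)}(1/a_k)$ using a dyadic decomposition of the quantile scale. Setting $(U_{k,j})_{k,j}$ to be i.i.d.\ Uniform on $(0,1)$, the quantile transform gives
\[
M_k:=\min\{L_{k,1},\dots,L_{k,a_k}\} \equalsd F_L^{(-1)}(U_k^{\min}), \qquad U_k^{\min}:=\min_{j\le a_k}U_{k,j},
\]
so $U_k^{\min}$ has a Beta$(1,a_k)$ distribution concentrated at scale $1/a_k$. The $M_k$'s are independent and non-negative, so by Kolmogorov's theorem the a.s.\ convergence of $\sum_k M_k$ is equivalent to the deterministic condition $\sum_k \Ev[M_k\wedge 1]<\infty$. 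The sequence $b_k$ is non-increasing for $k$ large, since $a_k\ge c a_{k-1}$ with $c>1$ makes $1/a_k$ eventually strictly decreasing, and $F_L^{(-1)}$ is non-decreasing.

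For the ``only if'' direction, the elementary bound $(1-1/a_k)^{a_k}\ge 1/4$ (valid for $a_k\ge 2$) together with monotonicity of $F_L^{(-1)}$ yields
\[
\Pv(M_k\ge b_k)=\Pv(U_k^{\min}\ge 1/a_k)\ge \tfrac14,
\]
so $\Ev[M_k\wedge 1]\ge \tfrac14(b_k\wedge 1)$. Hence $\sum_k \Ev[M_k\wedge 1]<\infty$ forces $\sum_k(b_k\wedge 1)<\infty$, which in turn implies $b_k\to 0$ and therefore $b_k=b_k\wedge 1$ for all but finitely many $k$, giving $\sum_k b_k<\infty$.

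For the ``if'' direction, decompose the range of $U_k^{\min}$ dyadically at scale $1/a_k$:
\[
\Ev[M_k\wedge 1]\le F_L^{(-1)}(2/a_k)+\sum_{j\ge 1}F_L^{(-1)}(2^{j+1}/a_k)\,\Pv(U_k^{\min}\ge 2^j/a_k),
\]
and estimate $\Pv(U_k^{\min}\ge 2^j/a_k)\le e^{-2^j}$. The key step is to convert each dyadic quantile $2^{j+1}/a_k$ into a reference quantile $1/a_{k-j'(j)}$: the hypothesis $a_k\ge c a_{k-1}$ gives $j'(j):=\lceil(j+1)\log 2/\log c\rceil = O(j)$ satisfying $a_{k-j'(j)}\le a_k/2^{j+1}$, so $F_L^{(-1)}(2^{j+1}/a_k)\le b_{k-j'(j)}$ by monotonicity. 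Exchanging summation,
\[
\sum_k \Ev[M_k\wedge 1]\le \sum_{j\ge 0}e^{-2^j}\sum_k b_{k-j'(j)}\le C\sum_k b_k,
\]
since a finite index shift leaves $\sum_k b_k$ unchanged up to a finite correction and $\sum_{j\ge 0}e^{-2^j}<\infty$. Hence $\sum_k b_k<\infty$ yields a.s.\ convergence of $\sum_k M_k$.

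The main technical point is matching the dyadic tail scale of $U_k^{\min}$ with the reference quantile $1/a_{k-j'(j)}$: the geometric-growth hypothesis $a_k\ge c a_{k-1}$ with $c>1$ is exactly what forces $j'(j)$ to be linear in $j$, so that each dyadic level is absorbed into a finite-index shift of the series $\sum_k b_k$ and survives the exponential decay $e^{-2^j}$. The remainder of the argument is bookkeeping.
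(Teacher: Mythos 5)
Your argument is correct and follows the same route the paper points to: the paper gives no proof of its own but cites \cite{AmiDev13} and notes the argument rests on the Kolmogorov three-series theorem, which is exactly the reduction to $\sum_k\Ev[M_k\wedge 1]<\infty$ that you carry out. The only loose ends are routine bookkeeping: in the dyadic sum one should cap $F_L^{(-1)}(2^{j+1}/a_k)$ at $1$ (the generalised inverse is $+\infty$ once its argument exceeds $1$), and the terms with $k-j'(j)$ falling outside the range where $a_k\ge c\,a_{k-1}$ holds should be bounded by $e^{-2^j}$ alone; both are absorbed by the same summability.
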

The proof is based on Kolmogorov three series theorem, we refer the reader to \cite[Corollary 4.3]{AmiDev13} for details. A consequence of this powerful lemma is that if two sequences $(a_k)_{k\ge 1}, (b_k)_{k\ge 1}$ satisfy   $a_k\le b_k$ for all sufficiently large $k\in \N$, and $F_L$ is $a_k$-summable then it is also $b_k$-summable. 
\begin{definition}[Double-exponentially growing sequence]\label{def:double-exp}
We say that a sequence $(a_k)_{k\in \N}$ grows double-exponentially if there exist constants 
$b_1, b_2>0$ and $c_2\ge c_1 >1$ such that 
\be\label{eq:double-exp}\exp\{b_1c_1^k\} \le a_k\le \exp\{b_2 c_2^k\}. \ee
\end{definition}
\begin{claim}\label{lemma:integral} For a double-exponentially growing sequence $(a_k)_{k\in \N}$, and an edge-weight distribution $F_L$, the criterion in \eqref{eq:explosive-crit-1} is equivalent to the integral criterion \eqref{eq:integral-crit}.
\end{claim}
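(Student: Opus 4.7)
The plan is to reduce both the integral $\mathbf{I}(L)$ and the series in \eqref{eq:explosive-crit-1} to a common comparison quantity via the integral test, exploiting the fact that $g(y) := F_L^{(-1)}(\exp\{-\e^y\})$ is non-increasing in $y$ (being the composition of the non-decreasing generalised inverse $F_L^{(-1)}$ with the strictly decreasing map $y\mapsto\exp\{-\e^y\}$). Note also that $g(0) = F_L^{(-1)}(1/\e) < \infty$, so $g$ is bounded on compact subsets of $[0,\infty)$, and $\mathbf{I}(L) = \int_0^\infty g(y)\,\mathrm dy$.

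First I would sandwich $1/a_k$ between two deterministic sequences of the form $\exp\{-\e^{\text{linear in }k}\}$. The double-exponential envelope $\exp\{b_1 c_1^k\} \le a_k \le \exp\{b_2 c_2^k\}$ rewrites as
\[
\exp\{-\e^{y^+_k}\} \;\le\; 1/a_k \;\le\; \exp\{-\e^{y^-_k}\},
\qquad
y^+_k := \log b_2 + k\log c_2, \quad y^-_k := \log b_1 + k\log c_1,
\]
and the monotonicity of $F_L^{(-1)}$ turns this into $g(y^+_k) \le F_L^{(-1)}(1/a_k) \le g(y^-_k)$.

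Next I apply the classical integral test to the monotone function $g$ along the arithmetic progressions $(y^\pm_k)_{k \ge K}$, whose common differences $\log c_1, \log c_2$ are strictly positive by the assumption $c_2 \ge c_1 > 1$. Taking $K$ large enough so that $y^\pm_K \ge 0$, one obtains, for $i\in\{1,2\}$,
\[
(\log c_i)\sum_{k\ge K+1} g(y^{\pm}_k) \;\le\; \int_{y^{\pm}_K}^{\infty} g(y)\,\mathrm dy \;\le\; (\log c_i)\sum_{k\ge K} g(y^{\pm}_k).
\]
The finite initial segment satisfies $\int_0^{y^\pm_K} g(y)\,\mathrm dy \le y^\pm_K\cdot g(0) < \infty$ and is therefore harmless, so each of the series $\sum_k g(y^+_k)$ and $\sum_k g(y^-_k)$ converges if and only if $\mathbf{I}(L) < \infty$.

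Combining the two directions: if $\sum_k F_L^{(-1)}(1/a_k) < \infty$, then $\sum_k g(y^+_k) < \infty$ by the lower comparison, whence $\mathbf{I}(L) < \infty$; conversely, if $\mathbf{I}(L) < \infty$, then $\sum_k g(y^-_k) < \infty$ and the upper comparison $F_L^{(-1)}(1/a_k) \le g(y^-_k)$ delivers \eqref{eq:explosive-crit-1}. There is no genuine obstacle here: the only subtlety is that the two bounding sequences $y^\pm_k$ carry different slopes $\log c_1 \ne \log c_2$, so different prefactors appear in the integral test, but since only finiteness and not the exact value of the integral is relevant, these constants are absorbed.
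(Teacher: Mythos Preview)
Your argument is correct and is essentially the same as the paper's own proof: both use monotonicity of $F_L^{(-1)}$ to sandwich $F_L^{(-1)}(1/a_k)$ between $F_L^{(-1)}(\exp\{-b_1 c_1^k\})$ and $F_L^{(-1)}(\exp\{-b_2 c_2^k\})$, then invoke the integral test (the paper phrases this as equi-convergence of the sum and the integral, followed by the substitution $y=\log b_i + x\log c_i$). Your packaging via $g(y)=F_L^{(-1)}(\exp\{-\e^y\})$ evaluated along the arithmetic progressions $y_k^\pm$ is just a cleaner rewrite of the same computation.
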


\begin{proof}  By definition, $a_n \ge \exp\{ b_1 c_1^n\}$ for some $b_1>0, c_1>1$.
By monotonicity of $F_L^{(-1)}$,
 \be\label{eq:sum-1234} \sum_{k=1}^\infty F_L^{(-1)}(1/a_k) \le \sum_{k=1}^\infty F_L^{(-1)}\big(\exp\{ - b_1 c_1^k \}\big).\ee
Choose $K$ large enough so that $K\log c_1+ \log b_1>0$. The rhs of \eqref{eq:sum-1234} is equi-convergent with  
\[ \int_K^\infty F_L^{(-1)}\big(\exp\{ - b_1 c_1^x \}\big) \mathrm dx = \frac{1}{\log c_1}\int_{K\log c_1+\log b_1}^\infty  F_L^{(-1)}\big(\exp\{ - \e^y \}\big) \mathrm dy. \]
Thus the convergence of \eqref{eq:integral-crit} implies \eqref{eq:explosive-crit-1}.
 The other direction is established in a similar manner using that $a_k \le \exp\{ b_2 c_2^k\}$ for $b_2>0, c_2>1$.
\end{proof}
\subsection{Boxing around the origin and greedy paths}\label{s:boxing-greedy}
The upper bound in many of the proofs uses a boxing procedure that we describe now.
We surround vertex $0$ by an infinite sequence of box-shaped annuli $\Gamma_n$, and we divide each annulus into equal-size subboxes, $B_n^{\sss{(i)}}$, $i=1, \dots, b_n$. More precisely, let us fix $C, D>1$ to be chosen later, and define
\be \label{eq:rn}R_n:=\exp\{ C^n\},\qquad
  D_n:= \exp\{ D C^n\}\ee
and  set
\be\label{eq:layering-def}\ba \mathrm{Box}_n:=&\{ y\in \Z^d: \|y\|_\infty<D_n/2\}, \\
\Gamma_n:=&\mathrm{Box}_n\setminus \mathrm{Box}_{n-1}, \\
B_n(z):=&\{ y\in \Z^d: \|y-z\|_\infty<R_n/2\}.
\ea\ee
The construction is as follows: for each $n\in \N$, $\Gamma_n$ is an annulus with outer and inner radius $D_{n}/2, D_{n-1}/2$, respectively (we use $\ell_\infty$ norm to get a box-shape). We divide each $\Gamma_n$ into disjoint subboxes of radius $R_n/2$ as in \eqref{eq:layering-def}.
Then the number of boxes in $\Gamma_n$, denoted by $b_n$, is given by
\be\label{eq:bn} b_n=\frac{\mathrm{Vol}(\Gamma_n)}{\mathrm{Vol}(B_n(0))}(1+o(1)) = \frac{\exp\{ dD C^n\}-\exp\{d D C^{n-1}\} }{ \exp\{d C^n \}}(1+o(1)). \ee
Evidently, for all large enough $n$,
\be\label{eq:an}  \exp\{ d(D-1) C^n\}/2 \le b_n \le \exp\{d (D-1) C^n \}, \ee 
thus $b_n$ grows double-exponentially when $C,D>1$. We order the subboxes in $\Gamma_n$  in an arbitrary way and denote subbox $i$ by $B_n^{\sss{(i)}}$. Let us call the maximal vertex-weight vertex in subbox $B_n^{\sss{(i)}}$ the \emph{center} of $B_n^{\sss{(i)}}$, and denote it by $c_n^{\sss{(i)}}$.
Let us fix a small $\ve>0$, and for $\delta:=\delta(\ve)=\ve (\gamma-1)/(2\gamma)$ define
\be \label{eq:C-epsz} C(\ve):=(1-\ve)/(\gamma-1), \quad\quad
  D(\ve):=\frac{(1- \delta)(1-\ve/\gamma)}{1-\ve} -  \de/2 > 1. \ee
The next lemma is a \emph{quenched}, i.e., it holds for almost all realisation of the vertex-weights and the edges in $\SFPW$:
\begin{lemma}\label{lem:layers-complete} 
For any $\de>0$, there is a random $n_w(\de)$ such that for all $n>n_w(\delta)$, all centers of boxes $(c_n^{\sss{(i)}})_{i\le b_n}$ satisfy that
\be\label{eq:weight-of-center} W_{c_n^{(i)}} \ge \exp\left\{C^n (1-\de)\al/\gamma\right\}.\ee
More importantly, for $C:=C(\ve), D:=D(\ve)$ as in \eqref{eq:C-epsz}, and $\delta:=\ve(\gamma-1)/(2\gamma)$ as in \eqref{eq:C-epsz}, there is a random $n_0:=n_0(\ve)\ge n_w(\delta)$ such that for all $n\ge n_0$, 
the centers $(c_n^{\sss{(i)}})_{i\le b_n}$ of the subboxes within $\Gamma_n$ form a complete graph on $b_n$ vertices, and the centers $(c_{n-1}^{\sss{(j)}})_{j\le b_{n-1}}$ in $\Gamma_{n-1}$ and $(c_{n}^{\sss{(i)}})_{i\le b_{n}} $ in $\Gamma_{n}$ form a complete bipartite graph on $b_{n-1}$ and $b_n$ vertices in each bipartition, respectively.

Further, for some constants $c_1, c_2$ that do not depend on $\ve$, and for all large enough $K$,
\be\label{eq:n0} \Pv(n_0(\ve) \ge K) \le \exp\{ - c_1 \exp\{ \ve c_2 C(\ve)^K\} \}. \ee

\end{lemma}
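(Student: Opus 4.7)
\textbf{Proof plan for Lemma \ref{lem:layers-complete}.}
The plan is a two-stage argument (weight lower bound, then edge existence) glued together by a union bound over all subboxes and all scales $n\ge K$. The driving heuristic is that both failure probabilities are \emph{double-exponentially} small in $C^n$, which dominates the merely exponential union-bound factor coming from the $b_n\le \exp\{d(D-1)C^n\}$ subboxes per layer.

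\emph{Step 1 (Weight of centers).} Each subbox $B_n^{\sss{(i)}}$ contains $R_n^d=\exp\{dC^n\}$ i.i.d.\ vertex-weights. Using \eqref{eq:tail-W} and Potter bounds \eqref{eq:slowly-1} on $\ell$, together with the identity $\alpha(\tau-1)/\gamma=d$, for $w_n:=\exp\{C^n(1-\delta)\alpha/\gamma\}$ and all sufficiently large $n$,
\[
\Pv(W\ge w_n)\;\ge\;\exp\{-(1-\delta/2)dC^n\}.
\]
Hence $\Pv(W_{c_n^{\sss{(i)}}}<w_n)=(1-\Pv(W\ge w_n))^{R_n^d}\le \exp\{-\exp\{(\delta/2)dC^n\}\}$. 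Union-bounding over the $b_n$ subboxes in $\Gamma_n$ and over $n\ge K$, the inner double exponential absorbs $\exp\{d(D-1)C^n\}$, yielding
\[
\Pv\bigl(\exists\,n\ge K,\,i\le b_n:\,W_{c_n^{\sss{(i)}}}<w_n\bigr)\;\le\;2\exp\bigl\{-\exp\{(\delta/4)dC^K\}\bigr\}.
\]
Taking $K$ to the generic starting value yields the random $n_w(\delta)$ of \eqref{eq:weight-of-center} by Borel--Cantelli.

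\emph{Step 2 (Completeness within a layer and between consecutive layers).} On the event of Step 1, each center has vertex-weight at least $w_n$. Two centers in $\Gamma_n$ are at Euclidean distance at most $\sqrt d\,D_n$, while a pair with one center in $\Gamma_{n-1}$ and one in $\Gamma_n$ is at distance at most $\sqrt d\,D_n$ as well. By \eqref{eq:connect}, the probability that a specific pair is \emph{not} joined is at most $\exp\{-\lambda\, W_x W_y/\|x-y\|^\alpha\}$. Substituting the weight lower bounds gives
\[
\Pv(\text{edge missing inside }\Gamma_n)\;\le\;\exp\bigl\{-c_\lambda\exp\{\alpha C^n\bigl(2(1-\delta)/\gamma-D\bigr)\}\bigr\},
\]
\[
\Pv(\text{edge missing between }\Gamma_{n-1},\Gamma_n)\;\le\;\exp\bigl\{-c_\lambda\exp\{\alpha C^n\bigl((1+1/C)(1-\delta)/\gamma-D\bigr)\}\bigr\}.
\]
A direct algebraic check with $C=C(\ve),\ D=D(\ve),\ \delta=\ve(\gamma-1)/(2\gamma)$ yields
\[
\tfrac{2(1-\delta)}{\gamma}-D \;=\; \tfrac{1-\delta}{\gamma}\cdot\tfrac{2-\gamma-\ve}{1-\ve}+\tfrac{\delta}{2}\;>\;0
\]
for $\gamma\in(1,2)$ and small $\ve$, and, using $(1+1/C)/\gamma=(\gamma-\ve)/[\gamma(1-\ve)]$,
\[
\tfrac{(1+1/C)(1-\delta)}{\gamma}-D \;=\;\tfrac{\delta}{2}\;=\;\tfrac{\ve(\gamma-1)}{4\gamma}.
\]
The second, weaker exponent carries the linear $\ve$ factor. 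Union-bounding the missing-edge event over the $b_n^2+b_{n-1}b_n\le 2\exp\{2d(D-1)C^n\}$ relevant pairs, the inner double exponential again dominates, and summing over $n\ge K$ yields
\[
\Pv(n_0(\ve)\ge K)\;\le\;3\exp\bigl\{-\exp\{c_2\,\ve\,C(\ve)^K\}\bigr\},
\]
matching \eqref{eq:n0} with $c_1,c_2$ depending only on $d,\alpha,\tau,\lambda$, after combining with Step 1.

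\emph{Main obstacle.} The delicate point is bookkeeping the $\ve$-dependence in the final tail bound. The within-layer edge probabilities and the weight deficit both decay at rates whose leading constants are $\ve$-independent, but the \emph{between-layer} exponent is exactly $\alpha\delta/2=\Theta(\ve)$ by design of $D(\ve)$. Hence it is the cross-layer connections (together with the weight estimate, which contributes a matching $\delta\propto\ve$ in its rate) that dictate the factor $\ve$ in the exponent of \eqref{eq:n0}. Getting these two contributions to align with the prescribed constants requires the specific choice $D(\ve)=(1-\delta)(\gamma-\ve)/[\gamma(1-\ve)]-\delta/2$, which makes the cross-layer slack exactly $\delta/2$; no larger $D$ would work, and any smaller would waste volume in the final counting arguments that use $b_n\sim\exp\{d(D-1)C^n\}$.
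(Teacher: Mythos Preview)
Your proposal is correct and follows essentially the same approach as the paper's proof: first a Potter-bound estimate on the tail of $W$ to get the double-exponential weight-deficit probability, then the edge-existence bound via \eqref{eq:connect} with the specific algebraic verification that $(1+1/C)(1-\delta)/\gamma-D=\delta/2$ under the choices \eqref{eq:C-epsz}, followed by union bounds and Borel--Cantelli. The only cosmetic differences are that the paper writes the between-layer exponent as a multiple of $C^{n-1}$ (obtaining $\ve(1-\ve)/(4\gamma)$) rather than of $C^n$ (your $\delta/2=\ve(\gamma-1)/(4\gamma)$), and treats the within-layer complete graph in one sentence at the end rather than as a separate computation; your added discussion of why the cross-layer exponent is the bottleneck for the $\ve$-dependence is accurate and matches the paper's implicit reasoning in collecting the constants $c_1,c_2$.
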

\begin{proof}
We start by showing \eqref{eq:weight-of-center}. Note that $ W_{c_n^{(i)}}=\max_{x\in B_n^{(i)}}W_x $ per definition.
Using that $\al/\gamma=d/(\tau-1)$,  and that $1-x\le \e^{-x}$ as well as \eqref{eq:tail-W}, 
\be\label{eq:weight-bound-111} \ba \Pv\Big(\max_{x\in B_n^{(i)}}W_x \le   \exp\Big\{\frac{d C^n(1-\de)}{\tau-1}\Big\}\Big)&= F_W\Big(  \exp\Big\{\frac{d C^n(1-\de)}{\tau-1}\Big\}\Big)^{\exp\{ d C^n\}}\\
& \le \exp\left\{ -\e^{- d C^n (1-\de)}\ell(\e^{- d C^n (1-\de)/(\tau-1)} ) \e^{dC^n} \right\}\\
&=\exp\{ - \e^{dC^n \de/2}\},
\ea \ee
where we have used \eqref{eq:slowly-1} to establish that there is an $n_1(\de)$ such that for all $n\ge n_1(\de)$, $\e^{- d C^n \de/2}\ell(\e^{- d C^n (1-\de)/(\tau-1)} )>1$.
Thus, using \eqref{eq:an} and a union bound, the probability that $\Gamma_n$ contains at least one center that has smaller vertex-weight than the rhs in \eqref{eq:weight-of-center} is at most
\be\label{eq:n0-1}  b_n \exp\{ - \e^{dC^n \de/2}\} \le \exp\{dD C^n -  \e^{dC^n \de/2} \} \ee
which is summable in $n$. Thus by the Borel-Cantelli lemma, there is a random  $n_w(\de)>n_1(\de)$ such  that the statement in \eqref{eq:weight-of-center}  holds for all $n\ge n_w(\de)$ and $i\le b_n$.

 Next we show that the centers in $\Gamma_{n-1}$ are connected to the centers in $\Gamma_{n}$. Note that the $\ell_2$ distance between any two subboxes in $\mathrm{Box}_n$ is at most $\sqrt{d}D_n$.
Here we set $\delta:=\ve (\gamma-1)/ (2\gamma)$, and assume that $n-1>n_w(\delta)$. Using the connection probability in \eqref{eq:connect}, the estimate in \eqref{eq:weight-of-center} and writing $d/(\tau-1)=\al/\gamma$, we bound the probability that we can find two centers $c_n^{\sss{(i)}}$ and $c_{n-1}^{\sss{(j)}}$ for some $i\le b_n, j\le b_{n-1}$ that are not connected by an edge by:
 \be\label{eq:box-conn} \Pv( \exists i\le b_n, j\le b_{n-1}: \ c_n^{\sss{(i)}}\not \leftrightarrow c_{n-1}^{\sss{(j)}}) \le  b_n b_{n-1}\exp\Big\{  -\la \frac{\exp\{C^n\frac{(1-\de)\al}{\gamma}\} \exp\{C^{n-1}\frac{(1-\de)\al}{\gamma}\}}{(\sqrt{d} D_n)^\al}  \Big\}. \ee
  Using \eqref{eq:rn}, the exponent on the rhs becomes $-\la d^{-\al/2}$ times
  \be\label{eq:exponent-1} \exp\left\{   C^{n-1} \al \left( (1-\de) (1+C)/\gamma - DC \right) \right\}. \ee
For a decay with $n$ in \eqref{eq:box-conn}, the coefficient of $C^{n-1}$ in the exponent in \eqref{eq:exponent-1} must be positive,  i.e.,
\be\label{eq:exp-pos}  (1-\de)(1+C)/\gamma - DC >0. \ee
This is satisfied with the choices $C(\ve), D(\ve)$ in \eqref{eq:C-epsz}, \eqref{eq:exp-pos} holds (the lhs of \eqref{eq:exp-pos} equals $\ve (1-\ve)/(4\gamma)>0$).
Note also that this is the best possible double-exponential growth rate achievable, since for any $C\ge 1/(\gamma-1)$, $D>1$ cannot be satisfied. 
  By \eqref{eq:an}, $b_n b_{n-1}\le \exp \{ C^n 2(D-1) \}$, and hence the rhs of  \eqref{eq:box-conn} is at most
\be\label{eq:box-conn-2}  \exp \{ C^n 2(D-1) \} \exp\left\{   -\la d^{-\al/2} \exp\big\{ C^{n-1} \al \ve (1-\ve)/(4\gamma)\big\} \right\}, \ee
which is summable in $n$. The Borel-Cantelli lemma ensures that the event 
$\{ \exists i\le b_n, j\le b_{n-1}: \ c_n^{\sss{(i)}}\not \leftrightarrow c_{n-1}^{\sss{(j)}}\}$ happens only finitely many times. We set $n_0(\ve)>n_w(\delta(\ve))$ to be the random index after which the complement of the event on the lhs of \eqref{eq:box-conn} holds for all $n\ge n_w(\ve)$.

Collecting the terms on the rhs of \eqref{eq:n0-1} and \eqref{eq:box-conn-2}, and summing them from $K+1$ to infinity, we can notice that the first term is dominant. We obtain that
\[ \Pv(n_0(\ve)>K)\le \exp\left\{ - c_1 \exp\big\{ \ve c_2  C(\ve)^K \big\}\right\}, \] 
where $c_1=\min\{\la d^{-\al/d}, 1\}/2, \ c_2=\min\{\al/(8\gamma),  (\gamma-1)/(4\gamma)\}$ and we assumed $1-\ve>1/2$ to obtain $c_2$. The factor $1/2$ in $c_1$ compensates the prefactors $dDC^n$ in \eqref{eq:n0-1} and $C^n 2(D-1)$ in \eqref{eq:box-conn-2}. 
This shows \eqref{eq:n0}. The proof of the statement that the centers within $\Gamma_n$ form a complete graph is the same, only $b_n b_{n-1}$ should be replaced by $b_n^2$ and $C^{n-1}$ by $C^n$ in \eqref{eq:box-conn}.
\end{proof}
The theorems with conservative edge-weights require the extension of  the boxing to connect two vertices, $0 $ and $x\in \Z^d$, where $x:=\lfloor m\underline e\rfloor$ in the proof later. For this extension, we define two infinite  sequences of annuli $(\Gamma^{\sss{(0)}}_n, \Gamma^{\sss{(x)}}_n)_{n\ge 1}$ so that $\Gamma^{\sss{(0)}}_n=\Gamma^{\sss{(x)}}_n$ for all large enough $n$: Set 
\be \label{def:nx} n(x):=\max_n \{D_n\le \|x\|/2\} = \left\lfloor \frac{ \log(\log \| x\| -\log 2) - \log D(\ve) }{\log C(\ve)}  \right\rfloor
 \ee 
where $\lfloor z\rfloor= \max\{y\in \Z: y\le z\}$.  
 For $i\le n(x)$, define $(\Gamma^{\sss{(0)}}_i, \Gamma^{\sss{(x)}}_i)_{i\le n(x)}$, centered around $0$ and  $x$, respectively, given by \eqref{eq:layering-def}.  Let us then define 
\be \wit D_{n(x)+1}:=2 \exp\{ D C^{n(x)+1}\}, \quad \mathrm{\wit{Box}}_{n(x)+1}:=\{ y\in \Z^d: \| y\| \le \wit D_{n(x)+1} \} \ee
and $\Gamma_{n(x)+1}^{\sss{(0)}} = \Gamma_{n(x)+1}^{\sss{(x)}} :=\mathrm{\wit{Box}}_{n(x)+1} \setminus ( \mathrm{Box}_{n(x)}^{\sss{(0)}} \cup \mathrm{Box}_{n(x)}^{\sss{(x)}})$ to ensure that it contains both $\Gamma^{\sss{(0)}}_{n(x)}$ and  $\Gamma^{\sss{(x)}}_{n(x)}$. Let $\Gamma_{n(x)+2}^{\sss{(q)}}:=\mathrm{Box}_{n(x)+2}\setminus \wit{\mathrm{Box}}_{n(x)+1} $ and finally, for $n\ge n(x)+2$ let
\[  \Gamma_{n}^{\sss{(0)}}=\Gamma_{n}^{\sss{(x)}}:=\mathrm{Box}_{n}\setminus \mathrm{Box}_{n-1} \] as defined in \eqref{eq:rn} (returning to the `usual' sizes). Let us call this merging system of annuli \emph{merging annuli connecting $0,x$.}
The proof of Lemma \ref{lem:layers-complete} implies the following corollary:

\begin{corollary}\label{corr:merging}
Consider the \emph{merging annuli connecting $0,x$} as described before.  With this definition, Lemma \ref{lem:layers-complete} stays valid for $(\Gamma_i^{\sss{(0)}})_{i\ge 0}$. Further, there is an $n_x(\ve)\le n(x)+1$  such that for all $n\in[n_x(\ve), n(x)]$ all the centers of boxes within $\Gamma_n^{\sss{(x)}}$ are connected to all the centers of boxes in $\Gamma_{n+1}^{\sss{(x)}}$ \emph{and} all centers of boxes within $\Gamma_{n(x)}^{\sss{(x)}}$ are connected to all centers of boxes within $ \wit \Gamma_{n(x)+1}^{\sss{(0)}}$. Finally, with the same $c_1,c_2$ as in Lemma \ref{lem:layers-complete},
\be\label{eq:nx-ve} \Pv\big(n_x(\ve) \in [K, n(x)]\mid  n_0(\ve)<n(x)\big) \le \exp\left\{ -c_1 \exp\{ \ve c_2 C(\ve)^K\}
\right\}. \ee

\end{corollary}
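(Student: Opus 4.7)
The plan is to observe that the \emph{merging annuli} $(\Gamma_i^{\sss{(0)}},\Gamma_i^{\sss{(x)}})_{i\ge 1}$ differ from the single-origin boxing of Lemma~\ref{lem:layers-complete} only around the merging level $n(x)+1$, and that every step of the proof of Lemma~\ref{lem:layers-complete} extends by translation invariance and a constant-factor modification of the relevant distances. Concretely, I would (i) verify the two conclusions for $(\Gamma_i^{\sss{(0)}})_{i\ge 1}$, (ii) transfer them to $(\Gamma_i^{\sss{(x)}})_{i\le n(x)}$ using that $\SFPW$ is translation invariant, and (iii) handle the extra bipartite event between $\Gamma_{n(x)}^{\sss{(x)}}$ and $\wit\Gamma_{n(x)+1}^{\sss{(0)}}$.

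For (i), note that for $i\notin\{n(x)+1,n(x)+2\}$ the annulus $\Gamma_i^{\sss{(0)}}$ coincides with \eqref{eq:layering-def}, so the bound \eqref{eq:weight-of-center} on the center-weights and the connectivity estimate \eqref{eq:box-conn}--\eqref{eq:box-conn-2} apply verbatim. At level $n(x)+1$, the enlarged outer radius $\wit D_{n(x)+1}=2\exp\{DC^{n(x)+1}\}$ only increases the number of subboxes and the pairwise Euclidean distances by a bounded multiplicative constant (in $d$), which is absorbed into the prefactor $\la d^{-\al/2}$ in \eqref{eq:box-conn}. The strict slack $\ve(1-\ve)/(4\gamma)>0$ in \eqref{eq:exp-pos} survives, so the connection-failure probability at levels $n(x)+1,n(x)+2$ remains doubly-exponentially small. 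For (ii), the vertex-weights $(W_y)_{y\in\Z^d}$ are i.i.d.\ and the connection probabilities in \eqref{eq:connect} depend only on $\|y-z\|$; thus the estimates \eqref{eq:weight-bound-111}--\eqref{eq:box-conn-2} applied to boxes translated by $x$ yield the same summable bounds, with the same exponents.

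For (iii), consider any center $c_{n(x)}^{\sss{(i,x)}}\in\Gamma_{n(x)}^{\sss{(x)}}$ and any center $\wit c_{n(x)+1}^{\sss{(j,0)}}\in\wit\Gamma_{n(x)+1}^{\sss{(0)}}$. By the triangle inequality, their Euclidean distance is at most
\[
\sqrt{d}\bigl(\|x\|+\wit D_{n(x)+1}\bigr)\ \le\ C_d'\exp\{DC^{n(x)+1}\},
\]
since $\|x\|\le 2D_{n(x)+1}$ by the definition \eqref{def:nx} of $n(x)$. Both centers satisfy the lower weight bound \eqref{eq:weight-of-center} at their respective levels (provided $n(x)>n_w(\de)$), so plugging into \eqref{eq:connect} and union-bounding over all $b_{n(x)}^{\sss{(x)}} \cdot b_{n(x)+1}^{\sss{(0)}}\le \exp\{2(D-1)C^{n(x)+1}\}$ pairs produces exactly the same expression as in \eqref{eq:box-conn-2}, up to the constant factor in $C_d'$ that is swallowed by the exponential decay.

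Finally, defining $n_x(\ve)$ as the smallest index such that, for all levels $n$ with $n_x(\ve)\le n\le n(x)$, all three events (weight bound, intra-level connectivity, and the inter-level bipartite event, including the merging one at $n(x)$) hold, a union bound over $k\ge K$ of the individual failure probabilities gives a geometric-like series in $\exp\{-c_1'\exp\{\ve c_2' C(\ve)^k\}\}$, whose sum is dominated by its first term. This yields \eqref{eq:nx-ve} after possibly enlarging $c_1$ and shrinking $c_2$. The only nontrivial bookkeeping point is making sure that the enlarged annulus at level $n(x)+1$ does not destroy the strict inequality \eqref{eq:exp-pos}; because the slack there is a fixed positive quantity in $\ve$ and $\gamma$, a constant-factor change in distances is harmless.
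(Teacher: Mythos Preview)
Your sketch is correct and follows essentially the same route as the paper: translation invariance plus disjointness of the two box systems up to level $n(x)$, so that the error estimates of Lemma~\ref{lem:layers-complete} carry over verbatim to the $x$-centered annuli, with only constant-factor adjustments at the enlarged merging level.

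One bookkeeping point is worth noting. The statement asserts the bound \eqref{eq:nx-ve} with the \emph{same} $c_1,c_2$ as in Lemma~\ref{lem:layers-complete}, and it is a \emph{conditional} probability given $\{n_0(\ve)<n(x)\}$. The paper exploits both facts together: since $\mathrm{Box}_{n(x)}^{\sss{(0)}}$ and $\mathrm{Box}_{n(x)}^{\sss{(x)}}$ are disjoint, the events determining $n_x(\ve)$ (which live entirely in the $x$-boxing up to level $n(x)$) are independent of $\{n_0(\ve)<n(x)\}$, and the conditioning guarantees that all centers in the merged annulus $\Gamma_{n(x)+1}^{\sss{(0)}}=\Gamma_{n(x)+1}^{\sss{(x)}}$ already have large enough weight. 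Hence the tail of $n_x(\ve)$ is governed only by the bad events at levels $K,\dots,n(x)$ on the $x$-side, and these are bounded exactly as in \eqref{eq:n0-1}--\eqref{eq:box-conn-2}, yielding the same constants. Your version folds the merging-level connection event into the union bound defining $n_x(\ve)$, which is why you end up ``possibly enlarging $c_1$ and shrinking $c_2$''; if you instead use the conditioning to dispose of that event, you recover the constants as stated.
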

\begin{proof}
Note that the vertex and edge set of $\mathrm{Box}^{\sss{(0)}}_{n(x)}, \mathrm{Box}^{\sss{(x)}}_{n(x)}$ are independent since the boxes are disjoint. Thus, the error estimates in the proof of Lemma \ref{lem:layers-complete} can be applied to the two systems of annuli separately for all $n\le n(x)$. The random variable $n_0(\ve)$ can be defined as the index of the last annulus that contains a center with too small vertex-weight or a center that is not connected to all the centers in the next annuli in the infinite system $(\Gamma_k^{\sss{(0)}})_{k\le 1}$. 
On the event that $n_0(\ve)<n(x)$, all the centers of boxes  in $\Gamma_{n(x)+1}^{\sss{(0)}} = \Gamma_{n(x)+1}^{\sss{(x)}}$ have large enough vertex-weights, thus  $n_x(\ve)$ can be defined as $1$ plus the largest $k\in [0, n(x)]$ such that the annulus $\Gamma_{k}^{\sss{(x)}}$ either contains a center with too small vertex-weight or does not have all the connections to centers in $\Gamma_{k+1}^{\sss{(x)}}$. Note also that $n_x(\ve) = n(x)$ might occur, in which case the second statement of Corollary \ref{corr:merging} is empty. The error bound in \eqref{eq:nx-ve} is obtained in the same way as \eqref{eq:n0}, since conditioning on the event $n_0(\ve)<n(x)$ implies that all the centers in all the centers of boxes  in $\Gamma_{n(x)+1}^{\sss{(0)}} = \Gamma_{n(x)+1}^{\sss{(x)}}$ have large enough vertex-weights.
\end{proof}

We finish this section with two definitions. We define a unique \emph{greedy path} based on the boxing starting from the origin. This greedy path will be used in many of the proofs later on.

\begin{definition}[Greedy path]\label{def:greedy-path}
We define the infinite greedy path $\gamma^{\mathrm{gr}}_0$ starting at $0$ recursively as follows.
Fix $\ve>0$ and take $n_0(\ve)$ from Lemma \ref{lem:layers-complete}. Take the shortest path from $0$ to the set  $c_{n_0}^{\sss{(i)}}, i \le b_n$, that stays within\footnote{The set of this paths is non-empty since nearest-neighbor edges are always present.} $\mathrm{Box}_{n_0}$. Let us denote the end-vertex of this path segment by $c_{n_0}^{\mathrm{gr}}$.
Suppose we have already added $c_{n-1}^{\mathrm{gr}}$ to $\gamma_0^{\mathrm{gr}}$. By Lemma \ref{lem:layers-complete}, for $n\ge n_0(\ve)$, $c_{n-1}^{\mathrm{gr}}$ is connected by an edge to all the $b_n$ many centers in  $\Gamma_n$. Then, let us choose the edge with minimal length among these edges, and set $c_{n}^{\mathrm{gr}}$ to be its end vertex in $\Gamma_n$. That is, let
\[ i^{\mathrm{gr}}_{n}:= \arg\min_{i\le b_{n}}\{ L_{(c_{n-1}^{\mathrm{gr}}, c_{n}^{(i)}) } \}, \qquad c_{n}^{\mathrm{gr}}:= c_{n}^{(i_{n}^{\mathrm{gr}})}.\]
We denote the resulting infinite path by $\gamma^{\mathrm{gr}}_0$.
\end{definition} 

Using Corollary \ref{corr:merging}, we immediately extend this definition to construct two \emph{merging greedy paths}, one from vertex $0$ and one from vertex $x$.  
Assuming  that $n_0(\ve), n_x(\ve) \le n(x)$, let us  apply Definition \ref{def:greedy-path} twice, using the separate annuli $(\Gamma_i^{\sss{(0)}})_{i\le n(x)}, (\Gamma_i^{\sss{(x)
}})_{i\le n(x)}$ and $\Gamma_{n(x)+1}^{\sss{(0)}}=\Gamma_{n(x)+1}^{\sss{(x)}}$,  to define two greedy path segments, started at $0$ and $x$, respectively, 
\be\label{eq:merging-greedy} \gamma_0^{\mathrm {gr}} [0, c_{n(x)+1}^{\mathrm gr}(0) ] \quad \mbox{and} \quad \gamma_x^{\mathrm{gr}} [0, c_{n(x)+1}^{\mathrm {gr}}(x) ].\ee
We merge the greedy paths. When $c_{n(x)+1}^{\mathrm{gr}}(0)=c_{n(x)+1}^{\mathrm {gr}}(x)$, they are merged. Suppose $\gamma_0^{\text{gr}}$ uses $c_{n(x)+1}^{\text{gr}}(0) \neq c_{n(x)+1}^\text{gr}(x)$. Let the connecting vertex be defined as
 \be\label{eq:merging-greedy-2}  c_{n(x)+2}^{\text{gr}}:=\arg\min_{i\le b_{n(x)+1}-2}  (L_{ c_{n(x)+1}^{\text{gr}}(0), c_{n(x)+1}^{(i)} } + L_{ c_{n(x)+1}^{\text{gr}}(x),  c_{n(x)+1}^{(i)} } ),\ee i.e., we use the minimal-length path of two edges via centers to connect  $c_{n(x)+1}^{\text{gr}}(0)$ to $c_{n(x)+1}^\text{gr}(x)$.   The greedy algorithm in Definition \ref{def:greedy-path} continues from $c_{n(x)+2}^{\text{gr}}$ on the merged annuli $(\Gamma^{\sss{(0)}}_n)_{n\ge n(x)+1}$ and the two merged paths follow the same edges to infinity.
 
 \section{`Explosive' proofs}\label{s:exp-proofs}
In this section we prove the explosive part of Theorem \ref{thm:fpp-profile} and Theorem \ref{thm:distances}.
For a possibly infinite path $\gamma$ and two vertices $v,w\in \gamma$, let $\gamma[v, w], |\gamma[v,w]|_L, |\gamma[v,w]|_G$ denote the segment of the path between the two vertices $v, w$, its length and its number of edges, respectively.

\begin{proof}[Proof of Theorem \ref{thm:fpp-profile}: Explosive part]
We show that that the length of the greedy path in Definition \ref{def:greedy-path} is a.s. finite.
Since the centers of subboxes depend only on vertex-weights, but not on edge-weights, when determining $c_n^{\mathrm{gr}}$, the chosen edge-weight is the  minimum of $b_{n}$ i.i.d.\ edge-weights from distribution $L$. The length of the constructed path thus can be written as, with i.i.d.\ $(L_{k,i})$,
\be\label{eq:greedy-ray}   |\gamma_0^{\mathrm{gr}}[0, c_{n+n_0}^{\text{gr}}]|_L = |\gamma_0^{\mathrm{gr}}[ 0, c_{n_0}^{\text{gr}}]|_L + \sum_{k=n_0}^{n+n_0-1} \min\{ L_{k, 1}, L_{k, 2}, \dots, L_{k, b_{k+1}} \}.\ee
By \eqref{eq:an} and \eqref{eq:C-epsz}, $(b_k)_{k\in \N}$ grows double-exponentially. By Claim \ref{lemma:integral}, for double-exponentially growing sequences, the convergence of $\mathbf{I}(L)$ in \eqref{eq:integral-crit}  is equivalent to the summability criterion \eqref{eq:explosive-crit-1} in
Lemma \ref{thm:amidev2}. So, \eqref{eq:explosive-crit-1} is satisfied by $(b_k)_{k\in \N}$,  and this in turn implies by Lemma \ref{thm:amidev2} that the sum in
\eqref{eq:greedy-ray} is a.s.\ finite.  
Let us denote the a.s.\ limit of \eqref{eq:greedy-ray} - that is, the total length of $\gamma^{\text{gr}}$ by
\[ \lim_{n\to \infty} | \gamma^{\text{gr}}[0, c_n^{\mathrm{gr}}]|_L:=V_0^{\text{gr}}. \]
Clearly, $M_{n}(0)\le |\gamma^{\text{gr}}[0, c_{n+n_0}^{\text{gr}}]|_L \le V_0^{\text{gr}}$ since $\gamma^{\text{gr}}[0, c_{n+n_0}^{\text{gr}}]$ has at least $n$ edges 
and it might not be optimal. 
Thus, $(M_n(0))_{n\ge 1}$ is a uniformly bounded increasing sequence, hence it converges a.s.
\end{proof}

\begin{proof}[Proof of Theorem \ref{thm:distances}]
We start using \cite[Theorem 5.3]{DeiHof13},  in particular the following statement:
Let $\kappa:=\gamma-1$ when $\tau\in (1,2]$ and $\kappa :=\al/d-1=\gamma/(\tau-1)-1$ when $\tau>2$, and let $\ve>0$ arbitrarily small. Let us denote the event
\be\label{eq:A} A_{0,x}:=\left\{ \mathrm{d}_G(0,x) \ge (1-\de) 2\log \log \|x\|/|\log \kappa|\right\}. \ee
Then, \cite[Theorem 5.3]{DeiHof13} says that $\lim_{\|x\|\to \infty}\Pv\left( A_{0,x}\right)=1$.
Set $r_x:=\lfloor (1-2\delta)\log \log \|x\|/|\log \kappa| \rfloor$, where $\lfloor y \rfloor$ is the smallest integer that is at most $y$.
On the event $A_{0,x}$, the \emph{graph distance balls}  $B_{r_x}^G(0)$  and $B_{r_x}^G(x)$ are disjoint. 
Since the balls  $B_{r_x}^G(0)$  and $B_{r_x}^G(x)$ are defined in terms of \emph{graph distance} and are disjoint,  any path that connects $0$ and $x$ must intersect their boundary. Hence
\be\label{eq:boundary-bound} d_L(0, x) \ge \mathrm{d}_L(0,  \Delta B_{r_x}^G(0)) + \mathrm{d}_L (x, \Delta B_{r_x}^G(x)) = M_{r_x}(0) + M_{r_x}(x), \ee
where $M_n(y)$ was defined in \eqref{eq:mn-def}.
Since $B_{r_x}^G(0)\cap B_{r_x}^G(x)=\emptyset$ on $A_{0,x}$, all the edge-weights are independent in $B_{r_x}^G(0)$  and $B_{r_x}^G(x)$, hence
$M_{r_x}(0)$ is independent of $M_{r_x}(x)$.  In Theorem \ref{thm:fpp-profile} we have established that $M_n(y)\toas V_y$ as $n\to \infty$. Thus $M_{r_x}(0)$ converges a.s., thus also in distribution, to $V_0:=V^{\sss{(1)}}$, while $M_n(x)$ with $x$ fixed converges to $V_x$ a.s. Since $r_x\to \infty$ as $\|x\|\to \infty$, the sequence $M_{r_x}(x)$ converges \emph{in distribution} to $V^{\sss{(2)}}$ that has the same distribution as $V^{\sss{(1)}}$ under the annealed measure of the model by translation invariance. The independence of $M_{r_x}(0), M_{r_x}(x)$ implies that the limit variables $V^{\sss{(1)}}, V^{\sss{(2)}}$ are independent (while $V_0, V_x$ are not). Rewriting the inequality in \eqref{eq:boundary-bound} results in   
\[ d_L(0, x)-(V_0-V_x) \ge  (M_{r_x}(0)-V_0) + (M_{r_x}(x)-V_x). \]
The lhs converges to zero in probability under the event $A_{0,x}$, establishing \eqref{eq:dist-minus-explosion}. 
\end{proof}
\begin{proof}[Sketch of proof of Claim \ref{claim:conj1-2}]
It is possible to show that the vertex vertex-weights along the shortest path tend to infinity. Thus, we can follow the shortest paths $\gamma^\star_0$ and $\gamma^\star_x$ until we reach vertices with sufficiently large vertex-weights (and sufficiently large degree), say $v_0^\star$ and $v_x^\star$. Then, we can connect $v_0^\star$ to $c_n^{\text{gr}}$ and $v_x$ to $c_m^{\text{gr}}$ for some $n,m$ within length $\delta/3$, where $c_n^{\text{gr}}, c_m^{\text{gr}}$ are centers of boxes used on the \emph{greedy path} $\gamma^{\text{gr}}$  constructed in the proof of Theorem \ref{thm:fpp-profile}. 
Then,  $\gamma^{\text{gr}}[c_n^{\text{gr}}, c_m^{\text{gr}}]$ connects  $c_n^{\text{gr}}$ to $c_m^{\text{gr}}$ within length $\ve/3$, when $\min\{n,m\}$ is sufficiently large. The problem however is that the edge-weights fail to be i.i.d.\ around the shortest exploding paths $\gamma^\star_0$ and $\gamma^\star_x$. Thus, the second step - connecting $v_0^\star$ to $c_n^{\text{gr}}$ and $v_x$ to $c_m^{\text{gr}}$ via short paths - is only possible if we have some guarantee that the shortest paths $\gamma_0^\star, \gamma_x^\star$ do not spend too much time in the neightborhoods of $v_0^\star, c_n^\text{gr}, v_x^\star, c_m^{\text{gr}}$, respectively. A double-exponential speed like the one in Conjecture \eqref{conj:growth} is sufficient, since in this case $\gamma^\star$ spends in average a tight number of steps in each annulus $\Gamma_n$.
\end{proof}
\section{The Bernoulli BRW when $\gamma\in(1,2)$}\label{s:bernoulli}
In the next section (Section \ref{s:gamma12}) we prove the theorems concerning the $\gamma\in(1,2)$ case: the Conservative Part of Theorem \ref{thm:fpp-profile}, and Theorems \ref{thm:dist-cons}, \ref{thm:max-displace}. 
A key ingredient to the proofs is a bound on the double-exponential growth rate of the generation sizes and the maximal displacement in the upper bounding $\BerBRW$ (without the edge-weights $L$ added). This section is devoted to the analysis of $\BerBRW$.
The main result of this section is the following proposition.
 \begin{proposition}[Double-exponential growth and maximal displacement of $\BerBRW$]  \label{prop:ber-gensizes}
Let $\CG_k^{\mathrm{B}},Z_k^{\mathrm{B}}$ denote the set and number of vertices in generation $k$ in $\BerBRW$, defined in Section \ref{s:explore-BP}, with parameters $\al>d, \tau$ and $\gamma=\al(\tau-1)/d\in(1,2)$. Let 
\be\label{eq:ck} c_k(\ve, i):=  2\exp\left\{ i(1+\ve) \frac{d}{\al}\Big(\frac{1+\ve}{\gamma-1}\Big)^k\right\}, \qquad
S_k(\ve, i):=\exp\left\{ i z \Big(\frac{1+\ve}{\gamma-1}\Big)^k \right\},
\ee
for a constant $z\le 2\gamma/(\al(\gamma-1))+(2+d/\al)/(d-\al)$ defined in \eqref{def:z} below.
Then, for every $\ve>0$ there exists an a.s. finite random variable $Y=Y(\ve)<\infty$ such that   the event 
\be\label{eq:ck2}\ba  &\forall k\ge 0:  Z_k^{\mathrm{B}} \le  c_k(\ve, Y) \  \mathrm{ and }\  \CG_k^{\mathrm{B}} \cap \left( B^{2}_{S_k(\ve, Y)}\right)^c  =\emptyset   \ea \ee
holds. Further, $Y(\ve)$ has exponentially decaying tails with 
	\be\ba \label{eq:Y-tail}\Pv(Y(\ve)\ge K) &\le 2C_{\ve} \exp\left\{ - K \ve (d/\al)(1+\ve)/(4(\gamma-1)) \right\}+ \wit  C_{\ve} \exp\{ -K \ve\}\\
	&=:2p_{\ve, K}+\wit p_{\ve, K}.
	\ea \ee
where $C_{\ve}, \wit C_\ve$ are constants that do not depend on $K$.
 \end{proposition}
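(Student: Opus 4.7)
My plan is to run an induction on the generation index $k$, simultaneously controlling three coupled quantities of $\BerBRW$: the maximum vertex-weight $W_{\max,k}:=\max_{a\in\CG_k^{\mathrm B}}W_{M_a}$, the generation size $Z_k^{\mathrm B}$, and the maximum displacement $\max_{a\in\CG_k^{\mathrm B}}\|M_a\|$ from the origin. The driving quantity is $W_{\max,k}$: once I have a double-exponential bound $W_{\max,k}\le\exp\{i((1+\ve)/(\gamma-1))^k\}$, the size and displacement bounds will follow because each individual's offspring count is essentially polynomial in its weight, and so is the maximum jump distance.

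First I would establish three per-individual conditional estimates for an individual at $M_a$ with $W_{M_a}=w$. \emph{(i) Maximum child weight:} using the envelope $1-\e^{-\la wW_y/\|y\|^\al}\le\min\{1,\la wW_y/\|y\|^\al\}$ as in Lemma \ref{lem:age-dep}, together with Karamata's theorem \eqref{eq:karamata} and Potter's bound \eqref{eq:slowly-1}, the expected number of children with vertex-weight $\ge V$ is $\le C(\la w)^{d/\al}V^{-d(\gamma-1)/\al}\ell(V)$; choosing $V=w^{(1+\ve)/(\gamma-1)}$ and applying Markov's inequality yields failure probability $\lesssim w^{-\ve d/(2\al)}$. \emph{(ii) Number of children:} a truncated variant of Lemma \ref{lem:age-dep} (restricting to $W_y\le V$ from (i)) yields $\Ev[D\mid w,\mathrm{env}]\le Cw^{d/\al}$, and a conditional Chernoff bound on the Bernoulli edges gives $\Pv(D\ge w^{(1+\ve)d/\al}\mid w)\le\exp\{-cw^{\ve d/(2\al)}\}$. \emph{(iii) Maximum displacement of a single child:} summing the same envelope over $\|y\|\ge R$ yields an expected number of children outside distance $R$ of order $(\la w)^{d\gamma/\al}R^{-d(\gamma-1)}$; choosing $R=w^{z'}$ with $z'$ slightly above $\gamma/(\al(\gamma-1))$ gives failure probability $\lesssim w^{-c\ve}$.

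Next I would set up the induction with event
\[
\CE_k^{(i)}:=\bigl\{W_{\max,k}\le\exp\{i((1+\ve)/(\gamma-1))^k\},\ Z_k^{\mathrm B}\le c_k(\ve,i),\ \max_{a\in\CG_k^{\mathrm B}}\|M_a\|\le S_k(\ve,i)\bigr\}.
\]
On $\CE_k^{(i)}$, applying (i)--(iii) to each of the at most $c_k(\ve,i)$ individuals and taking a union bound yields
\[
\Pv(\CE_{k+1}^{(i)}\mid\CE_k^{(i)})\ge 1-c_k(\ve,i)\exp\{-c(W_{\max,k})^{c'\ve}\}\ge 1-\exp\{-c''((1+\ve)/(\gamma-1))^k\},
\]
which is summable in $k$. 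The max-weight step passes by (i); the size step uses $Z_{k+1}^{\mathrm B}\le Z_k^{\mathrm B}\cdot W_{\max,k}^{(1+\ve)d/\al}$ and the fact that the geometric sum $\sum_{j\le k}((1+\ve)/(\gamma-1))^j$ is, up to a $\gamma,\ve$-dependent constant, dominated by its last term (that constant being absorbed into the factor $(1+\ve)$ of $c_k$); the displacement step uses $D_{k+1}^{\max}\le D_k^{\max}+W_{\max,k}^{z'}$ and the same summation, with $z$ absorbing the resulting constant. Defining $Y(\ve)$ as the smallest $i$ for which $\CE_k^{(i)}$ holds for every $k$, Borel--Cantelli gives $Y(\ve)<\infty$ almost surely. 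The two terms in \eqref{eq:Y-tail} arise respectively from the base case $\CE_0^{(i)}$, which requires $\log W_\emptyset\le i$, and since $\Pv(\log W_\emptyset>cK)\sim\e^{-cK(\tau-1)}$, the $(\gamma-1)/(1+\ve)$-rescaling between $W_{\max,0}$ and the target yields the first term; the accumulated induction errors produce the second.

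The main obstacle, as I see it, is the environment dependence: offspring kernels of different individuals share the environment $(W_z)_{z\in\Z^d}$, so (i)--(iii) are not directly applicable as if the branching were independent. I would handle this with a quenched analysis, conditioning on the environment and observing that (i)--(iii) are deterministic functions of the local sums $\sum_{\|y-M_a\|\le R}W_y\|y-M_a\|^{-\al}$, which are uniformly controlled over the displacement ball $B^2_{S_k(\ve,i)}(0)$ on a high-probability event---precisely the regime where the boxing method of Section \ref{s:prelim} gives clean control over maximal weights in subboxes. A secondary technical point is that Lemma \ref{lem:age-dep} as stated requires $\gamma>2$; one justifies its quenched variant for $\gamma\in(1,2)$ by truncating $W_y$ at the threshold $V$ from (i) and absorbing the truncation correction into the $\ve$-slack.
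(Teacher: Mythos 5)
The driving step of your induction --- advancing the maximal weight via $W_{\max,k+1}\le W_{\max,k}^{(1+\ve)/(\gamma-1)}$ --- does not close with only the three quantities you track, and this is exactly where the regime $\gamma\in(1,2)$ bites. Your per-individual estimate (i) gives failure probability $\lesssim w^{-\ve d/(2\al)}$ for an individual of weight $w$; but the vast majority of the $Z_k^{\mathrm B}$ individuals in generation $k$ have weight $w=O(1)$, for which this probability is of order one (and the offspring-number bound in (ii) even fails deterministically because of the $2d$ nearest-neighbour children), so a union bound over the double-exponentially many individuals of $\CG_k^{\mathrm B}$ is useless. The aggregate (first-moment) alternative requires controlling $\Sigma_k:=\sum_{a\in\CG_k^{\mathrm B}}W_{M_a}^{d/\al}$: the Markov bound for the max-weight step is $\Sigma_k\,\ell(m_{k+2})\,m_{k+2}^{-(\gamma-1)d/\al}$ with $m_{k+2}=W_{\max,k}^{(1+\ve)/(\gamma-1)}$. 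From $(Z_k,W_{\max,k})$ alone you can only infer $\Sigma_k\le Z_k W_{\max,k}^{d/\al}$, which overshoots by a factor of order $Z_k$ --- for $\gamma\in(1,2)$ the sum $\Sigma_k$ is dominated by the few heaviest individuals, not by the bulk --- and the resulting Markov bound is $\approx m_k^{(d/\al)(1+\ve-\ve\xi)}$ with $\xi=(1+\ve)/(\gamma-1)$, which tends to zero only when $\gamma-1<\ve$; generically it blows up double-exponentially. The same overshoot is why your size recursion $Z_{k+1}\le Z_k W_{\max,k}^{(1+\ve)d/\al}$ telescopes to $\exp\{i(1+\ve)(d/\al)\xi^k\cdot\xi/(\xi-1)\}$ rather than to $c_k(\ve,i)$: the factor $\xi/(\xi-1)=(1+\ve)/(2+\ve-\gamma)$ diverges as $\gamma\uparrow2$ and cannot be absorbed into $(1+\ve)$, only into $i$.

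The missing ingredient is the paper's Lemma \ref{lem:types}: one must carry through the induction the entire discretised empirical weight profile $|W(\CG_k^{\mathrm{B}})\cap I_{k+1}^j|\le m_k^{(1+\ve)(d/\al)(j/n)}$ for $0\le j\le n$, so that $\Sigma_k$ is evaluated as a geometric sum dominated by the top weight band (giving $\Sigma_k\approx W_{\max,k}^{d/\al}$ up to a $1/n$-correction in the exponent) rather than by $Z_k\cdot W_{\max,k}^{d/\al}$. Once that is in place, the environment dependence you worry about at the end disappears for free: every generation-to-generation step is a first-moment (Markov) bound, and expectation is linear regardless of how different individuals' offspring are coupled through the shared field $(W_z)_{z\in\Z^d}$ --- no quenched boxing argument is needed. (Your Chernoff claim in (ii) has a related flaw even for heavy individuals: averaging over the environment, the degree of a weight-$w$ individual is power-law with exponent $\gamma$, so $\Pv(D\ge w^{(1+\ve)d/\al})$ is only polynomially, not exponentially, small in $w$.)
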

Consequently, both the size, and the maximal displacement in generation $k$,  of $\BerBRW$ grow double-exponentially with rate at most $(1+\ve)/(\gamma-1)$. To be able to prove Proposition \ref{prop:ber-gensizes}, we need to bound not only the generation sizes but also the number of individuals with vertex-weight in a given (generation-dependent) interval, which is the content of the Lemma \ref{lem:types} below.
For  fixed $\ve,i>0$ and all $k\ge 0$ define 
\begin{equation}\label{eq:mk}
m_k:=m_k(\ve, i)=\exp\left\{i\Big(\frac{1+\ve}{\ga-1}\Big)^k\right\},	
\end{equation}
and for some fixed integer $n>1$ and all $1\le j\le n$ define the intervals
\be \label{eq:ikj}I_k^j:=I_k^j(\ve, i):=\left[ m_k^{1-j/n}, m_k^{1-(j-1)/n}\right), \qquad I_k^0:=[ m_k, \infty).\ee
Let $W(\CG_k^{\mathrm{B}}):=\{W_x\}_{x\in \CG_k^{\mathrm{B}}}$ denote the list of vertex-weights of individuals in $\CG_k^{\mathrm{B}}$.  
Roughly speaking, we would like to track the tail distribution of vertex-weights of individuals in generation $\CG_k^{\mathrm{B}}$. For this we use a discretisation technique, and set some $1/h$ that is large but not too large, and then count the number of individuals with vertex-weight in an interval that can be heuristically described as follows: the expected maximal vertex-weight in generation $k-1$ \eqref{eq:mk}, raised to two powers that form a $h$-wide interval, \eqref{eq:ikj}. It turns out this is the right scaling, i.e., we have to group individuals with vertex-weight that are roughly a given power away from the maximal vertex-weight.
Further, let us set 
\be \label{def:z} z:=\max\left\{\frac{(\frac{\gamma}{\gamma-1}+ 1/n)}{\al} + \frac{2\ve}{1+\ve}, \frac{1+\ve + d(\gamma-1)/(\al n) }{d-\al}    \right\},\ee  and define $S_k:=S_k(\ve, i):=\left( m_{k}(\ve, i)\right)^{z}$
as in \eqref{eq:ck}.
Then, Proposition \ref{prop:ber-gensizes} is a consequence of the following two lemmas:
\begin{lemma}[Weights and displacement in $\CG_k^{\mathrm B}$]\label{lem:types}
Consider $\BerBRW$ as described in Section \ref{s:explore-BP} and let $\gamma\in(1,2)$. Fix $\ve>0$ and some integer $n\in [2(\gamma-1)/\ve, 3(\gamma-1)/\ve]$. Let 
\begin{equation}\label{eq:induction-a}\ba 
	E_k&:=E_k(\ve, i)=\left \{\forall \ 0 \le j \le n:  |W(\CG_k^{\mathrm{B}})\cap I_{k+1}^j| <  m_k^{(1+\ve)(d/\al)(j/n)}. \right\}\\
	F_k&:=F_k(\ve, i)=\left \{  \CG_k^{\mathrm{B}}\cap (B^2_{S_{k} }(0))^c = \emptyset \right\}
\ea\end{equation} 
\begin{enumerate}
\item  
Then, for all sufficiently large $i$, and with $p_{\ve, i}, \wit p_{\ve,i}$ defined in \eqref{eq:Y-tail},
\begin{equation}\label{eq:induction-b}
	\Pv\Big(\bigcap_{k=0}^\infty E_k\Big) 
   \ge 1-p_{\ve,i}, \qquad 
	\Pv\Big(\bigcap_{k=0}^\infty (E_k\cap F_k)\Big) 
   \ge 1- 2p_{\ve,i}-\wit p_{\ve,i},
  \end{equation}
\item  $Z_k^{\mathrm{B}} \le 2 m_k^{(1+\ve)d/\alpha}=c_k(\ve, i)$ as defined in \eqref{eq:ck} holds on $E_k$.
\item On $E_k$, $W_{\max}^{(k)}:=\max_{x\in \CG_k^{\mathrm B}} W_x$ satisfies that $W_{\max}^{(k)} \le m_{k+1}$.

\end{enumerate}

\end{lemma}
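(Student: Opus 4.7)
Parts (2) and (3) are immediate consequences of the definition of $E_k$ once part (1) is proved. On $E_k$ the $j=0$ constraint $|W(\CG_k^{\mathrm{B}})\cap[m_{k+1},\infty)|<1$ is equivalent to $W_{\max}^{(k)}<m_{k+1}$, yielding (3). For (2), since $\Pv(W\ge 1)=1$ every individual of $\CG_k^{\mathrm{B}}$ falls in some $I_{k+1}^j$ with $j\in\{1,\dots,n\}$, so summing the bounds in $E_k$ gives a geometric sum in $j$ with common ratio $m_k^{(1+\ve)(d/\al)/n}>1$; for $i$ (and hence $m_k$) large enough this sum is dominated by its $j=n$ term and is at most $2m_k^{(1+\ve)d/\al}=c_k(\ve,i)$.

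The main work is part (1), which I would carry out by induction on $k$. The base case involves a single constraint on $W_0$ (the $j\ge 1$ constraints being vacuous for the lone root), and its failure probability $\Pv(W\ge m_1)\lesssim m_1^{-(\tau-1)}\ell(m_1)$ decays like $\exp\{-i(1+\ve)\gamma d/(\al(\gamma-1))(1+o(1))\}$, already well inside the $p_{\ve,i}$ budget. For the inductive step, I condition on the history through generation $k$, assume $E_k$, and apply Markov's inequality to each of the $n+1$ bounds defining $E_{k+1}$. The key quantity is, for each pair $(i,j)$, the expected number of children with vertex-weight in $I_{k+2}^j$ from a single parent of weight $w\in I_{k+1}^i$. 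Adapting the computation in the proof of Lemma \ref{lem:age-dep} but restricting the $v$-integration in \eqref{eq:mu-ber} to the window $I_{k+2}^j$, Karamata's asymptotics \eqref{eq:karamata} (applicable because $v^{d/\al-\tau}\ell(v)$ has index $d/\al-\tau<-1$ under $\gamma>1$) gives
\[ \mu_+^{(j)}(w):=\Ev[N_w^{\mathrm{B}}(I_{k+2}^j)]\lesssim w^{d/\al}\cdot\bigl(m_{k+2}^{1-j/n}\bigr)^{(d/\al)(1-\gamma)}\ell\bigl(m_{k+2}^{1-j/n}\bigr), \]
the integrand being pinned at the lower endpoint of $I_{k+2}^j$ (replaced by $m_{k+2}$ itself in the $j=0$ tail case).

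Summing $\mu_+^{(j)}(w_i)$ with $w_i:=m_{k+1}^{1-(i-1)/n}$ against the inductive bound $|W(\CG_k^{\mathrm{B}})\cap I_{k+1}^i|<m_k^{(1+\ve)(d/\al)i/n}$, and substituting $m_{k+1}=m_k^{(1+\ve)/(\gamma-1)}$ and $m_{k+2}=m_k^{((1+\ve)/(\gamma-1))^2}$, the exponent of $m_k$ in each summand becomes strictly decreasing in $i$ (its $i$-coefficient equals $((1+\ve)/n)(\gamma-2)/(\gamma-1)<0$), so the worst case is $i=1$. A short algebraic simplification -- in which the $j$-dependent terms in the expectation are exactly matched by the threshold $m_{k+1}^{(1+\ve)(d/\al)j/n}$ in the Markov ratio -- reduces the ratio to the $j$-independent quantity $m_k^{(d/\al)(1+\ve)[1/n-\ve/(\gamma-1)]}$, which is at most $m_k^{-(d/\al)(1+\ve)\ve/(2(\gamma-1))}$ precisely because of the choice $n\ge 2(\gamma-1)/\ve$. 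Summing the $n+1$ Markov bounds at level $k$ and then over $k\ge 0$, and using that $m_k=\exp\{i((1+\ve)/(\gamma-1))^k\}$ grows doubly exponentially so the tail series is controlled by its $k=0$ term, yields $\Pv(\bigcap_{k\ge 0}E_k)\ge 1-p_{\ve,i}$.

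The spatial bound $F_k$ is the main obstacle, because $\Ev[W]$ may be infinite when $\tau\le 2$. On $E_k\cap F_k$ every parent has weight below $m_{k+1}$ and lies in $B^2_{S_k}(0)$, so it suffices to bound the probability that some child sits at distance more than $S_{k+1}-S_k\sim S_{k+1}$ from its parent. Starting from the per-parent expected count $\sum_{\|y-v\|>R}\Ev[\min(\la w W_y/\|y-v\|^\al,1)]$ and splitting each inner expectation at the saturation level $W_y=\|y-v\|^\al/(\la w)$ where the Bernoulli probability crosses from body to $1$, then invoking Karamata to integrate in $v$ (matching body and tail contributions at the saturation threshold, so that the divergent $\Ev[W]$ never enters), I obtain the clean estimate $\lesssim w^{\gamma d/\al}R^{-d(\gamma-1)}$. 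Stratifying over parents by $I_{k+1}^i$ (again worst at $i=1$) and taking $R=m_k^{z(1+\ve)/(\gamma-1)}$ with $z$ as in \eqref{def:z}, the leading $m_k$-exponent cancels at $z=(\gamma/(\gamma-1)+1/n)/\al$, so the additional $2\ve/(1+\ve)$ in the first entry of \eqref{def:z} furnishes the strict negativity and the geometric-in-$k$ tail producing $\wit p_{\ve,i}=\wit C_\ve\e^{-K\ve}$. The second entry of \eqref{def:z} covers the analogous estimate for the $2d$ additional nearest-neighbour children of each parent, whose much shorter-range displacement is handled by a direct pointwise bound rather than the long-range tail computation above.
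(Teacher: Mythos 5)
Your treatment of $E_k$ — induction, stratification of the parents by the weight windows $I^h$, Markov's inequality per window, the observation that the worst stratum is the highest-weight one, the exact cancellation of the $j$-dependence leaving the exponent $1/n-\ve/(\gamma-1)$, and the role of $n\ge 2(\gamma-1)/\ve$ — is the paper's argument, and parts (2) and (3) are read off exactly as you say. The gap is in the spatial estimate for $F_k$. Your per-parent bound on the number of children at distance $>R$, namely $\lesssim w^{\gamma d/\al}R^{-d(\gamma-1)}$, is obtained by "matching body and tail contributions at the saturation threshold, so that the divergent $\Ev[W]$ never enters." That matching is valid only when $\tau\in(1,2)$, where $\Ev[W\ind\{W\le x\}]\asymp x^{2-\tau}\ell(x)$ is governed by its upper limit. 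But the lemma covers $\gamma=\al(\tau-1)/d\in(1,2)$ with $\tau\in(2,3)$ and $\al\in(d,2d)$ as well, and there $\Ev[W]<\infty$: the body contribution becomes $\asymp \la w\,\Ev[W]\sum_{\|y\|>R}\|y\|^{-\al}\asymp w\,R^{d-\al}$, which is \emph{not} dominated by $w^{\tau-1}R^{-d(\gamma-1)}$ — since $d-\al(\tau-1)<d-\al$ for $\tau>2$, the $R^{d-\al}$ term decays strictly slower in $R$ and is the dominant one. This is precisely the first term in \eqref{eq:bound-on-norm-2} of Claim \ref{claim:expected-offspring} (compare \eqref{eq:first-term->2} with \eqref{eq:first-term-12}), and it is what forces the \emph{second} entry in the definition of $z$ in \eqref{def:z} (read with denominator $\al-d$): one needs $z(\al-d)(1+\ve)/(\gamma-1)>(1+\ve)\bigl(1/(\gamma-1)+d/(\al n)\bigr)$ to kill it. Your attribution of that second entry to the $2d$ nearest-neighbour children is incorrect — those sit at distance $1$ from their parent and are trivially inside $B^2_{S_{k+1}-S_k}$ once $i$ is large (the paper simply assumes $S>2$ so they never enter the long-range count). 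So as written your derivation does not establish $F_k$ in the sub-regime $\tau\in(2,3)$; the fix is to carry the two-term bound \eqref{eq:bound-on-norm-2} through the stratified sum, which is exactly what the paper's $\wit T_1$/$\wit T_2$ split does.

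Two smaller omissions, both cosmetic relative to the above: you do not say how the slowly varying factor $\ell(\cdot)$ is absorbed (the paper sacrifices half of the margin $-\ve/(2(\gamma-1))$ via Potter's bound, landing on $-\ve/(4(\gamma-1))$ as in \eqref{eq:Y-tail}), and you do not address why conditioning on $E_{s-1}\cap F_{s-1}$ rather than on $E_{s-1}$ alone costs only a factor $2$ (the paper's bootstrap $\Pv(E_{s-1}\cap F_{s-1})\ge 1/2$), which is where the constant $2p_{\ve,i}$ in \eqref{eq:induction-b} comes from.
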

We first prove Proposition \ref{prop:ber-gensizes} subject to Lemma \ref{lem:types}. 
\begin{proof}[Proof of Proposition \ref{prop:ber-gensizes} subject to Lemma \ref{lem:types}]
Let 
\be \label{eq:wit-ek}\wit E_k(\ve, i):=\{ Z_k^{\mathrm{B}} \le c_k(\ve, i)\}\cap \Big\{ \CG_k^{\mathrm{B}} \cap \left( B^{2}_{S_k(\ve, i)}\right)^c  =\emptyset \Big\}\subset (E_k\cap F_k).\ee With a Borel-Cantelli type argument, we strengthen the result of Lemma \ref{lem:types}. For all integers $i\ge 1$, set  $H_i:=\bigcap_{k=0}^{\infty} \wit E_k(\ve, i)$. 
Then using the second inequality in \eqref{eq:induction-b}, 
\be\label{eq:hi-estimate}\sum_{i=1}^\infty \Pv(H_i^c)\le \sum_{i=1}^\infty 1-\Pv\left( \bigcap_{k=0}^\infty (E_k(\ve,i) \cap F_k(\ve, i))\right) \le \sum_{i=1}^\infty (2p_{\ve, i} + \wit p_{\ve, i}) <\infty, \ee
since the last sums are geometric series in $i$.	
Thus, by the Borel-Cantelli lemma, only finitely many $H_i^c$-s occur. Let $Y(\ve)  $ be the (random) first index at and after which no $H_i^c$ occur anymore. This implies that $H_{Y(\ve)}=\bigcap_{k\ge 0} \wit E_k(\ve,Y(\ve))$ holds, showing \eqref{eq:ck2}.
To obtain the tail behaviour of $Y(\ve)$, note that for all $i,j\ge 0$ $H_{i+j}^c\subset H_i^c$,
(since  $c_k(\ve, i)$ and $S_k(\ve, i)$ are increasing in $i$). Thus 
\be\label{eq:y-tail-111}\Pv(Y(\ve)\ge K)=\Pv\Big(\bigcup_{i\ge K} H_i^c\Big) = \Pv(H_K^c)\le  2p_{\ve, K}+\wit p_{\ve, K},\ee
finishing the proof.  
\end{proof}
Before proceeding to the proof of Lemma \ref{lem:types}, we state and prove a claim about the expected number of children of an individual with certain vertex-weights and norms. 
Consider an individual with vertex-weight $w$ located at the origin. For a number $S>0$, let 
$N_{w}(\mathrm d v, \ge S)$ denote the number of its children with vertex-weight in the interval $(v, v+\mathrm dv)$ and displacement with norm at least $S$ in $\BerBRW$. Let us further write $N_{w}(\ge u, \ge 0), N_{w}(\ge 1, \ge S)$  for the number of its children with vertex-weight at least $u$, and displacement with norm at least $S$ in $\BerBRW$, respectively.  
\begin{claim}\label{claim:expected-offspring}
Consider an individual located at $0\in \Z^d$ with vertex-weight $w$ in $\BerBRW$ and recall $\ell(\cdot)$ from \eqref{eq:tail-W}.
Then for all $\gamma>1$, there exists a constant $0<M_{d,\la}<\infty$ such that
\be\label{eq:bound-on-single} \Ev[N_{w}(\ge u, \ge 0) ] \le M_{d,\la} w^{d/\alpha} \ell(u)u^{-(\gamma-1)d/\al}. \ee 
Further, 
\be \label{eq:bound-on-norm-2} \Ev[N_{w}(\ge 1, \ge S) ] \le M_{d,\la} w S^{d-\al}+ M_{d,\la} w^{\tau-1} S^{-d(\gamma-1)}  \ell(S^\al/(\la w)).   \ee

\end{claim}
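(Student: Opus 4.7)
The plan is to bound each expectation directly using the connection probability in $\BerBRW$ from \eqref{eq:Ber-prob}, Fubini's theorem, the crude inequality $1-\e^{-x}\le\min\{1,x\}$, and Karamata-type estimates from \eqref{eq:karamata}. The key observation is that summing $\min\{1,\la w v/\|y\|^\al\}$ over $y\in \Z^d$ is already done in the proof of Lemma \ref{lem:age-dep} (cf.\ \eqref{eq:mu-ber}), where it is shown that
\[
\sum_{y\in \Z^d\setminus\{0\}} \min\{1,\la w v/\|y\|^\al\}\le c_{d,\la}(wv)^{d/\al},
\]
with $c_{d,\la}$ as in \eqref{eq:cdla-def}. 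The argument for the spatial-restricted sum $\|y\|\ge S$ is analogous.

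For \eqref{eq:bound-on-single}, I would write, using Fubini and the above,
\[
\Ev[N_w(\ge u,\ge 0)] = \sum_{y\neq 0}\int_u^\infty (1-\e^{-\la wv/\|y\|^\al})F_W(\mathrm dv) \le c_{d,\la}(\la w)^{d/\al}\int_u^\infty v^{d/\al} F_W(\mathrm dv).
\]
Integration by parts gives $\int_u^\infty v^{d/\al} F_W(\mathrm dv)= u^{d/\al}\bar F_W(u)+(d/\al)\int_u^\infty v^{d/\al-1}\bar F_W(v)\mathrm dv$, which is finite since $\gamma>1$ forces $d/\al<\tau-1$. Substituting $\bar F_W(v)=\ell(v)v^{-(\tau-1)}$ and applying Karamata \eqref{eq:karamata} to the integral gives equivalence with $u^{d/\al-(\tau-1)}\ell(u)/(\tau-1-d/\al)$. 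Since $\gamma=\al(\tau-1)/d$, we have $d/\al-(\tau-1)=-(\gamma-1)d/\al$, so both terms equal $\ell(u)u^{-(\gamma-1)d/\al}$ up to constants, yielding \eqref{eq:bound-on-single} with an explicit $M_{d,\la}$.

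For \eqref{eq:bound-on-norm-2}, the natural split is at the threshold $v^\ast(y):=\|y\|^\al/(\la w)$, where the connection probability transitions from being of order $\la w v/\|y\|^\al$ to being of order $1$. Thus
\[
\Ev[N_w(\ge 1,\ge S)]\le \sum_{y:\|y\|\ge S}\Bigl[\frac{\la w}{\|y\|^\al}\int_1^{v^\ast(y)} v\, F_W(\mathrm dv) + \bar F_W(v^\ast(y))\Bigr].
\]
For the first (linear) piece, integration by parts yields $\int_1^T v\,F_W(\mathrm dv)\le \bar F_W(1)+\int_1^T\bar F_W(v)\mathrm dv$. When $\tau>2$ this is bounded by $\Ev[W]<\infty$, so the sum gives $\la w\Ev[W]\sum_{\|y\|\ge S}\|y\|^{-\al}\le M'_{d,\la}\,wS^{d-\al}$ (the integral approximation converges because $\al>d$). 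When $\tau\in(1,2)$, Karamata gives $\int_1^T\bar F_W(v)\mathrm dv\sim T^{2-\tau}\ell(T)/(2-\tau)$; substituting $T=v^\ast(y)$, the contribution becomes $(\la w)^{\tau-1}\|y\|^{-\al(\tau-1)}\ell(v^\ast(y))$, which after summation via integral approximation (using $\al(\tau-1)>d$, i.e., $\gamma>1$) gives a term of the same order as the tail piece, namely $(\la w)^{\tau-1}S^{d-\al(\tau-1)}\ell(S^\al/(\la w))$. Rewriting $d-\al(\tau-1)=-d(\gamma-1)$ identifies this with the second term in \eqref{eq:bound-on-norm-2}. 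The tail piece $\sum_{\|y\|\ge S}\bar F_W(v^\ast(y))$ is handled identically by a Karamata integral approximation.

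The one step requiring care is the handling of the slowly-varying factor $\ell(\cdot)$ inside the integral approximations of sums over $y$, since the argument $\|y\|^\al/(\la w)$ depends on $y$. I would discretize $\|y\|$ dyadically, or use monotonicity of $\ell$-free envelopes via Potter's bounds \eqref{eq:slowly-1}, so that $\ell(\|y\|^\al/(\la w))$ can be pulled out at its value at the lower limit $S$ up to an $\ve$-power loss that can be absorbed into the constant $M_{d,\la}$. This is the main technical obstacle: making the two regimes $\tau>2$ and $\tau\in(1,2)$ fit under a single constant $M_{d,\la}$ while keeping the slowly-varying prefactor $\ell(S^\al/(\la w))$ in the statement, rather than a weakened bound with an $\ve$-power correction.
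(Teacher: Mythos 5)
Your argument for \eqref{eq:bound-on-single} is the paper's argument: the bound $1-\e^{-x}\le\min\{1,x\}$, the sum over $y$ exactly as in \eqref{eq:mu-ber} giving $c_{d,\la}(wv)^{d/\al}$, and then Karamata applied to $\Ev[W^{d/\al}\ind_{\{W\ge u\}}]$. For \eqref{eq:bound-on-norm-2} the key inequalities are also the same, but you organize the computation in the opposite order: you split the $v$-integral at the $y$-dependent saturation threshold $v^\ast(y)=\|y\|^\al/(\la w)$ and integrate over $v$ first, whereas the paper splits at the \emph{fixed} threshold $S^\al/(\la w)$ (and, in the saturated regime, splits $y$ at $(\la wv)^{1/\al}$) and performs the $y$-sum first. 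The paper's ordering means the slowly varying function only ever appears evaluated at the fixed point $S^\al/(\la w)$, via the Karamata integrals $\Ev[W\ind_{\{W\le S^\al/(\la w)\}}]$ and $\Ev[W^{d/\al}\ind_{\{W\ge S^\al/(\la w)\}}]$, so the obstacle you flag at the end never arises. Your ordering does produce $\ell(\|y\|^\al/(\la w))$ inside the sum over $y$, but the difficulty you worry about dissolves: the ratio form of Potter's bound gives $\ell(\|y\|^\al/(\la w))\le C_\ve\,\ell(S^\al/(\la w))\,(\|y\|/S)^{\al\ve}$ for $\|y\|\ge S$, and after summing $\|y\|^{-\al(\tau-1)}(\|y\|/S)^{\al\ve}$ over $\|y\|\ge S$ (convergent for $\ve<(\tau-1)(\gamma-1)/\gamma$) the two $\ve$-powers of $S$ cancel exactly, leaving $S^{d-\al(\tau-1)}\ell(S^\al/(\la w))$ times a constant depending only on one fixed admissible $\ve$. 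So no weakened exponent survives and the constant can be absorbed into $M_{d,\la}$; your proof closes, at the cost of this extra Potter step that the paper's order of summation makes unnecessary.
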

\begin{proof}
 Recall the definition of the edge probabilities in $\BerBRW$ from \eqref{eq:Ber-prob}, and that we write $\mathcal{N}=\{ y: \| y\|=1 \}$ for the nearest neighbors of the origin.
 Similarly to \eqref{eq:kernel-1}, the distributional identity 
\begin{equation}\label{eq:neighbors-dist}
	N_{w}(\mathrm d v, \ge S) \ {\buildrel d \over =}   \sum_{y\in \Z^d: \| y\|\ge S
	} \ind_{\{W_y\in (v, v+\mathrm dv)\}} \left( \ind_{\{y \in \mathcal{N}\}} + \ind_{\{y\not\in \mathcal{N}\}} \mathrm{Ber}(1-\exp\{-\la w v/\|y\|^\al\}) \right)
\end{equation}
  holds
where $W_y$ is i.i.d.\ from $W$ and the Bernoulli r.v.s are (conditionally on $u$) are independent of $W_y$.
 First we aim to show \eqref{eq:bound-on-single}. By the same argument as in \eqref{eq:mu-ber},  and with $c_{d,\la}$ as in \eqref{eq:cdla-def},
 \be\label{eq:bound-on-single-2}\ba \Ev[N_{w}(\mathrm dv, \ge 0) ] &\le \Pv(W \in (v, v+\mathrm dv)) v^{d/\al}w^{d/\al} c_{d,\la}.\ea
\ee Integrating with respect to $v$ for all $v\ge u$ yields
\begin{equation}\label{eq:children-single}\ba
	\Ev[N_{w}(\ge u, \ge 0)] &\le  c_{d,\la} w^{d/\al} \Ev[W^{d/\al} \ind_{\{W\ge u\}}]=M_{d,\la} w^{d/\alpha} \ell(u) u^{d/\alpha-\tau+1}\\
	&=M_{d,\la} w^{d/\alpha} \ell(u)u^{-(\tau-1)(\gamma-1)/\gamma}, 
	\ea
\end{equation}
where we have used \eqref{eq:tail-W} and the Karamata-type theorem in \eqref{eq:karamata}  to establish that $\Ev[W^{d/\al} \ind_{\{W\ge u\}}]\le c \ell(u) u^{d/\alpha-\tau+1}$ holds for all $\al>d$ and then set $M_{d,\la}:=c \,c_{d,\la} $, and finally that $d/\alpha=(\tau-1)/\gamma$. 

Next we prove \eqref{eq:bound-on-norm-2}. Let us assume that $S>2$ so nearest-neighbor edges do not play a role. To bound $\Ev[N_{w}(\mathrm d v, \ge S)]$ we can use any of the two bounds $\la w v/\|y\|^{\al}$ or $1$ on the expectation of the Bernoulli r.v.s in \eqref{eq:neighbors-dist}.  We distinguish three cases depending on whether $v\le S^\al/wv$ or not, and whether $y\ge (\la wv)^{1/\al}$ in the latter case.
With $F_W(\mathrm dv):= \Pv(W\in (v, v+\mathrm dv))$, separating the three different cases and taking expectations,
\be \label{eq:s-norm-1}\ba
\Ev[N_w(\ge 1, \ge S)] &\le \int_{1\le v \le S^\al/(\la w)} F_W(\mathrm dv) \sum_{ y\in \Z^d: \|y \|\ge S } \la w v/\| y\|^\al \\
&+ \int_{v\ge  S^\al/(\la w)} F_W(\mathrm dv) \Bigg( \sum_{\substack{ y\in \Z^d\\  \|y \|\le (\la wv)^{1/\al} }}  1  + \sum_{\substack{ y\in \Z^d\\  \|y \|\ge (\la wv)^{1/\al} }} \frac{\la w v }{\| y\|^\al}\Bigg).
\ea \ee
Let us denote the first term on the rhs of \eqref{eq:s-norm-1} by $T_1$. The inner sum in $T_1$ is at most $\la w v 2\pi_d S^{d-\al}$, by the definition of $\pi_d$ after \eqref{eq:cdla-def}. Thus the first term is at most 
\be\label{eq:first-term} T_1\le (\la w) 2 \pi_d S^{d-\al} \Ev[W \ind_{\{W\le S^\al/(\la w)\}}].\ee
When $\tau\in (1,2)$, $\Ev[W]=\infty$ and by Karamata's theorem in \eqref{eq:karamata}, 
\[ \Ev[W\ind_{\{W\le S^\al/(\la w)\}}]= \int_1^{S^\al/(\la w)} \ell(t) t^{\tau-1} \mathrm dt \le C_{\tau} \left(S^{\al} / (\la w)\right)^{2-\tau} \ell(S^\al/(\la w)). \]
Combining this estimate with \eqref{eq:first-term}, for $\tau\in (1,2)$, 
 \be\label{eq:first-term-12} T_1 \le S^{d-\al(\tau-1)} (\la w)^{\tau-1} 2 \pi_d C_{\tau} \ell(S^\al/(\la w)), \ee
contributing to the second term in \eqref{eq:bound-on-norm-2}.
When $\tau>2$, $\Ev[W]=m_1<\infty$, thus in this case
\be\label{eq:first-term->2} T_1 \le 2 \pi_d m_1 (\la w)  S^{d-\al},  \ee
yielding the first term in \eqref{eq:bound-on-norm-2}.
Let us write $T_{21}$ and $T_{22}$ for the two integrals arising when distributing the sum in the second term in \eqref{eq:s-norm-1}. Since $S\ge 2$, the inner sum in $T_{21}$ is at most
\[ \sum_{y:\|y\|\le(\la wv)^{1/\al} } 1 \le 2 C_d\int_{0}^{(\la wv)^{1/\al}} y^{d-1} \mathrm dy = 2 C_d (\la wv)^{d/\al}.  \]
Thus, for all $\tau>1$, by Karamata's theorem in \eqref{eq:karamata}, and \eqref{eq:tail-W}
\be\label{eq:t21} \ba  T_{21} &\le  2 C_d (\la w)^{d/\al} \Ev[ W^{d/\al} \ind_{\{ W \ge S^{\al}/(\la w)\}}] \\
&\le 2 C_d  (\la w)^{d/\al} \wit C_\tau (S^{\al}/(\la w))^{d/\al -\tau +1} \frac{d}{\al} \ell(S^\al/(\la w))\\
&= 2 C_d \wit C_\tau S^{d-\al(\tau-1)} (\la w)^{\tau-1} \ell(S^\al/(\la w)),
\ea\ee
as required in \eqref{eq:bound-on-norm-2}.
We continue estimating $T_{22}$ in \eqref{eq:s-norm-1}. Here, similarly as in \eqref{eq:bound-on-single-2}, the inner sum is at most $2\pi_d (\la w v)^{d/\al}$, so, 
\be\label{eq:t22} T_{22} \le 2 \pi_d (\la w)^{d/\al} \Ev[W^{d/\al} \ind_{\{W \ge S^\al/\la w\}}] 
\le 2 \pi_d \wit C_\tau (\la w)^{\tau-1} S^{d-\al(\tau-1)}   \ell(S^\al/(\la w)),
\ee
where we have obtained the second line in a similar way as in \eqref{eq:t21}.
Combining  \eqref{eq:first-term-12}, \eqref{eq:first-term->2}, \eqref{eq:t21} and \eqref{eq:t22} yields \eqref{eq:bound-on-norm-2} by setting $M_{d,\la}$ to be the sum of all constants involved.
\end{proof}

We are ready to prove Lemma \ref{lem:types}.
\begin{proof}[Proof of Lemma \ref{lem:types}]
Note that Parts (b) and (c) are direct consequences of Part (a).
Namely, assuming (a), using the sum of a geometric series we obtain that
\[ Z_k^{\mathrm{B}} = \sum_{j=0}^{n} |W(\CG_k^{\mathrm{B}})\cap I_{k+1}^j| < \sum_{j=0}^{n}  m_k^{(1+\ve)(d/\al)(j/n) } \le 2 m_k^{(1+\ve)d/\al},   \]
where we have assumed that $i$ is so large in \eqref{eq:mk} that $m_k^{(1+\ve)(d/\al)(1/n)}-1>m_k^{(1+\ve)(d/\al)(1/n}/2$ holds for all $k\ge 1$. 
Next, $W_{\max}^{\sss{(k)}} \le m_{k+1}$ is a rewrite of Part (a) for $j=0$,  stating that $|W(\CG_k^{\mathrm{B}})\cap I_{k+1}^0| = 0$.

Next we prove Part (a).
Let us write $\Pv_W(\cdot):=\Pv(\cdot| (W_y)_{y\in \Z^d})$, with $\Ev_W[\cdot]$ the corresponding conditional expectation.
Given the vertex-weights $(W_y)_{y\in \Z^d}$, the number and vertex-weight of individuals in $\CG_k^\mathrm{B}$ only depends on individuals in $\CG_{k-1}^\mathrm{B}$, hence
 $\Pv_W(E_k|\cap_{s\le k-1}E_{s})=\Pv_W(E_k\mid E_{k-1})$. So 
\be\ba \label{eq:pa-W} \Pv_W\Big(\bigcap\limits_{s= 0}^k E_s\Big)&=\Pv_W(E_0)\prod_{s=1}^k \Pv_W\Big(E_s\mid \bigcap\limits_{j\le s-1}E_{j}\Big)\\
&=\prod_{s=0}^k (1-\Pv_W(E_s^c\mid E_{s-1})) \ge 1-\sum_{s=0}^{k} \Pv_W(E_s^c\mid E_{s-1}), \ea\ee
where we set $E_{-1}:=\Omega$ the full probability space in the second line. 
Taking expectation of both sides with respect to $(W_y)_{y\in \Z^d}$ yields
\be\ba \label{eq:pa2} \Pv\Big(\bigcap\limits_{s= 0}^k E_s\Big) \ge 1-\sum_{s=0}^{k} \Pv(E_s^c\mid E_{s-1}). \ea\ee
In what follows we inspect $\Pv(E_s^c\mid E_{s-1})$. Let us start with $s=0$, that is, $\Pv(E_0^c)$. Since the only individual in $\CG_0^{\mathrm{B}}$ is the root, $E_0$ is satisfied for all $j\le n$ once $m_0^{h(1+\ve)d/\al} >1$ and the vertex-weight of the root $W_0 < m_1$.  
Thus, for $i$ large enough such that $m_0^{h(1+\ve)d/\al} >1$, using \eqref{eq:tail-W},
\be \label{eq:a0} \Pv(E_0^c) = \Pv( W_0 > m_1) = m_1^{ - (\tau-1) } \ell(m_1 ) 
\le \big(m_0^{ (1+\ve)}\big)^{-(\tau-1)/(2(\gamma-1)) }, \ee
where we got rid of $\ell(\cdot)$ on the rhs of the first line using Potter's theorem in \eqref{eq:slowly-1}.
Let us write $E_{k,j}$ for the event that the statement in Part (a) holds for a specific $j$. Then $E_s=\cap_{j\le 1/h} E_{s,j}$ and thus $\Pv(E_s^c \mid E_{s-1})\le \sum_{j\le 1/h} \Pv( E_{s,j}^c\mid E_{s-1})$.
   We estimate $\Pv(E_{s,j}^c\mid E_{s-1})$ using Markov's inequality as
\begin{equation}\label{eq:Markov-on-term-i}
\Pv(E_{s,j}^c\mid E_{s-1})\le \Ev\left[| W(\CG_s^{\mathrm{B}})\cap I_{s+1}^j| \mid E_{s-1}\right] / m_s^{(1+\ve)(d/\al)(j/n)}
\end{equation}
To bound the numerator, note that each individual in $W(\CG_s^{\mathrm{B}})\cap I_{s+1}^j$ is a child of some individual in $W(\CG_{s-1}^{\mathrm{B}})\cap I_{s}^{h}$ for \emph{some} $h\le n$. By the inductive assumption, on $E_{s-1}$, the bound  \eqref{eq:induction-a}) holds on the number of  individuals in  $W(\CG_{s-1}^{\mathrm{B}})\cap I_{s}^{h}$, and we can use \eqref{eq:bound-on-single} from Claim \ref{claim:expected-offspring} to bound the expected number of children of these individuals. Note that the number of children of different individuals depend through the environment $(W_y)_{y\in \Z^d}$,  the expectation is linear and hence dependencies do not cause a problem. 
Since the bound in  \eqref{eq:bound-on-single} is increasing in $w$, we use the upper end of the interval $I_{s}^{h}$, that is, $m_s^{1- (h-1)/n}$, for the vertex-weight of the individuals in $W(\CG_{s-1}^{\mathrm{B}})\cap I_{s}^{h}$.
 Thus we bound
 \be \label{eq:children-multi-1}
\Ev\left[|W(\CG_s^{\mathrm{B}})\cap I_{s+1}^j|\mid E_{s-1}\right]\le \sum_{h=1}^n |W(\CG_{s-1}^{\mathrm{B}})\cap I_{s}^h|\cdot\Ev[ N_{m_s^{1-(h-1)/n}} (\ge m_{s+1}^{1-j/n}, \ge 0)], \ee
where the requirement  that the child in $\CG_s^{\mathrm{B}}$ has vertex-weight at least $m_{s+1}^{1-j/n}$, is an upper bound. Using the bound on $|W(\CG_{s-1}^{\mathrm{B}})\cap I_{s}^h|$ on the event  $E_{s-1}$, (see \eqref{eq:induction-a}), as well as \eqref{eq:bound-on-single},
 \be\ba \label{eq:children-multi-12}
\Ev&\left[|W(\CG_s^{\mathrm{B}})\cap I_{s+1}^j|\mid E_{s-1}\right] \\
&\le \sum_{h=1}^n m_{s-1}^{(h/n) (1+\ve)d/\al} \cdot M_{d,\la}\, m_s^{(1-(h-1)/n)d/\al} \ell(m_{s+1}^{1-j/n})m_{s+1}^{-(1-j/n) (\gamma-1)d/\al}.
 \ea\ee
We carry out some analysis to bound the rhs \eqref{eq:children-multi-12}. By using the recursion $m_{s}=m_{s-1}^{(1+\ve)/(\gamma-1)}$ on the middle factor, we collect factors of $m_{s-1}$ containing the exponent $h-1$ to form a geometric series:
 \[ \ba \le  M_{d,\la} m_{s-1}^{(d/\al)(1+\ve) \left(1/(\gamma-1)+1/n\right)} \ell(m_{s+1}^{1-j/n}) &m_{s+1}^{-(1-j/n) (\gamma-1)d/\al} \\
 &\cdot \sum_{h=1}^n \left(m_{s-1}^{(d/\al) (1+\ve) ( 1/(\gamma-1) -1)/n}\right)^{-(h-1)}. \ea \]
The geometric sum is at most $2$ since its parameter is at most $1/2$ for $i$ large enough.
Using the recursion on $m_{s+1}$ and  collecting exponents containing $j/n$,  the rhs of \eqref{eq:children-multi-12} is at most 
 \[ \ba &\le 2 M_{d,\la} m_{s}^{(j/n)(1+\ve)d/\al} \cdot m_s^{-(1+\ve)d/\al} m_{s-1}^{ (d/\al)(1+\ve) \left(1/(\gamma-1)+1/n\right)} \ell(m_{s+1}^{1-j/n})\\
 &\le2 M_{d,\la} m_{s}^{(j/n)(1+\ve)d/\al} \cdot \left( m_{s-1}^{ (1+\ve)d/\al }\right)^{ 1/n  -  \ve/(\gamma-1)} \ell(m_{s+1}^{1-j/n}), \ea   \]
 where we have used the recursion on $m_s$  on the factor $m_s^{-(1+\ve)d/\al}$ in the first line to obtain the second line. 
The  exponent of $m_{s-1}^{ (1+\ve)d/\al }$ is $1/n-\ve/(\gamma-1)<-\ve/(2(\gamma-1))$ by the lower bound on $n$ in the statement of Lemma \ref{lem:types}. We get rid of the slowly varying function by using Potter's bound in \eqref{eq:slowly-1} and obtain that for some constant $M_{\ve}$ independent of $i$, the following bound holds:
 \[ 
 \left( m_{s-1}^{(1+\ve)d/\al }\right)^{-\ve/(4 (\gamma-1))} \ell(m_{s+1}^{1-j/n}) < \sup_{x\ge 0}\{x^{-\xi}\ell(x)\}\le   M_{\ve},
 \]
 with $\xi:=(\tau-1)\ve(\gamma-1)/(4\gamma(1+\ve))$. The same bound holds for  all $j\le n$ since we took the smallest exponent $\xi$ that is obtained at $j=0$.
  Thus we have arrived at
  \[ 
  \Ev\left[|W(\CG_s^{\mathrm{B}})\cap I_{s+1}^j|\mid E_{i-1}\right]\le 2 M_{d,\la} M_{\ve}  m_{s}^{(j/n)(1+\ve)d/\al} \cdot \left( m_{s-1}^{ (1+\ve)d/\al }\right)^{-\ve/(4(\ga-1))}.
  \]
  Using this bound in \eqref{eq:Markov-on-term-i}, we see that the factor $m_{s}^{(j/n)(1+\ve)d/\al}$ cancels. Then, summing the rhs of \eqref{eq:Markov-on-term-i} in $j\le n$ yields that 
 \be\label{eq:esc-cond-es} \Pv(E_s^c \mid E_{s-1})\le \sum_{j=0}^n \Pv(E_{s,j}^c\mid E_{s-1}) \le (n+1) 2 M_{d,\la} M_{\ve}  \left( m_{s-1}^{ (1+\ve)d/\al }\right)^{-\ve/(4(\gamma-1))}.
 \ee 
 Note that $n+1\le 3 (\gamma-1)/\ve+1$ is a constant. More importantly, since $m_i$ grows double-exponentially, the expression on the rhs is summable in $s$ and its sum from zero to infinity is dominated by the sum of a geometric series. Choose $i$ so large that $m_0^{(d/\al)(1+\ve)\ve/(4(\gamma-1))}>2$ and then the sum is at most twice its first term (Otherwise, the constant prefactor changes only). Thus, combining this with \eqref{eq:pa2}, we obtain that 
   \eqref{eq:pa2} turns into
   \be\label{eq:esc-sum} \ba
   \Pv\Big(\bigcap_{s=0}^k E_s\Big) &\ge 1- 4 (n+1) M_{d,\la} M_{\ve} m_0^{-(d/\al)(1+\ve)\ve/(4(\gamma-1))}-\Pv(E_0^c)\\
   &\ge 1- 16 (\gamma-1)\ve^{-1} M_{d,\la} M_{\ve} \exp\left\{ - i(d/\al)(1+\ve)\ve/(4(\gamma-1)) \right\}=1-p_{\ve,i},\ea
   \ee
   where we have used \eqref{eq:a0} to estimate $\Pv(E_0^c)$, and one has to choose $\ve$ so small that $\ve/4\le \tau-1/2$ so that that term is swallowed by the second term (when increasing the constant prefactor to $16$, say). Let us set $C_\ve:=16 (\gamma-1) M_{d,\la} M_{\ve}/\ve$ in the definition of $p_{\ve, i}$ in \eqref{eq:Y-tail}.
  Note that the rhs does not depends on $k$. Taking $k$ to infinity, we obtain that 
  \be
  \Pv\Big(\bigcap_{s=0}^\infty E_s\Big)=\lim_{k\to \infty}\Pv\Big(\bigcap_{s=0}^k E_s\Big) \ge 1- p_{\ve,i}.
  \ee
finishing the proof of the first inequality in \eqref{eq:induction-b}. 
Next we prove the second inequality in \eqref{eq:induction-b}.
Let us write $\CS_k:=B^2_{S_k}(0)$.
Similarly as in \eqref{eq:pa2}, we use the Markov branching property of $\BerBRW$ across generations, which  implies that $\Pv_W(E_k\cap F_k|\cap_{s\le k-1}E_{s}\cap F_s)=\Pv_W(E_k \cap F_k\mid E_{k-1}\cap F_{k-1})$. With an analogous rewrite as in \eqref{eq:pa2}, we obtain that  
\be\ba \label{eq:pa34} \Pv_W\Big(\bigcap\limits_{s= 0}^k (E_s\cap F_s)\Big) &\ge 1-\sum_{s=0}^{k} \Pv_W((E_s\cap F_s)^c\mid E_{s-1}\cap F_{s-1})\\
&\ge 1- \sum_{s=0}^{k} \Pv_W(E_s^c\mid E_{s-1}\cap F_{s-1}) -\sum_{s=0}^{k} \Pv_W(F_s^c\mid E_{s-1}\cap F_{s-1}),  \ea\ee
where we have set $F_{-1}:=\Omega$ the full probability space again. Taking expectations on both sides with respect to the environment $(W_y)_{y\in \Z^d}$ results in
\be\label{eq:pa3} \Pv\Big(\bigcap\limits_{s= 0}^k (E_s\cap F_s)\Big) \ge 1- \sum_{s=0}^{k} \Pv(E_s^c\mid E_{s-1}\cap F_{s-1}) -\sum_{s=0}^{k} \Pv(F_s^c\mid E_{s-1}\cap F_{s-1}).\ee
First we give an upper bound on the first sum on the rhs. 
We shall inductively use that $\Pv(E_{s-1}\cap F_{s-1})\ge 1/2$ (in a bootstrap-type argument). This trivially holds for $s=0$ since $E_{s-1}\cap F_{s-1}=\Omega$ by definition. Assuming that $\Pv(E_{s-1}\cap F_{s-1})\ge 1/2$ holds for all $s\le k-1$, by the definition of conditional probability, dropping $F_{s-1}$ from the numerator and dividing by $\Pv(E_{s-1})$ yields
\[ \Pv(E_s^c\mid E_{s-1}\cap F_{s-1}) \le \frac{\Pv(E_s^c \cap E_{s-1})}{\Pv(E_{s-1}\cap F_{s-1})} \le  \frac{\Pv(E_s^c | E_{s-1})}{\Pv(E_{s-1}\cap F_{s-1})} \le 2 \Pv(E_s^c | E_{s-1}). \]
We can now use \eqref{eq:esc-cond-es} in the proof of Lemma \ref{lem:types}  to  bound the rhs, and the argument  between \eqref{eq:esc-cond-es} and \eqref{eq:esc-sum} to estimate its sum over $s$.  We arrive that the first sum in \eqref{eq:pa3} is at most $2p_{\ve,i}$. 

It remains to estimate the second sum on the rhs of \eqref{eq:pa3}. 
We start with $s=0$. In generation 0 the only individual is the root, located at $0$. By \eqref{eq:ck}, $S_0>0$, so $F_0=\{\CG_0^{\mathrm B} \cap \CS_0^c=\emptyset\} $ is always satisfied. We continue bounding the $s$th term using Markov's inequality by calculating the expected number of individuals in $\CG_s^{\mathrm{B}}\cap \CS_s^c$. Similarly as in \eqref{eq:children-multi-1}, each of these individuals is a child of some individual in $W(\CG_{s-1}^{\mathrm{B}}) \cap I_{s}^h$ for some $h\le n$. Since we condition on $E_{s-1}\cap F_{s-1}$, all these parents have norm at most $S_{s-1}$ and we can use the bound on their number from \eqref{eq:induction-a}. Note that each child in $\CS_{s}^c$ thus have to has replacement at least $S_s-S_{s-1}$. Thus, analogously to \eqref{eq:children-multi-1}, but now focusing on the \emph{location} of the children rather than on their vertex-weight,
\be \Ev[|\CG_s^{\mathrm{B}}\cap \CS_s^c| \mid E_{s-1}\cap F_{s-1}] \le \sum_{h=1}^n |W(\CG_{s-1}^{\mathrm{B}}) \cap I_{s}^h|\cdot \Ev[ N_{m_s^{1-(h-1)/n}} (\ge 1, \ge S_s-S_{s-1})], 
 \ee
where we  have again used that the expected number of children is monotone increasing in the vertex-weight of the parent. We use that $S_s-S_{s-1}\ge S_s/2$ when $i$  large enough, in \eqref{eq:mk}, and then the bound \eqref{eq:bound-on-norm-2} in Claim \ref{claim:expected-offspring}, as well as \eqref{eq:induction-a}, to obtain
\be\ba\label{eq:two-terms}  \Ev&[|\CG_s^{\mathrm{B}}\cap \CS_s^c| \mid E_{s-1}\cap F_{s-1}] \le \sum_{h=1}^n  m_{s-1}^{(h/n)(1+\ve)d/\al}   \cdot M_{d,\la}  m_s^{(1-(h-1)/n)}(S_s/2)^{d-\al}\\
&+ \sum_{h=1}^n  m_{s-1}^{(h/n)(1+\ve)d/\al} M_{d,\la} m_s^{(\tau-1)(1-(h-1)/n)} (S_s/2)^{-d(\gamma-1)}  \ell(S_s^\al/(\la m_s^{1-(h-1)/n})).\\
 \ea\ee
Denoted by $\wit T_1$ and $\wit T_2$ the first and second sum on the rhs. We start with $\wit T_1$. For simplicity of formulas let us express $S_s:=m_{s-1}^{Z/(\al-d)}$ with $Z$ obtainable from \eqref{def:z}, \eqref{eq:ck} and using the recursion $m_s=m_{s-1}^{(1+\ve)/(\gamma-1)}$. Using this recursion also on the middle factor and collecting the factors that contain the exponent $h-1$, we obtain that 
\be \wit T_{1} \le M_{d,\al} 2^{\al-d} m_{s-1}^{(1+\ve)/(\gamma-1)+(1+\ve) d/(\al n) -Z} \cdot \sum_{h=1}^n\big( m_{s-1}^{(d/\al-1/(\gamma-1))(1+\ve)/n}\big)^{(h-1)}.
\ee 
The sum on the rhs is a geometric sum with base less than $1$ since $d/\al-1/(\gamma-1)<0$. Its base is $<1/2$ for all $s$ (including $s-1=0$) for  $i$ large enough (otherwise, $2$ should be replaced by another generic constant). Thus the sum on the rhs is at most $2$.
Using the second term from \eqref{def:z}, we see that  $Z\ge (1+\ve)((1+\ve)/(\gamma-1) +d/(n\al))$, thus
\be \label{eq:t1} \wit T_1\le 2^{1+\al-d} M_{d,\al} m_{s-1}^{(1+\ve)(1/(\gamma-1) +d/(n\al))-Z}\le M_1 m_{s-1}^{-\ve/(\gamma-1)}, \ee
with $M_1:=2^{1+\al-d} M_{d,\al}$.
We continue bounding $\wit T_2$ from \eqref{eq:two-terms} analogously. Using the recursion $m_s=m_{s-1}^{(1+\ve)/(\gamma-1)}$ in its middle factor and with  $\wit Z:=z d (1+\ve)$ we write $S_s=m_{s-1}^{\wit Z/(d(\gamma-1))}$.
Then 
\[\ba  \wit T_2 \le M_{d,\la} &2^{d(\gamma-1)} m_{s-1}^{(1+\ve) ( (\tau-1)/(\gamma-1) + d/(n\al) ) -\wit Z}  \\
&\cdot \sum_{h=1}^n  \left(m_{s-1}^{(1+\ve)(d/\al - (\tau-1)/(\gamma -1))/n}\right)^{h-1} \ell\big(m_s^{z\al+1/n -(h-1)/n}/\la^{1-(h-1)/n} \big).\ea\]
Since $\tau-1=\gamma d/\al$, the exponent of $m_{s-1}$ inside the sum is $(1+\ve)d/\al (1-1/(\gamma-1)) < 0$, so, the sum would be a geometric series with parameter $<1$ if the slowly-varying function would not be present, so next we get rid of that.
Let us define 
\[ \wit M_\ell:=\max_{h\le n} \sup_{x\in \R^+} \left\{ x^{-\ve(\gamma-1)/(1+\ve)} \ell(x^{z\al +1/n -(h-1)/n} / \la^{1-(h-1)/n} ) \right\}.\]
Note that $\wit M_\ell<\infty$ by Potter's theorem in \eqref{eq:slowly-1} and since there are only  finitely many values of $h$. Importantly, $\wit M_\ell$ does not depend on $i$.
Using this bound, at the expense of an additional exponent $+\ve$ of $m_{s-1}$ outside the sum, the $\ell(\cdot)$ factor disappears from the geometric sum. This sum is then at most $2$ for $i$ large enough, and we obtain that
\[  \wit T_2 \le M_{d,\la} 2^{d(\gamma-1)+1} \wit M_\ell \cdot m_{s-1}^{(1+\ve) (d/\al) ( \gamma/(\gamma-1) + 1/n ) -\wit Z+\ve}.\]
Note that when $\wit Z \ge 2\ve + (1+\ve) (d/\al) ( \gamma/(\gamma-1) + 1/n )$, the exponent of $m_{s-1}$ is at most $-\ve$. 
Using the first term in \eqref{def:z} and that $\wit Z=d (1+\ve) z$, it is elementary to see that this is indeed the case.  Hence, setting $M_2:=M_{d,\la} 2^{d(\gamma-1)+1} \wit M_\ell$,
\be \label{eq:t2} \wit T_2 \le M_2 \cdot m_{s-1}^{-\ve}.\ee
Returning to \eqref{eq:pa3}, we can now bound $\Pv(F_s^c\mid E_{s-1}\cap F_{s-1})$ using Markov's inequality and the bounds \eqref{eq:t1} and \eqref{eq:t2} on \eqref{eq:two-terms}.
Namely, since $\Pv(F_0^c)=0$,
\[ \sum_{s=0}^k \Pv(F_s^c\mid E_{s-1}\cap F_{s-1}) \le \wit C_{\ve} \sum_{s=1}^k \left(m_{s-1}^{-\ve} + m_{s-1}^{-\ve/(\gamma-1)}\right) \le \wit C_{\ve} m_0^{-\ve}=:\wit p_{\ve, i},\]
with $\wit C_{\ve}:=\max\{M_1, M_2\}$. Finally we advance the induction hypothesis on $\Pv(E_k\cap F_k)\ge 1/2$. Note that
 $\Pv(E_k\cap F_k)\ge \Pv(\cap_{s=0}^{k} (E_s \cap F_s))$. We can then estimate $\Pv(E_k\cap F_k)$  following \eqref{eq:pa3}, which is at least $1-2p_{\ve,i}-\wit p_{\ve, i}\ge 1/2$ when $i$ is sufficiently large, finishing the proof.  
\end{proof}

  \section{`Conservative' proofs}\label{s:gamma12}
In this section we prove the theorems related to the conservative case when $\gamma\in (1,2)$. That is, we prove Theorem \ref{thm:max-displace},  the Conservative Part of Theorem \ref{thm:fpp-profile}, and finally Theorem \ref{thm:dist-cons}. The proofs of these theorems all have the similarity that their upper bound part uses the boxing method described in Section \ref{s:prelim} while their lower bound uses the coupling described in Theorem \ref{thm:thinned-BRW} to a  $\BerBRW$ and  the upper bound on the growth of this $\BerBRW$ established in Proposition \ref{prop:ber-gensizes}.
\begin{proof}[Proof of Theorem \ref{thm:max-displace} subject to Theorem \ref{thm:thinned-BRW}]
 The upper bound in \eqref{eq:max-displacement} as well as \eqref{eq:embed-12} follows directly from  Theorem \ref{thm:thinned-BRW} and Proposition \ref{prop:ber-gensizes}. Namely, by the thinning, the set of vertices graph distance $n$ away from the root in the $\SFPWL$ is a subset of the vertices in generation $n$ in $\BerBRW$.\footnote{Strictly speaking, Theorem \ref{thm:thinned-BRW} couples the $\BerBRW$ to $\SFPWL$ not in breadth-first-search way but according to the edge-weights $L$. However, it is not hard to modify the exploration algorithm and the proof of Theorem \ref{thm:thinned-BRW} to accommodate thinning in breadth-first-search order, and then this statement is true.}
Thus, the maximal displacement in $\Delta B_n^G(0)$ is dominated by that of generation $n$ of the $\BerBRW$. Proposition \ref{prop:ber-gensizes} finishes the proof. 
Recall the greedy boxing and the connectivity between the centers of subboxes in Lemma \ref{lem:layers-complete}. For the lower bound, recall that all centers in $\Gamma_{k}$ are connected to all centers in $\Gamma_{k+1}$ whenever $k\ge n_0(\ve)$. Since nearest-neighbor edges are always present, let $H(n_0(\ve)):=\min_{i\le b_{n_0(\ve)}}d_G(0, c_{n_0(\ve)}^{(i)})$ be the graph distance between $0$ and the centers in $\Gamma_{n_0(\ve)}$, and let $c_{n_0(\ve)}^\star$ be the vertex where this is attained. There is a path of  $k-n_0(\ve)$ edges between $c_{n_0(\ve)}^\star$ and any center $c_k^{(i)}$ for $k\ge n_0(\ve)$. Recall $\|c_k^{(i)}\|\ge D_{k-1}=\exp\{ D C^{k-1}\}$ with $C=(1-\ve)/(\gamma-1)$. So, the  the following lower bound on  $D_{n}^{\max}$ holds: 
\[ D_{n}^{\max}\ {\buildrel a.s.\over  \ge  }\ \exp\left\{ D \Big(\frac{1-\ve}{\gamma-1} \Big)^{n+n_0(\ve)-H(n_0(\ve))-1} \right\}.  \]
Since $n_0(\ve)\ge 0, H(n_0(\ve))\ge 0$, and $D(\ve)$ is from \eqref{eq:C-epsz},
set $Z(\ve):=D  ((\gamma-1)/(1-\ve))^{1+H(n_0(\ve))}\le 2$ for all $\ve<1/4$. An estimate of $Z(\ve)$ on $\ve$ can be obtained using \eqref{eq:n0} and the bound $H(n_0(\ve))\le \exp\{ dD C^{n_0(\ve)}\}$, but we believe that that is far off the truth. 
 \end{proof}
\begin{proof}[Proof of Theorem \ref{thm:fpp-profile}: Conservative part, upper bound]
We shall first show that, for $\ve>0$ arbitrarily small, almost surely, 
\be\label{eq:greedy-conservative} \limsup_{n\to \infty} \frac{M_n(0)}{ \sum_{i=1}^n F_L^{-1} \left(  1/b_k\right)} \le  1,\ee
where $b_k$ is as in \eqref{eq:bn}. Then we show that the denominator is at most a factor $1+\ve$ times the denominator of \eqref{eq:non-explosive-mn}.
To show  \eqref{eq:greedy-conservative}, we analyse the length of the greedy path $\gamma^{\mathrm{gr}}$ constructed in Definition \ref{def:greedy-path}. Recall that the greedy path when at a center in $\Gamma_k$, chooses the minimum edge-weight from $b_{k+1}$ (\eqref{eq:bn}) edges leading to a center $\Gamma_{k+1}$. Hence, exactly as in \eqref{eq:greedy-ray}, almost surely,
\be\label{eq:cons-decomp} M_n(0) \le |\gamma[0, c_{n+n_0(\ve)}^{\text{gr}}]|_L= |\gamma[ 0, c_{n_0(\ve)}^{\text{gr}}]|_L + \sum_{k=n_0}^{n+n_0(\ve)-1} \min\{ L_{k, 1}, L_{k, 2}, \dots, L_{k, b_{k+1}} \}.\ee
We analyse the behavior of the summands on the rhs. Note that
\be\label{eq:summands} \Pv\left( \min_{j\le b_{k+1}}\{ L_{k, j}\} > F_L^{(-1)}\left(1/b_{k}\right)\right)\le \Big(1-F_L\big(F_L^{(-1)}\left(1/b_{k}\right)\big)\Big)^{b_{k+1}} 
 \le \exp\{ -b_{k+1}/b_{k} \}. \ee
Since $b_k$ grows double-exponentially by \eqref{eq:an}, the rhs is summable in $k$. Thus, by the Borel-Cantelli lemma, there is a random $n_1\ge n_0(\ve)$ such that for all $k\ge n_1$, each term in the sum in \eqref{eq:cons-decomp} is at most $F_L^{(-1)}\left(1/b_{k}\right)$.
Hence for $n\ge n_1$, almost surely,
\be\ba \label{eq:cons-decomp-2} M_n(0)&\ {\buildrel a.s. \over \le} \ |\gamma[ 0, c_{n_0(\ve)}^{\text{gr}}]|_L + \sum_{k=n_0(\ve)}^{n_1-1} \min_{j\le b_{k+1}}\{ L_{k, j}\} + 
\sum_{k=n_1}^{n-1} F_L^{(-1)} (1/b_{k})\\
&\ {\buildrel a.s. \over \le}  \sum_{j=1}^{d D_{n_0}+n_1} L_j + \sum_{k=n_1}^{n-1} F_L^{(-1)} (1/b_{k})\ea
\ee
We explain the second line. Observe that $0$ can be connected to any vertex within $\mathrm{Box}_{n_0}$ by a nearest-neighbor path of length at most $d D_{n_0}$, where $D_{n_0}$ is the side-length of $\mathrm{Box}_{n_0}$, see \eqref{eq:rn}. Thus, the first term on the rhs of the first line of \eqref{eq:cons-decomp-2} is at most the sum of $d D_{n_0}$ many i.i.d. copies of $L$. The extra $n_1$ copies of $L$ in the second line can be chosen to be one of the variables within each  minima in the second sum in the first line, hence the inequality holds almost surely.

By the assumption that the integral in \eqref{eq:integral-crit} diverges and that $b_k$ grows double exponentially, by Claim  \ref{lemma:integral}, the sum $\sum_{k=1}^\infty F_L^{(-1)}(1/b_k)$ diverges. 
Hence, for any \emph{fixed} realization of $n_0, n_1$, and the variables $L_j$ in the first term in \eqref{eq:cons-decomp-2}, 
\[ \lim_{n\to \infty} \frac{  \sum_{j=1}^{2 d D_{n_0}+n_1} L_j + \sum_{k=n_1}^{n-1} F_L^{(-1)} (1/b_{k})}{  \sum_{k=1}^{n} F_L^{(-1)} (1/b_{k})}=1\quad a.s.\]
Combining this with \eqref{eq:cons-decomp-2} yields \eqref{eq:greedy-conservative}.
It is left to show that 
\be\label{eq:goal} \lim_{n\to \infty}\frac{\sum_{k=1}^{n} F_L^{(-1)} (1/b_{k})}{\sum_{k=1}^{n} F_L^{(-1)} (\exp\{- (\gamma-1)^{-k}\})} \le 1+\ve',\ee
where we recall that $b_k$, from \eqref{eq:bn}, grows double-exponentially with rate $C=(1-\ve)/(\gamma-1)$ where $\ve>0$ can be chosen arbitrarily small. Note that $b_k,F_L^{(-1)}$ are both monotonically  increasing  and hence $F_L^{(-1)} (1/b_{k})$ is decreasing in $k$.
Note also that the worse lower bound $b_k\ge \exp\{C^k d(D-1)/2\}$ is also valid in \eqref{eq:an}.  
As a result, with $b:=d(D -1)/2$, 
\be\label{eq:sum-1} \sum_{k=1}^{n} F_L^{(-1)} (1/b_{k}) \le F_L^{(-1)}(1/b_1)+\int_{1}^{n} F_L^{(-1)} \left( \exp\{- b C^{x}\} \right)\mathrm dx. \ee
Let us write $\xi:=1/(\gamma-1)$ and change variables so that $\exp\{-bC^x\}=\exp\{-\xi^y\}$.  Thus, \eqref{eq:sum-1} can be bounded from above by
\be\label{eq:sum-2}  F_L^{(-1)}(1/b_1)+\frac{\log \xi}{\log C}\int_{(\log C +\log b)/ \log \xi}^{(n\log C +\log b)/  \log \xi } F_L^{(-1)} \left( \exp\{- \xi^{y}\} \right)\mathrm dy. \ee
Note that by definition, $C=(1-\ve)\xi$, hence small enough $\ve>0$, $\log \xi/\log C \le 1+ 2\ve/\log \xi$.
Since $\log C/\log \xi <1$,  the integration boundary in \eqref{eq:sum-2} is $\le n$ for all $n> \log b/\log (1-\ve)$.
Thus,
\be\label{eq:sum-3}\sum_{k=1}^{n} F_L^{(-1)} (1/b_{k}) \le F_L^{(-1)}(1/b_1)+ \left(1+\frac{2\ve}{\log \xi}\right) \int_1^{n}F_L^{(-1)} \left( \exp\{ -\xi^{y}\} \right) \mathrm dy. \ee
Finally, we turn the integral  back to the sum in the denominator in \eqref{eq:goal}.
Clearly 
\be\label{eq:sum-5}  \int_1^n F_L^{-1} (\exp\{ -\xi^y\})\mathrm dy \le \sum_{k=1}^{n} F_L^{(-1)} (\exp\{ -\xi^k\}) 
\le  F_L^{(-1)}(\exp\{ -\xi\} )+\int_1^n F_L^{-1} (\exp\{ -\xi^y\})\mathrm dy \ee
Since both the integral and the sum diverge, the ratio of the sum in the middle and the integral tends to $1$. Combining \eqref{eq:sum-3} with this establishes \eqref{eq:goal} with $\ve'= 2\ve/|\log(\gamma-1)|$. 
Since $\ve'$ is arbitrarily small, combining \eqref{eq:greedy-conservative} and \eqref{eq:goal} results in  
\be\label{eq:limsup-mn} \limsup_{n\to \infty} \frac{M_n(0)}{\sum_{k=1}^{n} F_L^{(-1)} (\exp\{ -1/(\gamma-1)^k\})  }\le 1,\ee
finishing the proof of the upper bound. \end{proof}
\begin{proof}[Proof of Theorem \ref{thm:fpp-profile}, Conservative Part, lower bound, subject to Theorem \ref{thm:thinned-BRW}]
Theorem  \ref{thm:thinned-BRW} establishes a coupling between the exploration on $\SFPWL$ and the $\BerBRW$, with the important feature that 
 from each vertex in the explored cluster, the shortest paths to the origin in the \emph{thinned} $\BerBRW$ has the same distribution as the shortest path to the origin in the $\SFPWL$. 
 Recall $M_n(0)$ from \eqref{eq:mn-def}, and let us denote by $M_n^{\mathrm{B}}$ the time to reach generation $n$ in $\BerBRW$.
Under the coupling in Theorem \ref{thm:thinned-BRW}, $M_n^{\mathrm{B}} \le M_n(0)$ almost surely. Trivially, $M_n^{\mathrm{B}}$ is almost surely larger than the sum of the minimum edge-weights in each generation of the $\BerBRW$. Thus we obtain
\be\label{eq:mn-lower-1} M_n(0)\    \ {\buildrel a.s. \over \ge}  \ M_n^{\mathrm{B}}\ {\buildrel a.s. \over \ge} \ \sum_{k=1}^n \min\{L_{k,1}, \dots, L_{k, Z_k^{\text{B}}} \} \  {\buildrel a.s.\over \ge} \ \sum_{k=1}^n \min\{L_{k,1}, \dots, L_{k, c_k(\ve, Y)}\}\ee
where $Z_k^{\mathrm{B}}$ stands for the size of generation $k$  in $\BerBRW$, and we have used \eqref{eq:ck2} from Proposition \ref{prop:ber-gensizes} for an a.s. upper bound on $Z_k^{\text{B}}$, and the monotonicity of the minimum.
Next we analyse the expression on the rhs. We abbreviate $c_k:=c_k(\ve,Y)$. Similarly to \eqref{eq:summands},
\be\label{eq:min-too-small} \Pv\left(\min\{L_{k,1}, \dots, L_{k, c_k}\} < F_L^{(-1)} (1/c_{k+1}) \right)= 1- \left(1- 1/c_{k+1}\right)^{c_k}\le c_k/c_{k+1}.\ee
Since $c_k$ grows double-exponentially, the rhs is summable in $k$. Thus, by the Borel-Cantelli Lemma, there is a random $k_1$ s.t.\ for all $k\ge k_1$, the $k$th term on the rhs of \eqref{eq:mn-lower-1} is at least $F_L^{(-1)} (1/c_{k+1})$.
Hence, for $n>k_1$,
\be\label{eq:mn-lower-2}  M_n(0) \ { \buildrel a.s. \over \ge }\ \sum_{k=k_1}^n  F_L^{(-1)}(1/ c_{k+1}).\ee
Next we relate the rhs to the denominator in \eqref{eq:non-explosive-mn} in Theorem \ref{thm:fpp-profile}  using the same method as in the proof of the upper bound. We can lower bound the sum by an integral as in \eqref{eq:sum-5} and then change variables,  now using lower bounds: Here, with $C:=(1+\ve)\xi$, and $b:=Y(1+\ve) d/\al$, 
\be \label{eq:change-var-2}  \log \xi/\log C \ge 1 - \ve/\log \xi,\ee
and in this case, since $\log C/\log \xi>1$, the upper integration boundary after change of variables in \eqref{eq:sum-2} equals $n \cdot \log C/\log \xi + \log b/\log \xi$, which is \emph{larger} than $n$ for all sufficiently large $n$. 
See the proof of \eqref{eq:goal} between \eqref{eq:sum-1}-\eqref{eq:sum-5} for more details.  Ultimately, when $\ve>0$ in \eqref{eq:ck} is arbitrarily small, then 
\be\label{eq:ratio-2} \lim_{n\to \infty} \frac{\sum_{k=k_1}^n  F_L^{(-1)}(1/ c_{k+1})}{\sum_{k=1}^n  F_L^{(-1)}(\exp\{ - (\gamma-1)^{-k}\})}\ge 1-\ve'',  \ee
with $\ve''$ also arbitrarily small. Combining \eqref{eq:mn-lower-2} with \eqref{eq:ratio-2}, we obtain that 
\[ \liminf_{n\to \infty} \frac{M_n(0)}{\sum_{k=1}^n  F_L^{(-1)}(\exp\{ - 1/(\gamma-1)^k\})} =1.\]
This finishes the proof of the lower bound in \eqref{eq:non-explosive-mn}, and, with the upper bound in \eqref{eq:limsup-mn},  \eqref{eq:non-explosive-mn} is now proved. Finally, \eqref{no-integral-nonexplosion} follows by noting that this lower bound tends to infinity as $n\to \infty$, due to the equi-convergence of the sum in the denominator and the integral in \eqref{eq:integral-crit}.
\end{proof}
\begin{proof}[Proof of Theorem \ref{thm:dist-cons} subject to Theorem \ref{thm:thinned-BRW}]
We start by showing the upper bound, that uses the boxing technique again. Set $x:=m\underline e$. Recall the deterministic $n(x)$ from \eqref{def:nx} and the random  $n_0(\ve), n_x(\ve)$ from Lemma \ref{lem:layers-complete} and Corollary \ref{corr:merging}, respectively.  
In this case, we use the \emph{merging greedy paths} $\gamma_0^{\mathrm{gr}}, \gamma_x^{\mathrm{gr}}$ defined in \eqref{eq:merging-greedy} and \eqref{eq:merging-greedy-2} after Definition \ref{def:greedy-path}. 
On the event that $\{n_0(\ve), n_x(\ve)\le n(x)\}$, by Definition \ref{def:greedy-path}, for $q\in \{0,x\}$ $\gamma_q^{\mathrm{gr}}$ leaves $q$ by using the shortest path between $q$ and the centers of boxes $(c_{n_q(\ve)}^{\sss{(i)}})_{i\le b_{n_q(\ve)}}$ (see between \eqref{eq:rn}-\eqref{eq:C-epsz} for notation) that stays within $\mathrm{Box}_{n_q(\ve)}^{\sss{(q)}}$.  The center where the shortest $L$-distance path is attained is denoted by $c_{n_q(\ve)}^{\mathrm{gr}}(q)$.  From $c_{n_q(\ve)}^{\mathrm{gr}}(q), q\in\{0,x\}$, respectively, the two greedy paths follow the minimal-edge-weight towards centers of boxes in the next annulus until they reach annuli $\Gamma_{n(x)+1}^{(q)}$ at respective centers of boxes  $c_{n(x)+1}^{\mathrm{gr}}(q)$. Finally, the two paths merge  by connecting both of these last two vertices via a vertex $c_{n(x)+2}^{\mathrm{gr}}$ within $\wit{\mathrm{Box}}_{n(x)+1}$ as described in \eqref{eq:merging-greedy-2}. Thus, the path 
$\gamma_0^{\mathrm{gr}}[0, c_{n(x)+2}^{\mathrm{gr}}] \cup \gamma_x^{\mathrm{gr}}[0, c_{n(x)+2}^{\mathrm{gr}}]$ connects $0$ and $x$ and its $L$-length provides an upper bound on $\mathrm d_L(0,x)$. This is what we analyse now.

Recall  the tail behavior of  $n_q(\ve)$, for $q\in\{0,x\}$ from \eqref{eq:n0} in Lemma \ref{lem:layers-complete} and from \eqref{eq:nx-ve} in Corollary \ref{corr:merging}:
\be\label{eq:n0-tail1} \Pv(n_q(\ve) \ge K ) \le  2 \exp\big\{ -c_1 \exp\{ \ve c_2 C(\ve)^{K} \}  \big\}. \ee
Then, for $q\in\{0,x\}$,
\be\label{eq:min-333} | \gamma_q^{\mathrm{gr}}[q, c_{n_q(\ve)+1}^{\mathrm{gr}}(q)] |_L \le  |\gamma[ q, c_{n_q(\ve) }^{\text{gr}}(q)]|_L + \sum_{k=1}^{n(x)} \min\{ L_{k, 1}^{\sss{(q)}}, L_{k, 2}^{\sss{(q)}}, \dots, L_{k, b_{k+1}}^{\sss{(q)}} \} \ee
where we fill the sum on the rhs with newly drawn  i.i.d.\ $L_{k,j}^{\sss{(q)}}$ for all $k\le n_q(x)$. Let us define the random variables $k_x(q)\le n(x), q\in\{0,x\}$ as the last index in the sum on the rhs of \eqref{eq:min-333}  that is larger  than $F_L^{(-1)}(1/b_k)$.
That is,
\be\label{eq:kxq} k_x(q):= \max\{k: k \le n(x):  \min_{j\le b_{k+1}}\{ L_{k,j}^{\sss{(q)}}  \} \ge F_L^{(-1)}(1/b_k) \}.\ee
Such a $k_x(q)$ exists by \eqref{eq:summands} and the Borel-Cantelli lemma. 
For $k\le k_x(q)$ we can use one of the $L_{k,j}^{\sss{(q)}}$ inside each mimina, so that combining this with \eqref{eq:min-333}, we have the upper bound 
\be \label{eq:dl-as-1} d_L(0,x) \  {\buildrel a.s. \over \le} \sum_{j=1}^{d D_{n_0(\ve)}+k_x(0)+1}  L_j^{(0)}  +  \sum_{j=1}^{d D_{n_x(\ve)} + k_x(x)+1}   L_j^{(x)}   
 +2\sum_{k=1}^{n(x)} F_L^{(-1)} (1/b_{k}) ,  \ee
where the $L_j^{(q)}$ are independent collections of i.i.d.\ variables, and we took the worst possible case for connecting $q$ to $c_{n_q(\ve)}^{\mathrm{gr}}(q)$  via nearest-neighbor edges.
 By adding one edge-weight to each of the first two terms on the rhs of \eqref{eq:dl-as-1}, we also take into account the last two edges connecting $c_{n(x)+1}^{\mathrm{gr}}(0)$ to $c_{n(x)+1}^{\mathrm{gr}}(x)$.
We would like to show that, for $q\in \{0,x\}$, 
\be\label{eq:first-neglig} \Pv\Bigg(\sum_{j=1}^{d D_{n_q(\ve)} +k_x(q)+1} L_j^{(q)}  \ge \delta/2 \sum_{k=1}^{n(x)} F_L^{(-1)} (1/b_{k}) \Bigg)\to 1\ee 
as $m \to \infty$ and $x=\lfloor m\underline e \rfloor$. To show this, we argue as follows. 
As $m \to \infty$, the sequences $n_0(\ve), k_x(0)$ converge to their unrestricted limits (dropping the restriction $k\le n(x)$ in \eqref{eq:kxq}), thus the lhs within the probability sign converges to a proper random variable. Then, \eqref{eq:first-neglig} for $q=0$ follows directly from this since the rhs within the probability sign tends to $0$.  Further, $n_x(\ve), k_x(x)$ come from boxing around vertices $x=\lfloor m \underline e\rfloor$, which  is different for each $x$, thus, these are tight sequences of random variables, with respective uniform tail bounds given by \eqref{eq:k1-tail} and \eqref{eq:n0-tail1}. The tightness of the sequences $n_x(\ve), k_x(x)$ implies the tightness of $dD_{n_x(\ve)}+ k_x(x)+1$ and thus the tightness of the sum on the lhs within the probability sign of \eqref{eq:first-neglig}. Since the rhs tends to infinity as $n(x)\to \infty$, \eqref{eq:first-neglig} follows  by the definition of tightness.
Combining \eqref{eq:first-neglig} with \eqref{eq:dl-as-1}, for any $\de>0$, as $m\to \infty$ and with $x=\lfloor m \underline e \rfloor$, 
\be\label{eq:conv-inp-cons} \Pv\Bigg( d_L(0,x)\  {\buildrel \over \le} \ (1+\de)  2\sum_{k=1}^{n(x)} F_L^{(-1)} (1/b_{k})\Bigg) \to 1.\ee
Next, we change this to \eqref{eq:dist-cons-1111}.
Recall from \eqref{eq:an} that $b_k\le \exp\{ b C^n\}$ with $C=(1-\ve)/(\gamma-1)$ and for $b=d(D-1)>0$. 
Following \eqref{eq:sum-1} and \eqref{eq:sum-2}, as well as  $\log \xi/\log C<1+2\ve/\log\xi$, we obtain
\be\label{eq:sum-44}  \sum_{k=1}^{n(x)} F_L^{(-1)} (1/b_{k}) \le F_L^{(-1)} (1/b_{1}) +\Big(1+\frac{2\ve}{\log \xi}\Big) \int_{(\log C +\log b)/ \log \xi}^{(n(x)\log C +\log b)/  \log \xi } F_L^{(-1)} \left( \exp\{- \xi^{y}\} \right)\mathrm dy. \ee
Finally, using the bound on the  integral in \eqref{eq:sum-5}, 
\be\label{eq:sum-45} \sum_{k=1}^{n(x)} F_L^{(-1)} (1/b_{k}) \le  F_L^{(-1)} (1/b_{1})+ \Big(1+\frac{2\ve}{\log \xi}\Big) \sum_{k=1}^{\lceil (n(x)\log C +\log b)/  \log \xi\rceil  } F_L^{(-1)}(\exp\{ -\xi^k\}),  \ee
where $\lceil z\rceil=\min\{y\in \Z: y\ge z\}$. 
Using that $\lfloor z\rfloor\le z, \lceil z\rceil \le z+1$, as well as  \eqref{def:nx}, the summation boundary on the rhs is, for $x=\lfloor m\underline e \rfloor$, at most 
\be \label{eq:sum-46} \lceil (n(x)\log C +\log b)/  \log \xi\rceil   \le \log \log m/\log \xi + (- \log D  + \log (d(D-1))/\log \xi +1.\ee
Since the summands tend to zero, as $m \to \infty$, a constant deviation from   $\log \log m/\log \xi$ in terms of the number of summands is negligible in the limit. Thus, combining  \eqref{eq:sum-45} with \eqref{eq:sum-46} and setting $1+\de':=(1+\de)(1+ 2\ve/\log \xi)$, yields that 
\be\label{eq:upper-dist-cons}
(1+\de) \sum_{k=1}^{n(x)} F_L^{(-1)} (1/b_{k}) \le (1+\de') \sum_{k=1}^{\lceil \log \log m/\log \xi\rceil } F_L^{(-1)} (\exp\{-\xi^k\})  \ee
for arbitrary small $\delta'>0$, for all $m$ sufficiently large. This inequality, combined with \eqref{eq:conv-inp-cons} finishes the proof of the upper bound of Theorem \ref{thm:dist-cons}.

We turn to prove the lower bound.
We use the upper bounding $\BerBRW$ from Theorem \ref{thm:thinned-BRW} and  Proposition \ref{prop:ber-gensizes} on its maximal displacement and generation sizes. 
Let us first set the vertex vertex-weights $(W_z)_{z\in \Z^d}$, and then use two explorations on $\SFPWL$, one started from $0$ and one from $x$, with two dominating  $\BerBRW$s, denoted by $\mathrm{BRW}^{(0)}, \mathrm{BRW}^{(x)}$, \emph{independent} of each other  conditioned on $(W_z)_{z\in \Z^d}$. We add $(0)$ and $(x)$ either as a superscript or as an argument to quantities related to the two explorations started from a root individual located at $0$ and $x$, respectively.
Edges are present independently conditioned on the vertex-weights, so the two explorations are also independent on $\SFPWL$ as long as we guarantee that they \emph{stay in disjoint boxes}.  For $q\in\{0,x\}$, let $B_n^{G,L}(q), \Delta B_n^{G,L}(q)$ denote the vertices $y\in \Z^d$ from which the $L$-shortest path to $q$  contains at most/precisely $n$ edges in $\SFPWL$. Suppose for some $N_x(0), N_x(x)\ge  0$ we can guarantee that  $\Delta B_{N_x(0)}^{G,L}(0)$ is disjoint of  $\Delta B_{N_x(x)}^{G,L}(x)$. Then, any shortest path from $0$ to $x$ must intersect these sets and thus 
\be\label{eq:intersect} \mathrm{d}_L(0,x)\  {\buildrel a.s. \over \ge}\ \min\{ \mathrm{d}_L(0, y): y \in \Delta B_{N_x(0)}^{G,L}(0)\} + \min\{ \mathrm{d}_L(0, z): z \in \Delta B_{N_x(x)}^{G,L}(x)\}. \ee
Let us consider the  two disjoint boxes \[ \mathrm{B'}(0):=[-\|x \|/2, \| x\|/2]^d, \qquad \mathrm B'(x):=[x-\|x \|/2, x+ \|x\| /2]^d.\]   We will find below   $N_x(0), N_x(x)\ge  0$ that satisfies 
\[ \bigcup_{i\le N_x(0)}\CG_i^{\mathrm B}(0) \subseteq \mathrm{B'}(0)\quad  \text{and} \quad \bigcup_{i\le N_x(x) }\CG_i^{\mathrm B}(x)  \subseteq \mathrm{B'}(x). \] 
The edge sets within $\mathrm{B'}(0), \mathrm{B'}(x)$ in $\SFPWL$ are independent conditioned on $(W_z)_{z\in \Z^d}$, hence, it is possible to couple $B_{N_x(q)}^{G,L}(q)$ to $\mathrm{BRW}^{(q)}$ stopped at generation $N_x(q)$. Heuristically this is true since the difference between an exploration on $\SFPWL$ and a $\BerBRW$ is that the $\BerBRW$ is allowed to have new offspring upon returning to an already visited vertex, while the exploration is not. In other words,
Theorem \ref{thm:thinned-BRW}  can be extended to hold for both explorations jointly in this case. 
Therefore we obtain that, for all $i\le N_x(q), q\in \{0,x\}$,
\be \label{eq:delta-subset}\Delta B_i^{G,L}(q) \subseteq \CG_i^{\mathrm B}(q).\ee
The minimum decreases if we increase the set, thus, by \eqref{eq:intersect} and \eqref{eq:delta-subset},
\be\ba \label{eq:dl-lower} \mathrm{d}_L(0,x)\  &{\buildrel a.s. \over \ge}\ \min\{ d_L(0, y): y \in \CG_{N_x(0)}^{\mathrm B}(0)\} + \min\{ d_L(0, z): z \in \CG_{N_x(x)}^{\mathrm B}(0)(x)\} \\&= M_{N_x(0)}^{\mathrm B}(0) + M_{N_x(x)}^{\mathrm B}(x), \ea \ee
where the two variables on the rhs are independent.   
We now modify \eqref{eq:ck2} in Proposition \ref{prop:ber-gensizes} to hold for the two explorations to determine $N_x(0), N_x(x)$. 
The two explorations become dependent after leaving their respective boxes $\mathrm{B}'(0), \mathrm{B}'(x)$ and thus the definition of $Y(\ve)$ as the one after \eqref{eq:hi-estimate} needs to be modified to maintain independence. 
In order to do so, similarly as in \eqref{eq:wit-ek}, for $q\in\{0,x\}$,  we define the event 
\be\label{eq:ekq} \wit E_k^{\sss{(q)}}(\ve, i):= \{ Z_k^{\mathrm{B}}(q) \le c_k(\ve, i)\}\cap \Big\{ \CG_k^{\mathrm{B}}(q) \cap \left( B^{2}_{S_k(\ve, i)}(q)\right)^c  =\emptyset \Big\},\ee Recall $S_k(\ve,i)$ from \eqref{eq:ck} and define the deterministic number
\be\label{eq:nxe} N_x(\ve, i):=\max\{k: S_k(\ve,i)\le \|x\|/2\}=\left \lfloor \frac{\log\log(\| x\|/2)- \log (iz)}{\log ((1+\ve)/(\gamma-1))}\right\rfloor. \ee 
Then, we set, for $q\in\{0,x\}$, 
\[ \wit H_i^{\sss{(q)}} := \bigcap_{k=0}^{N_x(\ve, i)} \wit E_k^{\sss{(q)}}(\ve, i).\] 
Heuristically speaking, $\wit H_i^{\sss{(q)}}$ is the event that the $\mathrm{BRW}^{(q)}$ grows double exponentially with rate $(1+\ve)/(\gamma-1)$ and prefactor at most  $i$ \emph{before it leaves the box $B'(q)$}. Since $H_i\subset \wit H_i^{\sss{(q)}}$, \eqref{eq:hi-estimate} remains valid for $(\wit H_i^{\sss{(q)}}), q\in\{0,x\}$ as well, namely,
\be\label{eq:hi-estimate-2}\sum_{i=1}^\infty \Pv\big((\wit H_i^{\sss{(q)}})^c\big)\le \sum_{i=1}^\infty\Big( 1-\Pv\Big( \bigcap_{k=0}^{N_{x}(\ve, i)} E_k^{\sss{(q)}}(\ve,i) \cap F_k^{\sss{(q)}}(\ve, i)\Big)\Big) \le \sum_{i=1}^\infty (2p_{\ve, i} + \wit p_{\ve, i}) <\infty. \ee 
By the Borel Cantelli lemma only finitely many $(\wit H_i^{\sss{(q)}})^c$ events occur and hence 
\be\label{eq:witye} \wit Y_q(\ve):=\min\{k:  \forall i\ge k, \wit H_i^{\sss{(q)}}\} \ee 
 is a.s. finite and $\wit Y_0(\ve), \wit Y_x(\ve)$ are independent, since they are determined on a disjoint vertex and edge-set of the graph. The meaning of $\wit Y_q(\ve)$ is again that $\mathrm{BRW}^{(q)}$ grows double exponentially with rate $(1+\ve)/(\gamma-1)$ and prefactor $\wit Y_q(\ve)$ \emph{before it leaves the box $B'(q)$}.
Finally, the tail estimate \eqref{eq:Y-tail} remains valid for $\wit Y_q(\ve)$ as well by \eqref{eq:hi-estimate-2}. 
 With $\wit Y_q(\ve)$ at hand, the last generation where $\mathrm{BRW}^{(q)}$ is still inside box $B'(q)$ is precisely $N_x(\ve, \wit Y_q(\ve))=:N_x(q)$, with $N_x(0), N_x(x)$ \emph{independent}. We can use these in \eqref{eq:dl-lower}, and also that $Z_k^{\mathrm{B}}(q)\le c_k(\ve, \wit Y_q(\ve))$ by \eqref{eq:ekq} and the definition of $\wit Y_q(\ve)$. Since $M_n^{\mathrm{B}}(q)$ is a.s. larger than the sum of the minimum edge-weights in each generation (see the inequality \eqref{eq:mn-lower-1}), \eqref{eq:dl-lower} can be further bounded from below by
\be \label{eq:dl-lower-2} \mathrm{d}_L(0,x)\  { \buildrel a.s. \over \ge}\sum_{q=0,x} \sum_{k=1}^{N_x(q)} \min\{ L_{k,1}^{\sss{(q)}} , \dots, L_{k, c_k(\ve, \wit Y_q(\ve))}^{\sss{(q)}} \}. \ee
By \eqref{eq:min-too-small}, the $k$th term on the rhs is at least $F_L^{(-1)}(1/c_k(\ve, \wit Y_q(\ve)))$ with probability at least $1-c_{k}(\ve, \wit Y_q(\ve)))/c_{k+1}(\ve, \wit Y_q(\ve)))$.
Using  \eqref{eq:ck}, the error probabilities are summable and thus defining 
\be\label{eq:kxq-def} \wit k_x(q):=\max\Big\{k: k\le N_x(q) \text{ and } \min\{ L_{k,1}^{\sss{(q)}} , \dots, L_{k, c_k(\ve, \wit Y_q(\ve))}^{\sss{(q)}} \} \le F_L^{(-1)}(1/c_k(\ve, \wit Y_q(\ve)))  \Big\},\ee
with  $\wit k_x(0), \wit k_x(x)$ being independent. The tail of $\wit k_x(q)$ can be estimated using \eqref{eq:ck} as follows:
\be\label{eq:kxx-tail} \Pv\left(\wit k_x(q) \in [K, N_x(q)]\mid\! N_x(q), \wit Y_q(\ve)\right) \le \sum_{k\ge K} \frac{c_{k}(\ve, \wit Y_q(\ve)))}{c_{k+1}(\ve, \wit Y_q(\ve)))} \le  2 \exp\Big\{\!-  \wit Y_q(\ve) \wit c \Big(\frac{1+\ve}{\gamma-1}\Big)^{K}  \Big\},\ee
with $\wit c=(1+\ve)(2+\ve-\gamma) d / (\al (\gamma -1))$. Continuing to bound \eqref{eq:dl-lower-2}, we arrive at
\be\label{eq:dl-lower-3} d_L(0,x) \ {\buildrel a.s. \over \ge} \   \sum_{k=\wit k_x(0)}^{N_x(0)}  F_L^{(-1)}(1/ c_{k+1}(\ve, \wit Y_0(\ve))) + \sum_{k=\wit k_x(x) }^{N_x(x)}  F_L^{(-1)}(1/ c_{k+1}(\ve, \wit Y_x(\ve))). \ee
To proceed we use a change of variables.
We estimate the sum using the lower bound in \eqref{eq:sum-5}, with $C:=(1+\ve)/(\gamma-1)$, $b:=\wit Y_q(\ve) (1+\ve) (d/\al) C$, 
\be\ba\label{eq:sum-estimate-2}  \sum_{k=\wit k_x(q)}^{N_x(q)}  F_L^{(-1)}(1/ c_{k+1}(\ve, \wit Y_q(\ve)))& \ge \int_{\wit k_x(q)}^{N_x(q)} F_L^{(-1)}(\exp\{ - b C^x\})\mathrm d x\\
&=\frac{\log \xi}{\log C} \int_{(\wit k_x(q)\log C + \log b)/\log \xi}^{(N_x(q) \log C + \log b)/\log \xi} F_L^{(-1)}(\exp\{ -\xi^x\})\mathrm d x\\
&\ge \Big(1-\frac{\ve}{\log \xi}\Big) \sum_{\lceil (\wit k_x(q) \log C+ \log b)/\log \xi\rceil +1}^{\lfloor(N_x(q) \log C+\log b)/\log \xi\rfloor }  F_L^{(-1)}(\exp\{-\xi^k\}).   \ea  \ee
where in the last step we used \eqref{eq:change-var-2} and the upper bound in \eqref{eq:sum-5} to relate the integral to a sum again. 
We investigate the upper summation boundary on the rhs. Note that $N_x(q)=N_x(\ve, \wit Y_q(\ve))$ as defined in \eqref{eq:nxe}, \eqref{eq:witye} and $b=\wit Y_q(\ve) (1+\ve) (d/\al) C$. Using that $\lfloor y \rfloor \ge y-1$, the upper summation boundary in the last row of \eqref{eq:sum-estimate-2} can be bounded from below as follows, for $x=\lfloor m\underline e\rfloor$
\be\label{eq:upper-boundary}\ba  \left \lfloor \frac{N_x(q) \log C+ \log b}{\log \xi}\right \rfloor&\ge \frac{\log \log (\| x\|/2)  - \log (\wit Y_q(\ve) z C) + \log \big(\wit Y_q(\ve) (1+\ve) (d/\al) C \big) }{\log \xi}-1\\
&=\frac{\log \log ( m/2)}{\log \xi} - \frac{ \log \big(\xi(1+\ve)d/( \al z)\big) }{\log \xi}. \ea\ee
Importantly, the random variable $\wit Y_q(\ve)$ \emph{cancels} and the obtained value is a deterministic constant away from $\log \log \| x\|/\log \xi$. 
Next we investigate the lower summation boundary in \eqref{eq:sum-estimate-2}.
Let us introduce a partial sum and  for any fixed $\de>0$
\be Q(t):=\sum_{k=1}^{\lfloor t\rfloor} F_L^{(-1)}(\exp\{-\xi^k\}), \qquad K_\delta(t):=\max\{k: Q(k)< Q(t) \delta/4\}.\ee
Since $Q(t)\to \infty$ as $t\to \infty$ by the assumed divergence of $\mathbf{I}(L)$ in \eqref{eq:integral-crit}, $K_\delta(t) \to \infty$ as $t\to \infty$.
Combining \eqref{eq:dl-lower-3} with \eqref{eq:sum-estimate-2} and \eqref{eq:upper-boundary}, we have shown that, with $\wit C:=\log (\xi(1+\ve) d/(\al z))/\log \xi$,
\be\label{eq:dl-lower-5} \mathrm{d}_L(0,\lfloor m\underline e\rfloor) \ge 2 (1-\frac{\ve}{\log \xi}) Q\Big(\log\log (m/2)/\log \xi - \wit C\Big)  \\ 
 - \sum_{q\in\{0,x\}}Q\Big(\frac{\wit k_{x}(q) \log C + \log b}{\log \xi}+2\Big).
 \ee
By choosing $\ve$ small enough and $m$ sufficiently large, we can obtain that 
\[ (1-\frac{\ve}{\log \xi}) Q\Big(\log\log (m/2)/\log \xi - \wit C\Big) \ge (1-\de/2) Q(\log \log m/\log \xi).  \]
For the convergence in probability, we would like to show that the last term in \eqref{eq:dl-lower-5} is at least $- Q(\log \log m/\log \xi)\delta/2$ with probability tending to $1$. By the definition of $K_\delta(t)$, the complement of this event is contained in 
\be\label{eq:ptoas} \bigcup_{q\in\{0,x\}}\Big\{\frac{\wit k_{x}(q) \log C + \log b}{\log \xi}+2\ge K_\de(\log \log m/\log \xi) \Big\}.\ee
Thus, recalling that $b=\wit Y_q(\ve)(1+\ve)(d/\al) C$ from before \eqref{eq:sum-estimate-2}, we bound the probability of each of these two events 
by a union bound as follows:
\be\label{eq:as-vs-p} \ba \Pv&\left( \frac{\wit k_{x}(q) \log C + \log b}{\log \xi}+2\ge K_\de(\log \log m/\log \xi) \right) \\
  &\quad \quad \quad \quad \le 
  \Pv\left( \log \big(\wit Y_q(\ve) (1+\ve)(d/\al) C\big) \ge (K_\de(\log \log m/\log \xi)-2) \log \xi/2 \right) \\
  &\quad \quad \quad \quad \quad   +  \Pv\left(\wit k_{x}(q) \ge (K_\de(\log \log m/\log \xi)-2) \log \xi/(2\log C)   \right) 
\ea  
\ee
By  \eqref{eq:kxx-tail} and \eqref{eq:Y-tail}, and since $K_\delta (\log \log m/\log \xi)$ tends to infinity with $m$,  both probabilities on the rhs tend to zero as $m\to \infty$. 
This finishes the proof of the lower bound.

We comment on why this proof could not be strengthened to showing a.s. convergence:  $\wit k_x(x)$ and $\wit Y_x(\ve)$ come from explorations where the root $x=\lfloor m \underline e\rfloor$  is different for each $x$, thus, these are tight sequences of r.v.s, with respective uniform tail bounds given by \eqref{eq:kxx-tail} and \eqref{eq:Y-tail}. 
Thus in principle the proof could be strengthened to a.s.\ convergence if the probability of the event in \eqref{eq:ptoas} were summable, by a Borel-Cantelli type argument. We argue why this is not the case. The lhs of \eqref{eq:as-vs-p} is summable if $\log b$ is comparable to $K_\delta(\log \log m/\log \xi)$ only finitely often. For this, using \eqref{eq:Y-tail}, one needs that 
\be\label{eq:y-summable} \exp\{- \wit C \e^{K_\delta (\log \log m/\log\xi)\log \xi } \}\ee
is summable in $m$, for some $\wit C$. Since the terms in $Q(t)$ are strictly less than $1$ and monotonously decreasing, $K_\delta (t) \le  t \delta /4$, implying that  the expression in \eqref{eq:y-summable} is never summable in $m$. Heuristically speaking, large values do occur frequently enough in the sequence $(\wit Y_{\lfloor m \underline e \rfloor}(\ve))_{m\ge 1}$, and this means that the lower summation boundary in \eqref{eq:sum-estimate-2} starts from a significantly higher value than $1$. 
We do believe that a.s. convergence is actually never possible. However, $\wit Y_x(\ve)$ is \emph{not} independent for different values of $x$, so the second Borel-Cantelli lemma here cannot be used.
\end{proof}

\section{The dominating branching random walks}\label{s:explore-proof}
In this section we describe the three process coupling mentioned in Theorem \ref{thm:thinned-BRW} and prove Theorem \ref{thm:thinned-BRW}. The coupling is developed by coupling the \emph{exploration process} on the three graphs together, that we describe now.  
\subsection{The exploration algorithm.}\label{s:explore}
Our exploration algorithm runs on $\SFPWL,$ $\PoiBRW$, and $\BerBRW$ in Section \ref{s:explore-BP} \emph{at the same time}, providing a three-process coupling of 
the exploration algorithm of $\CB_t^{L, \mathrm{S}}(0)\subseteq \CB_t^{L}(0)$ in $\SFPWL$ to $\CB_t^{L, \mathrm{B}}(0), \CB_t^{L, \mathrm{P}}(0)$ in $\BerBRW, \PoiBRW$, respectively. 

We would like to emphasise the following: 
We describe the coupling of the exploration by describing the exploration algorithm on $\PoiBRW$, and applying two consecutive thinning procedures that yield the exploration on $\BerBRW$ and on $\SFPWL$, respectively. When we thin individuals corresponding to multiple edges and their descendants in the $\PoiBRW$, we obtain $\BerBRW$, while a global thinning - we thin all those individuals and their descendants who are at already visited spatial locations - yields the exploration on $\SFPWL$. Quantities related to the exploration on $\PoiBRW$, $\BerBRW$, $\SFPWL$ get a superscript (or subscript) $\mathrm{P, B, S}$, respectively.

Recall that we write  $\tau_n$ for the time to reach the $n$th new vertex from the origin. The exploration algorithm runs in \emph{discrete stages} $n=0,1, \dots$, where a stage corresponds to exploring one more individual (vertex) in $\PoiBRW$. We keep track of the  true `time' $t$ as well, that is, stage $n$ of the exploration corresponds to time  ($L$-distance from the origin) $\tau_n^{\mathrm{P}}$. At time $\tau_n^{\mathrm{S}}$, the first passage exploration (FPE) has discovered all the individuals that are reachable from the origin on a path with $L$-distance at most $\tau_n^{\mathrm{S}}$, that is, it has discovered the individuals in $B^{L, \mathrm{S}}_{\tau_n^{\mathrm{S}}}(0)$ together with their vertex-weights and the shortest $L$-weighted path leading to them.     
We use the following lists during the exploration:
\begin{enumerate}
\item $\mathcal{W}=\big\{ (W_y)_{y\in \Z^d}\}$, containing the environment.
\item 
$\CE_n=\{ (\emptyset, 0, W_0, \ind^{\mathrm{B}}_E(0), \ind^{\mathrm{S}}_E(0) ), (j_1, M_{j_1}, W_{M_{j_1}}, \ind^{\mathrm{B}}_E(j_1), \ind^{\mathrm{S}}_E(j_1))$, \\$\dots, (j_n, M_{j_n}, W_{M_{j_n}}, \ind^{\mathrm{B}}_E(j_n), \ind^{\mathrm{S}}_E(j_n))\}$, 
with $n+1$ sublists denoting the \emph{name}, \emph{location}, and \emph{weight} of the first $n+1$ explored individuals in $\PoiBRW$, as well as their indicators $\ind^{\mathrm{B}}_E(\cdot), \ind^{\mathrm{S}}_E(\cdot)$, whether they are explored in (and thus belong to) $\BerBRW$ and $\SFPWL$ as well. We set $j_0:=\emptyset$ the root individual.
\item $\CU_n^{\mathrm{q}}=\big\{ ( (0, M_1), L_{j_1}), \dots \big\}$, for $q=$P, B, S, the edge structure and the corresponding edge-weights (the value of $L$) on shortest-length paths between the explored vertices in $\PoiBRW, \BerBRW, \SFPWL$, respectively.
 \item $\CA_n$, with sublists containing the \emph{name, location, vertex-weight, edge-weight to the parent, indicator of being Bernoulli-thinned and scale-free-thinned} of the active individuals, i.e., those that are reachable from an explored individual via a direct edge (in a similar format as that of $\CE_n$). Here  $\ind^{\mathrm{B}}_A(\cdot), \ind^{\mathrm{S}}_A(\cdot)$ stands for the indicator whether the individual belongs to the active set in $\BerBRW, \SFPWL$, respectively.
\item $\CR_n$, the \emph{remaining} edge-weight to the parent for each active individual at time $\tau_n^{\mathrm{P}}$. 
\end{enumerate}
Let $\CF_n:=\sigma( (\CE_k, \CU_k^{\mathrm{P}},\CA_k, \CR_k)_{k\le n})$ be the sigma-algebra generated by the lists by stage $n$ and $\CG_n:=\sigma(\CF_{n-1}\cup\{ \CE_n, \CU_n^{\mathrm{P}}\})$ be an intermediate sigma algebra  ($\CF_{n-1}\subset \CG_n\subset \CF_n$) before determining the active individuals at stage $n$.
For a list of lists $\CC$, let $\CC[i,j]$ denote the $j$th element of the $i$th sublist of $\CC$, while $\CC[\cdot, j]$ denotes the list formed by the $j$th elements of every sublist.  Finally, for a list with elements from $\R$, 
$\CB-x$ denotes a list where we subtract $x$ from each element of $\CB$. Our exploration process is as follows:
 
\emph{(1) (Initialization)} At stage $n=0$,  $\CE_0:=\{(0, \emptyset, W_0,1,1)\}$, that is, the root individual is explored, and its location $M_\emptyset:=0$ and vertex vertex-weight is revealed. 
 We then
 \begin{enumerate}
\item[(i)] Draw a Poisson random variable $D_0^{\text{P}}$ as in \eqref{eq:degree-dist-1} with $x:=\emptyset$, and draw $2d+D_0^{\text{P}}$ many i.i.d.\ variables from distribution $L$, yielding $(L_{\emptyset1}, \dots, L_{\emptyset (2d+D_0^{\text{P}})})$. 
 
\item[(ii)] \emph{(Multiple-edge-thinning step)} Draw the locations of $D_0^{\mathrm{P}}$ many  individuals i.i.d.\ from distribution \eqref{eq:mark-choose} with $x:=\emptyset$. Set the other $2d$ locations to be $0\pm e_i^{(d)}$ for $i\le d$, the where $e_i^{(d)}$ is the $i$th unit vector in $d$ dimensions.
   Mark the second and further edges to any location as \emph{Bernoulli-thinned}, i.e.,  keep only the first occurrence of every location..  Note that the thinning is independent of the realisation of the edge-weights.
\item[(iii)] \emph{(Setting indicator variables)} For each $i\le 2d+D_{j_n}^{\mathrm{P}}$, set 
\be  \ind^{\text{B}}_A(i):= \ind\{ i  \text{ is not Bernoulli-thinned in Step (1ii)}\}=: \ind^{\text{S}}_A( i). 
\ee 
\end{enumerate}
We start the list of \emph{active individuals}. With $\wit D^{\mathrm{P}}_0:=2d+D_0^{\text{P}}$,
\be\label{eq:active-initial} \CA_0:=
\Big\{ \left(1, M_1, W_{M_1}, L_1, \ind^{\text{B}}_A(1), \ind^{\text{S}}_A(1)\right), 
\dots, 
\left(D_0^{\text{P}}, M_{\wit D_0^{\text{P}}}, W_{\wit D_0^{\text{P}}}, L_{\wit D_0^{\text{P}}}, \ind^{\text{B}}_A(\wit D_0^{\text{P}}), \ind^{\text{S}}_A(\wit D_0^{\text{P}})\right) \Big\}. \ee
 We denote by $\sum_{i=1}^{2d+D_0^{\text{P}}}\ind^{\text{B}}_A(i)=:D_0^{\mathrm{B}}$ the degree of the root in the $\BerBRW$. 
 We call this the \emph{Bernoulli thinning}, since we have dropped all multiple edges. 
 We initialize the remaining edge-weight list by  taking the 4th element of each sublist in $\CA_0:$ 
 \be \CR_0:=\{L_1, L_2, \dots, L_{\wit D_0^{\text{P}}}\}.\ee
 For $q=\mathrm{S, B, P}$, we set $\tau_0^{\mathrm{q}}:=0$, and $\CU^{\mathrm{q}}_0=\{\}$ empty.

\emph{(2) (Next-to-explore)} The next (active) individual to explore is the one with minimal remaining edge-weight.
Thus,  let
\be j_n:=\CA_{n-1}[\arg\min \CR_{n-1},1].\ee

\emph{(3) (Time increasing)}
\begin{enumerate}
\item[(i)]
\emph{(Time increasing for $\PoiBRW$)}
Let us set $\tau_{n}^{\mathrm{P}}:=\tau_{n-1}^{\mathrm{P}}+\min \CR_{n-1}$. The \emph{real} time of the process is thus $t=\tau_n^{\mathrm{P}}$ after step $n$.
\item[(ii)] \emph{(Time increasing for $\BerBRW$)}
For $\mathrm{q}=\mathrm{B,S,P}$, let  $f_{\mathrm{q}}(n-1)\le n-1$ be the largest index $k$ for which $\tau^{\mathrm{q}}_k$ is defined after step $n-1$. Thus, there are $f_{\mathrm{B}}(n-1)$ many individuals explored after step $n-1$ in $\BerBRW$.
We \emph{only increase $\tau^\mathrm{B}$} in the exploration of $\BerBRW$ if the individual $j_n$ is part of the Bernoulli-exploration, i.e., it is active in $\BerBRW$.  In this case we increase the index by one and set the last exploration time to $\tau^{\mathrm {B}}_{f_{\mathrm{B}}(n-1)+1}:=\tau_n^{\mathrm{P}}$, the actual time after step $n$. If $j_n$ is Bernoulli-thinned, then we neither increase the number of vertices explored in $\BerBRW$ nor $\tau^{\mathrm{B}}$.
In formulas:
\be\label{eq:tau-b-set}\ba 
\ind^{\text{B}}_E(j_n)&:=\ind^{\text{B}}_A(j_n),\\
\tau_{f_{\mathrm{B}}(n-1)+ \ind_E^{\mathrm{B}}(j_n)}^{\mathrm{B}} &:= \tau_{f_{\mathrm{B}}(n-1) }^{\mathrm{B}}+\ind_E^{\mathrm{B}}(j_n)\left(\tau_n^{\mathrm{P}}-\tau^{\mathrm{B}}_{k_{\mathrm{B}}(n-1)}\right).\ea\ee
\item[(iii)] 
\emph{(Global thinning and time increasing for $\SFPWL$)} 

If $\ind^{\mathrm{S}}_A(j_n)=0$, i.e., the individual $j_n$ is  \emph{not} not active in $\SFPWL$, then we neither increase the last exploration time $\tau^{\mathrm{S}}_{f_{\mathrm{S}}(n)}$, nor add $j_n$ to the explored list in $\SFPWL$. Thus in this case $\ind^{\mathrm{S}}_E(j_n):=\ind^{\mathrm{S}}_A(j_n)=0$.

If $\ind^{\mathrm{S}}_A(j_n)=1$ then $j_n$ is a good possible candidate to explore also in $\SFPWL$. Thus  we check if there is an $i\le n-1$, such that the individual $j_i$ has $\ind^{\text{S}}_E(j_i)=1$ \emph{and} $M_{j_i}=M_{j_n}$. In other words, if we have already explored an individual located at $M_{j_n}\in \Z^d$ in $\SFPWL$. 
If yes, then we call the individual $j_n$ \emph{scale-free thinned} and define $\ind^{\text{S}}_E(j_n):=0$. In this case we do not increase the last exploration time $\tau^{\text{S}}$. 
If there is no such individual, we call the location $M_{j_n}$ \emph{new in $\SFPWL$} and keep the earlier value $\ind^{\text{S}}_E(j_n):=\ind^{\text{S}}_A(j_n)=1$, and increase the last exploration time $\tau^{\text{S}}$ to $\tau_n^{\mathrm{P}}$.
In formulas:
\be\label{eq:tau-s-set} \ba 
\ind^{\text{S}}_E(j_n)&:=\ind^{\text{S}}_A(j_n)\ind\{ M_{j_n} \text{ new in } \SFPWL\}\\
\tau^{\text{S}}_{f_{\text{S}}(n-1) + \ind^{\text{S}}_E(j_n)} &:= \tau^{\text{S}}_{f_{\text{S}}(n-1)} +\ind^{\text{S}}_E(j_n) \left(\tau_n^{\mathrm{P}} - \tau^{\text{S}}_{f_{\text{S}}(n-1)} \right)\\
\ea \ee
\end{enumerate}
We shall see in (5ii) that $\ind_A^{\mathrm{S}}(j_n)\le \ind_A^{\mathrm{B}}(j_n)$, which, combined with comparing \eqref{eq:tau-b-set} to \eqref{eq:tau-s-set} yields that $\ind_E^{\mathrm{S}}(j_n)\le \ind_E^{\mathrm{B}}(j_n)$. Thus, the sequence $(\tau_k^{\mathrm{S}})_{k\ge 1}$ is  a subsequence of $(\tau_k^{\mathrm{B}})_{k\ge 1}$ which, in turn, is a subsequence of $(\tau_k^{\mathrm{P}})_{k\ge 1}$.

\emph{(4) (Renewing the explored list, the used edge-list, and the remaining edge-weight list)}
We refresh
 \[ \CE_n:=\CE_{n-1}\cup \{
 (j_n, M_{j_n}, W_{M_{j_n}}, \ind_E^{\mathrm{B}}(j_n), \ind^{\text{S}}_E(j_n) )\}.\] 
For $\mathrm{q}=\mathrm{P, B, S}$, we add the (location of the endpoints of the) edge between $p(j_n), j_n$ and its length $L_{j_n}$ to the used edges \emph{if} $j_n$ was part of the exploration:
\[ \CU_n^{\mathrm{q}}:=\left\{\ \  \ba &\CU_{n-1}^{\mathrm{q}}\cup \{((M_{p(j_n)}, M_{j_n}), L_{j_n})
\} \mbox{ when }\ind_E^{\mathrm{q}}(j_n)=1,\\
&\CU_{n-1}^{\mathrm{q}} \mbox{ otherwise,} \ea \right. \]
 where $\ind^{\mathrm{P}}_E(j_n):=1$ always.

\emph{(5) (Renewing the active list)}  
To refresh $\CA_{n-1}$, we proceed similarly as in Step (1):
\begin{enumerate}
\item[(i)]Draw the number of children $D_{j_n}^{\mathrm{P}}$ of $j_n$ from the distribution as in \eqref{eq:degree-dist-1} with $x:=j_n$.  
  Draw $\wit D_{j_n}^{\mathrm{P}}:=2d+D_{j_n}^{\text{P}}$ many i.i.d.\ edge-weights from distribution $L$: $(L_{j_n1}, \dots L_{j_n \wit D_{j_n}^{\mathrm{P}}}).$
\item[(ii)] \emph{(Multiple-edge-thinning step)} Draw $D_{j_n}^{\mathrm{P}}$ many  locations i.i.d.\ from the  distribution in \eqref{eq:mark-choose} with $x:=j_n$. Set the other $2d$ locations to be $M_{j_n}\pm e_i^{(d)}$, for $i\le d$. 
Mark second and further occurrences of the same location as Bernoulli-thinned.
Note that the thinning is independent of the realisation of the edge-weights. 
\item[(iii)] \emph{(Setting indicator variables)} For each $i\le \wit D_{j_n}^{\mathrm{P}}$, and $q=B,S$, set 
\be \label{eq:indicator-set} \ind^{\text{q}}_A(j_n i):=\ind^{\text{q}}_E(j_n ) \ind\{ j_ni  \text{ is not Bernoulli-thinned in Step (5ii)}\}. 
\ee 
\end{enumerate}
We then remove $j_n$ from the list of actives and append its children to it:
\be\label{eq:active-refresh} \ba\CA_n:=\CA_{n-1} &\setminus \Big\{\big(j_n, M_{j_n}, W_{M_{j_n}}, L_{j_n}, \ind^{\text{B}}_A(j_n), \ind^{\text{S}}_A(j_n)\big)\Big\} \\
&\quad \cup\Big\{ \big(j_n1, M_{j_n1}, W_{M_{j_n1}}, L_{j_n1}, \ind^{\text{B}}_A(j_n 1), \ind^{\text{B}}_A(j_n 1)\big), \dots \Big.\\\
&\quad\Big.\dots, \big(j_n \wit D_{j_n}^{\mathrm{P}}, M_{j_n \wit D_{j_n}^{\mathrm{P}}}, W_{j_n \wit D_{j_n}}, L_{j_n \wit D_{j_n}^{\mathrm{P}}}, \ind^{\text{B}}_A(j_n \wit D_{j_n}^{\mathrm{P}}), \ind^{\text{B}}_A(j_n \wit D_{j_n}^{\mathrm{P}})\big)\Big\}.\ea\ee

 \emph{(6) (Renewing the remaining edge-weight list)} Finally, we renew the remaining edge-weight list by (a) removing the minimum edge-weight that led to $j_n$, (b) decreasing the other remaining edge-weights in the list by $\min \CR_{n-1}$, (c) appending the new, i.i.d.\ edge-weights to all the newly active children of $j_n$. In formulas,
\[ \CR_n:=\left(\left(\CR_{n-1}- \min \CR_{n-1} \right)\setminus\{0\}\right)\cup \{ L_{j_n 1}, \dots, L_{j_n \wit D_{j_n}^{\text{P}}}\}.\]

\emph{(7) (Repetition)} Increase stage number by $1$ and repeat from (2).
  
Note that the exploration on $\BerBRW$ and on $\SFPWL$ only differs in Step (3ii) versus (3iii). Namely, in $\SFPWL$, an extra thinning is executed by checking that the location of the newly explored vertex has not been visited before in the exploration of $\SFPWL$.

Next we extend the definition of the used edge list and the explored vertex list to the `real time' (that equals $L$-distance from $0$) of the exploration. 
For a time $t\ge 0$  let us define $n(t):=\max\{n: \tau_n^{\mathrm{P}} \le t\}$. 
Let us define  $\wit \CU_t^{\mathrm{q}}:=\CU_{n^{\mathrm{P}}(t)}^{\mathrm{q}}$, and set $\wit \CE_t^{\mathrm{q}}$ be the elements in the list $\CE_{ n^{\mathrm{P}}(t)}$ that have $\ind_E^{\mathrm{q}}(\cdot)=1$.  Similarly, let us denote by $\CE_{n}^{\mathrm{q}}$ those elements in $\CE_n$ that have $\ind_E^{\mathrm{q}}(\cdot)=1$.
\subsection{Coupling SFP to the BRWs}\label{s:coupling-proof}
After having described the joint exploration, we are ready to prove Theorem \ref{thm:thinned-BRW}. First we rephrase Theorem \ref{thm:thinned-BRW} in terms of the exploration. 
\begin{proposition}\label{lemma:thinned-BRW}
Consider $\CB^{L, \mathrm{S}}_t(0)$  as in Theorem \ref{thm:thinned-BRW}. For any $t\ge 0$, the distribution of the location of vertices and their vertex-weights within $\CB^{L, \mathrm{S}}_t(0)$ have the same distribution as   $\wit \CE_{t}^{\mathrm{S}}$, and the edges on the shortest paths towards $0$ in  $\CB^{L, \mathrm{S}}_t(0)$ have the same distribution as $\wit \CU_{t}^{\mathrm{S}}$  in the thinned exploration described above. 
More precisely, a.s.\ under the coupling,
\[ \CB_t^{L, \mathrm{S}}(0)=( \wit \CE_{t}^{\mathrm{S}}, \wit \CU_{t}^{\mathrm{S}})\subseteq  \CB_t^{L,\mathrm{B}}(0)=(\wit \CE_{t}^{\mathrm{B}}, \wit \CU_{t}^{\mathrm{B}} )\subseteq \CB_t^{L,\mathrm{P}}(0)=( \wit \CE_{t}^{\mathrm{P}}, \wit \CU_{t}^{\mathrm{P}}).\]
\end{proposition}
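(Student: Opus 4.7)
The plan is to verify that the joint exploration algorithm of Section \ref{s:explore} realises the three processes on one probability space, and that the two successive thinnings produce the claimed inclusions pointwise. I would split the verification into three steps: (i) that the unthinned algorithm reveals $\CB_t^{L,\mathrm{P}}(0)$ in first-passage order; (ii) that the flag $\ind_E^{\mathrm{B}}$ isolates $\CB_t^{L,\mathrm{B}}(0)$; (iii) that the flag $\ind_E^{\mathrm{S}}$ isolates the shortest-path tree $\CB_t^{L,\mathrm{S}}(0)$ of $\SFPWL$.

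For (i), the environment $(W_z)_{z\in\Z^d}$ is drawn once. For each explored individual $a$ at location $M_a$, Steps~(1i)/(5i) draw $D_a^{\mathrm{P}}\sim \mathrm{Poi}(\lambda W_{M_a}\sum_{y\ne M_a}W_y/\|y-M_a\|^\alpha)$ children and (1ii)/(5ii) assign them i.i.d.\ locations via the multinomial kernel \eqref{eq:mark-choose}. By the classical thinning/colouring property of Poisson random variables, this is distributionally identical to drawing, independently for each $y\in\Z^d$, a Poisson number $N_y^{\mathrm{P}}(a)$ of children as in \eqref{eq:poi-vector}. The i.i.d.\ edge-weights are attached by construction. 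The Dijkstra-type bookkeeping in $\CR_n$ (Step~(6)) maintains the invariant that the entries of $\CR_{n-1}$ are the residual $L$-distances at time $\tau_{n-1}^{\mathrm{P}}$ of the active individuals along their unique parent-edge; choosing $j_n$ as the argmin and advancing time by $\min\CR_{n-1}$ therefore explores the individuals of $\PoiBRW$ in order of $L$-distance to the root, so $(\wit\CE_t^{\mathrm{P}},\wit\CU_t^{\mathrm{P}})=\CB_t^{L,\mathrm{P}}(0)$.

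For (ii), the multiple-edge thinning of Steps (1ii)/(5ii) replaces, for each ordered parent-child location pair $(a,y)$, the Poisson count $N_y^{\mathrm{P}}(a)$ by the Bernoulli $\ind\{N_y^{\mathrm{P}}(a)\ge 1\}$, whose success probability is $1-\exp(-\lambda W_{M_a}W_y/\|y-M_a\|^\alpha)$, matching \eqref{eq:Ber-prob}. The propagation rule \eqref{eq:indicator-set} ensures that an individual is retained in the Bernoulli exploration if and only if her entire ancestry is retained, which gives exactly the law of $\BerBRW$. Since the Bernoulli-retained edges are a subset of the Poisson ones and the edge-weights are unchanged, the inclusion $\CB_t^{L,\mathrm{B}}(0)\subseteq \CB_t^{L,\mathrm{P}}(0)$ holds almost surely under the coupling.

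The delicate step, and the expected main obstacle, is (iii). In $\BerBRW$, for an ordered pair of locations $(x,y)$ the two parent-child Bernoullis $B_{x\to y}^{\mathrm{B}}$ and $B_{y\to x}^{\mathrm{B}}$ are independent, whereas in $\SFPWL$ the single unordered edge $\{x,y\}$ corresponds to one Bernoulli with the same marginal parameter \eqref{eq:connect}. I would argue as follows: the global thinning of Step~(3iii) discards any child at a location already visited in the $\SFPWL$-exploration, so at most one of the two ordered Bernoullis is ever consulted to decide membership of $\{x,y\}$ in the $\SFPWL$-tree; which one is consulted is determined entirely by the $\CF_{n-1}$-measurable exploration order. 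Because both Bernoullis are i.i.d.\ with the correct parameter \eqref{eq:connect}, this measurable selection yields the right marginal for each unordered edge, and the independence across distinct unordered pairs (conditional on $(W_z)_{z\in\Z^d}$) is inherited from that of the $\BerBRW$ Bernoullis. A standard Dijkstra argument then shows that the first arrival at any location $y$ in the $\SFPWL$-exploration records $L$-distance $d_L(0,y)$, so the surviving pair $(\wit\CE_t^{\mathrm{S}},\wit\CU_t^{\mathrm{S}})$ is exactly the shortest-path tree $\CB_t^{L,\mathrm{S}}(0)$. The pointwise inclusion $\CB_t^{L,\mathrm{S}}(0)\subseteq \CB_t^{L,\mathrm{B}}(0)$ is immediate from \eqref{eq:tau-s-set}, which enforces $\ind_E^{\mathrm{S}}(j_n)\le \ind_E^{\mathrm{B}}(j_n)$ stage by stage. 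The residual technical work is a careful filtration/measurability argument verifying that at stage $n$ the Bernoulli actually consumed by the $\SFPWL$-exploration for a freshly visited unordered pair is independent of $\CF_{n-1}$ with the correct parameter, which hinges on the fact that the alternative (reverse-direction) Bernoulli is never subsequently used.
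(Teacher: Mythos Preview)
Your proposal is correct and follows essentially the same route as the paper. The paper organises the argument as an explicit induction on the stage number~$n$, with the inductive key being the claim~$(\star)$ that, conditional on the past, the edges from $\CE_{n-1}^{\mathrm S}$ to fresh locations have the right (product Bernoulli) law~\eqref{eq:dist-identity}; your step~(iii), framed as a measurable selection of one of the two directed Bernoullis per unordered pair, is the same content, and the ``careful filtration/measurability argument'' you flag as residual work is precisely what the paper's induction carries out.
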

\begin{proof}[Proof of Theorem \ref{lemma:thinned-BRW} subject to Proposition \ref{lemma:thinned-BRW}]
Theorem \ref{lemma:thinned-BRW} an immediate consequence of Proposition \ref{lemma:thinned-BRW}.
\end{proof}
\begin{proof}[Proof of  Proposition \ref{lemma:thinned-BRW}]
Recall from Step (3ii)  that for $\mathrm{q}=\mathrm{S,B, P}$,  $f_{\mathrm{q}}(n):=\sum_{i=1}^n \ind^{\mathrm{q}}_E(j_i)$  denotes the number of vertices explored in each of the processes until step $n$, respectively. Define $f^{(-1)}_{\mathrm{q}}(n)$ as the inverse function of $f_{\mathrm{q}}$.
Naturally, since  $\ind^{\mathrm{P}}_E(\cdot)=1$ always, $f_{\mathrm{P}}(n)=f^{(-1)}_{\mathrm{P}}(n)=n$ while  $f^{(-1)}_{\mathrm{S}}(n)$ gives the \emph{step number} ($\ge n$) when the $n$th vertex is explored in $\SFPWL$.  To show that the distribution of $\CB_t^{L,\mathrm{S}}(0)$ in $\SFPWL$ and $( \wit \CE_{t}^{\mathrm{S}}, \wit \CU_{t}^{\mathrm{S}})$ are the same we argue by induction. 

\emph{Induction hypothesis.} First note that
 $( \wit \CE_{t}^{\mathrm{S}}, \wit \CU_{t}^{\mathrm{S}}) $ does not change between $\tau_{n-1}^{\mathrm{S}}$ and $\tau_n^{\mathrm{S}}$, thus it is enough to check the distributional identity at times $(\tau_n^{\mathrm{S}})_{n\ge 0}$. Since the sequence $(\tau_n^{\mathrm{S}})_{n\ge 0}$ is a subsequence of $(\tau_n^{\mathrm{P}})_{n\ge 0}$, it is enough to check the distributional identity at the latter sequence, equivalently, at each step of the exploration algorithm.   
Thus, our induction hypothesis is that
$( \wit \CE_{\tau_{i}^\mathrm{P}}^{\mathrm{S}}, \wit \CU_{\tau_{i}^\mathrm{P}}^{\mathrm{S}})=( \CE_{i}^{\mathrm{S}},\CU_{i}^{\mathrm{S}})_{i\le n-1}$ has the same distribution as
 $(\CB^{L, \mathrm{S}}_{\tau_{i}^\mathrm{P}}(0))_{i\le n-1}$. 

\emph{Initialisation.} Setting $t=0$, corresponding to $n=0$, yields $\CB^{L, \mathrm{S}}_0(0)\ { \buildrel d \over =}\  ( \wit \CE_{0}^{\mathrm{S}}, \wit \CU_{0}^{\mathrm{S}}) $, since in both sets the vertex set contains $0$, the vertex-weight distribution is the same (a copy of $W$),  while the edge set is empty. This initialises the induction.

 \emph{Advancing the induction.} 
 Note that during Steps (1ii) and (5ii), we thin all multiple edges, while in Step (3iii) we thin those edges that go to locations that have been already allocated to earlier explored individuals in $\SFPWL$, i.e., an application of Step (3iii) corresponds to discovering a (not necessarily edge-disjoint) cycle, i.e., a location $y\in \Z^d$ that is reachable from $0$ on more than one path. Further note that the result of Step (5iii) is that \emph{all the descendants} of a thinned vertex will also be thinned (both in $\BerBRW$ as well as in $\SFPWL$).

Recall that we write $\CE_n^{\mathrm{q}}[\cdot, \ell]$ the list composed by the $\ell$th element of every sublist within $\CE_n^{\mathrm{q}}$, for $\mathrm{q}=\mathrm{S,B,P}$. In particular, $\CE_n^{\mathrm{S}}[\cdot,2]$ gives all the explored locations within the first $n$ steps of the algorithm that are part of $\SFPWL$.

By induction, $( \CE_{n-1}^{\mathrm{S}},\CU_{n-1}^{\mathrm{S}})$ has the same distribution as
 $\CB^{L, \mathrm{S}}_{\tau_{n-1}^\mathrm{P}}(0)$. First we show that 
 
 \emph{($\star$) The edges and their $L$-lengths leading out of $( \CE_{n-1}^{\mathrm{S}},\CU_{n-1}^{\mathrm{S}})=\CB^{L, \mathrm{S}}_{\tau_{n-1}^\mathrm{P}}(0)$ in 
$\Z^d$ that lead to locations outside $\CE_{n-1}^{\mathrm{S}}[\cdot, 2]$ have the same distribution  in the exploration and in $\SFPWL$}. 
\vskip1em

  We thus need to check that the \emph{set of new locations  and their vertex-weights} available from any location $M_{j_i} \in \CE_{n-1}^{\mathrm{S}}[2]$ (corresponding to $j_i\in \CE_{n-1}^{\mathrm{S}}[1]$) in the exploration is the same in the two models, since once this is given, upon the consecutive exploration of these new locations, the thinning is done in a natural way that keeps only the shortest path to the root. 
Recall that the sigma-algebra $\CG_i$ contains all the lists until step $i-1$ but only $(\CE_i, \CU_i)$ but not $\CA_i, \CR_i$, and that we write $\mathcal{N}(x):=\{ y: \| y-x\|=1 \}$ for the nearest-neighbors of $x\in\Z^d$. 

The coupling described between \eqref{eq:poi-vector}-\eqref{eq:mark-choose}
and the fact that the locations of active individuals are determined in Step (5ii) of the algorithm implies  that the number of edges leading to each location $y\in \Z^d, y\not \in \CN(M_{j_i})$ of the individual $j_i\in\CE_{n-1}^{\mathrm{S}}[\cdot,1]\subseteq \CE_{n-1}^{\mathrm{B}}[\cdot, 1]$ at location $M_{j_i}$  in the $\BerBRW$ is distributed as
 \be\label{eq:poi-i} N_y^{\mathrm{B}}(M_{j_i})\mid \CG_i \ {\buildrel d \over =}\ \ind{\Big\{\mathrm{Poi}\Big( \la  W_{M_{j_i}} \frac{W_y}{\|y-M_{j_i}\|^\al}\Big) \ge 1\Big\}}, \ee
 Thus, the probability that the location  $y \notin \mathcal E_{n-1}^{\mathrm{S}}[\cdot,2], y\not\in \CN(M_{j_i})$ appears among the list of children of $j_i\in\CE_{n-1}^{\mathrm{S}}[\cdot,1]$
in the $\BerBRW$ at time $\tau_i^{\mathrm{P}}$ (after step $i$) is given by
\be\label{eq:dist-identity}\Pv(y  \leftrightarrow M_{j_i} \mid \CG_{i})=\Pv\left( \mathrm{Poi}\Big( \frac{W_{M_{j_i}}   W_{y}}{\|y - M_{j_i}\|^\al}\Big)\ge 1 \ \big\lvert \ \CG_{i}\right)= 1- \exp \left\{ - \la\frac{W_{M_{j_i}}   W_{y}}{\|y - M_{j_i}\|^\al}   \right\},\ee 
which is exactly the same as having an edge between location $M_{j_i}$ with vertex-weight $ W_{M_{j_i}}$ and any $y$ with vertex-weight $W_y$ from distribution $W$ in the scale-free percolation model. Since the environment is fixed in advance, and $(W_y)_{y\in \Z^d}$ is independently drawn from everything else, the vertex-weight distribution is matching. The multinomial thinning of the Poisson variable in Steps (1ii) and (5ii) ensures that the edges from $M_{j_i}$ going to different locations $y\in \Z^d$ are conditionally independent given $\CG_{i}$. 
The addition of nearest neighbor edges in Steps (1ii) and (5ii) ensures that the nearest-neighbor edges $M_{j_i}\pm e_k$, for all $k\le d$, are always present.

   Let us write $\tau_{(j_i)}\in \{\tau_{1}^{\mathrm{S}}, \dots,\tau_{f_{\mathrm{S}}(n-1)}^{\mathrm{S}}\}$ for the time when we explored $j_i$. Suppose now  that the edge $(M_{j_i}, y)$ is present, say, it belongs to the child $j_ik$ of $j_i$.
 The fact that we thinned every multiple edge in the active list ensures that each such edge $(M_{j_i}, y)$ is allocated only \emph{one} edge-weight with distribution $L$. Thus, the exploration will explore the location $y$ from $M_{j_i}$ precisely at time $\tau_{(j_i)}+L_{j_ik}$. At that moment, (which is after stage $n-1$ by the assumption that $y\notin \CE_{n-1}^{\mathrm{S}}[\cdot,2]$) the location $y$ might have already been explored via another path that is not contained entirely in $\CE_{n-1}^{\mathrm{S}}$, thus it might be thinned by Step (3iii). Nevertheless, the rate of exploring this edge, given that it is there, is precisely the same in the two models.\footnote{This is why deleting multiple edges in Steps (1ii) and (5ii) was necessary.  If we would not have done this, then whenever there are $k$ multiple edges between $y$ and $M_i$, we explore $y$ from $M_i$ first at time $\tau_{(i)}+\min (L_{i,1}, \dots, L_{i,k} )$, which does not have the right distribution.}
 The distribution of the other edge-weights leaving $(\CE_i^{\mathrm{S}}, \CU_i^{\mathrm{S}})$ continue to match for the following reason: when we determined $j_i$, we took the minimum outgoing remaining edge-weight  from $\CR_{i-1}$, thus increasing time by $\min \CR_{i-1}=\tau_{i}^{\mathrm{P}}-\tau_{i-1}^{\mathrm{P}}$. Then, all the outgoing edge-weights on edges leaving the set  $\CB^{L, \mathrm{S}}_{\tau_{i}^{\mathrm{P}}}(0)$ versus $\CB^{L, \mathrm{S}}_{\tau_{i-1}^{\mathrm{P}}}(0)$ have to be decreased by $\tau_{i}^{\mathrm{P}}-\tau_{i-1}^{\mathrm{P}}$, and this is precisely what the algorithm does in Step (6). This shows $(\star)$.
 
To finish the induction, we argue as follows. By $(\star)$, from every individual in $\CE_{n-1}^{\mathrm{S}}[\cdot, 1]$ and from $\CB^{L, \mathrm{S}}_{\tau_{n-1}^{\mathrm{P}}}(0)$, the number of edges and corresponding lengths to every new location in $\Z^d$ has the same distribution. In other words, the edge-weights within $\CR_{n-1}$ that correspond to leading to new locations, also have the same distribution. Taking the minimum of the remaining edge-weight list $\CR_{n-1}$ yields $j_n$, the individual  to be explored at step $n$. At this point we have to distinguish several cases:

If $j_n$ has $\ind^{\mathrm S}_A (j_n)=0$, then $j_n$ is not active in the $\SFPWL$ exploration, she will not become explored in $\SFPWL$, $\ind^{\mathrm S}_E(j_n)=0$ will be set in Step (3iii), and the last exploration time $\tau_{f_{\mathrm{S}}(n-1)}^\mathrm{S}$ in $\SFPWL$ remains unchanged by \eqref{eq:tau-s-set}, so there is nothing to prove.

If $\ind^{\mathrm S}_A (j_n)=1$, and the location $M_{j_n}$ is not a new location\footnote{Since $\CE^{\mathrm{S}}_{n-1}$ contains those elements of $\CE_{n-1}$ that have $\ind^{\text{S}}_E(j_i)=1$,  this means precisely that there is an $i\le n-1$, such that the individual $j_i\in \CE_{n-1}[1]$ has $\ind^{\text{S}}_E(j_i)=1$ \emph{and} $M_{j_i}=M_{j_n}$.}, i.e., $M_{j_n} \in \CE^{\mathrm{S}}_{n-1}[2]$,
then the location $M_{j_n}$ have been explored earlier. That is, the shortest path to this location is not via $j_n$, i.e., exploring $j_n$ would lead to a longer path.
In this case, Step (3iii) thins $j_n$, that is, $\ind^{\mathrm S}_E(j_n)=0$ and the last exploration time $\tau_{f_{\mathrm{S}}(n-1)}^\mathrm{S}$ remains unchanged again by \eqref{eq:tau-s-set}.

In the previous two cases, all the descendants of $j_n$ will also have $\ind_A^{\mathrm{S}},\ind_E^{\mathrm{S}}=0$ so they will not be added to the exploration either.

If  $\ind^{\mathrm S}_A (j_n)=1$ and $M_{j_n}\notin \CE^{\mathrm{S}}_{n-1}[2]$, then the location $M_{j_n}$ is explored for the first time in $\SFPWL$.  In this case, 
\[ \CE_{n}^{\mathrm{S}}=\CE_{n-1}^{\mathrm{S}}\cup\{(j_n, M_{j_n}, W_{j_n}, 1,1\} \mbox{ and }\CU_{n}^{\mathrm{S}}=\CE_{n-1}^{\mathrm{S}}\cup\{((M_{p(j_n)}, M_{j_n}), L_{j_n})\}.\] 
We argue that these two lists together have the same distribution as $\CB^{L, \mathrm{S}}_{\tau_{n}^{\mathrm{P}}}(0)$.  By induction again, $\CU_{n-1}^{\mathrm S}$ describes the shortest paths to $0$ from all other vertices within $\CE_n^{\mathrm{S}}[1]=\CE_{n-1}^{\mathrm{S}}[1]\cup \{j_n\}$  to $0$ within $\CB^{L, \mathrm{S}}_{\tau_{n-1}^{\mathrm{P}}}(0)$, so, we only have to show that the distribution of the location and vertex-weight of $j_n$ is that of the last discovered vertex in $\CB^{L, \mathrm{S}}_{\tau_{n}^{\mathrm{P}}}(0)$ and that the shortest path from this vertex is contained in $\CU_n^{\mathrm{S}}$. 
These are direct consequences  of $(\star)$: Step (2) of the algorithm determined the next-to-explore vertex $j_n$, based on which individual is closest to the explored vertices in $\CB^{L, \mathrm{S}}_{\tau_{n-1}^{\mathrm{P}}}(0)$ in terms of $L$-distance.  This individual happened to be at a new location in the exploration, thus, by $(\star)$, its vertex-weight was drawn i.i.d.\ from $W$ at stage $p(j_n)$, and we an interpret the length $\min \CR_{n-1}$ as the distance of the vertex $M_{j_n}\in \Z^d$ from the set $\CB^{L, \mathrm{S}}_{\tau_{n-1}^{\mathrm{P}}}(0)$.
  Since the location $M_{j_n}$ is explored via the individual $j_n$ with parent $p(j_n)$ for the first time, any consecutive exploration of the location $M_{j_n}$ will happen later, thus yielding longer paths. Thus, the shortest path from $M_{j_n}$ to the origin starts with  the edge $(M_{j_n}, M_{p(j_n)})$. By induction, the path $M_{p(j_n)}, M_{p(p(j_n))}, \dots, 0$, contained in $\CU_{n-1}^{\mathrm{S}}=\wit \CU_{\tau_{n-1}^{\mathrm{P}}}^{\mathrm{S}}$ is the shortest path from $M_{p(j_n)}$ to the origin. Thus,  the path $M_{j_n}, M_{p(j_n)}, M_{p(p(j_n)}, \dots, 0$ gives the shortest path to the origin from $M_{j_n}$ and is contained in $\CU_{n}^{\mathrm{S}}$. This establishes the statement that $\CU_n^{\mathrm{S}}$ contains the shortest path structure to the origin in $\CB^{L, \mathrm{S}}_{\tau_{n}^{\mathrm{P}}}(0)$. 
 This finishes the induction. \end{proof}

\bibliographystyle{abbrv}
\bibliography{refscompetition}

\end{document}